\theoremstyle{definition}
\newtheorem{definition}{Definition}[section]
\newtheorem{ex}[definition]{Example}
\newtheorem{rem}[definition]{Remark}
\theoremstyle{plain}
\newtheorem{prop}[definition]{Proposition}
\newtheorem{lem}[definition]{Lemma}
\newtheorem{coro}[definition]{Corollary}
\newtheorem{teo}[definition]{Theorem}
\newfont{\bbb}{msbm10 scaled\magstephalf}     
\title{The geometry of corank $1$ surfaces in $\mathbb{R}^{4}$}
\author{P. Benedini Riul, R. Oset Sinha, M. A. S. Ruas}
\date{}
\address{Instituto de Ci\^encias Matem\'aticas e de Computa\c{c}\~ao - USP,
Av. Trabalhador s\~ao-carlense, 400 - Centro,
CEP: 13566-590 - S\~ao Carlos - SP, Brazil}
\email{benedini@usp.br, maasruas@icmc.usp.br}
\address{Departament de Matem\`{a}tiques,
Universitat de Val\`encia, Campus de Burjassot, 46100 Burjassot,
Spain}
\email{raul.oset@uv.es}
\thanks{Work of P. Benedini Riul supported by CAPES - PVE  88887.122685/2016-00}
\thanks{Work of R. Oset Sinha partially supported by DGICYT Grant MTM2015--64013--P}
\thanks{Work of M. A. S. Ruas partially supported by CNPq Grant 306306/2015--8 and FAPESP Grant 2014/00304--2}
\subjclass[2000]{Primary 57R45; Secondary 58K05, 53A05} \keywords{singular surface in 4-space, curvature
parabola, second fundamental form, asymptotic directions, umbilic curvature}
\begin{document}

\begin{abstract}
We study the geometry of surfaces in $\mathbb{R}^{4}$ with corank $1$ singularities. For such surfaces the singularities are isolated and at each point we define the curvature parabola in the normal space. This curve codifies all the second order information of the surface. Also, using this curve we define asymptotic and binormal directions, the umbilic curvature and study the flat geometry of the surface. It is shown that we can associate to this singular surface a regular one in $\mathbb{R}^{4}$ and relate their geometry.
\end{abstract}

\maketitle

\section{Introduction}

The differential geometry of singular surfaces in $\mathbb{R}^{3}$
has been an object of interest in the past decades. Singularity
theory has performed a massive contribution to the development of
this area. The geometry of the cross-cap (or Whitney umbrella), for
instance, has been studied by many authors:
\cite{BruceWest,DiasTari,FukuiHasegawa,HHNUY,HHNSUY,BallesterosTari,OsetSinhaTari}.
Also, the cuspidal edge, the most simple type of wave front, appears
in many papers in which significant advances have been achieved:
\cite{KRSUY,MartinsSaji,MartinsSaji2,NUY,OsetSinhaTari2,SUY}.

In \cite{MartinsBallesteros}, the authors studied in depth the
geometry of surfaces in $\mathbb{R}^{3}$ with corank $1$
singularities. Inspired by the definition of the curvature ellipse
for regular surfaces in $\mathbb{R}^{4}$ (\cite{Little}), they defined
the curvature parabola, a plane curve that may degenerate into a
half-line, a line or even a point and whose trace lies in the normal
plane of the surface. This special curve carries all the second
order information of the surface at the singular point. The second
fundamental form is defined and later used to present the concepts
of asymptotic and binormal directions. An invariant called umbilic
curvature (invariant under the action of
$\mathcal{R}^{2}\times\mathcal{O}(3)$, the subgroup of $2$-jets of
diffeomorphisms in the source and linear isometries in the target)
is defined at points for which the curvature parabola degenerates.
Finally, all those constructions are used to obtain results
regarding the contact of the surface with planes and spheres.

When projecting orthogonally an immersed surface $S\subset\mathbb{R}^4$ to $\mathbb{R}^3$, the composition of the parametrisation of $S$ with the projection can be seen locally as a map germ $(\mathbb R^2,0)\rightarrow (\mathbb R^3,0)$. Furthermore, if the direction of projection is tangent to the surface, singularities appear. In particular, if the direction is asymptotic, singularities more degenerate than a cross-cap (or Whitney umbrella) appear (\cite{BruceNogueira}).

In \cite{Benedini/Sinha}, the first two authors relate the geometry of the surface in $\mathbb{R}^4$ to the geometry of the singular projected surface in $ \mathbb{R}^3$. In particular, they relate the curvature
ellipse of the immersed surface $S$ to the curvature parabola of its singular projection.

Similarly, singular surfaces in $\mathbb{R}^{4}$ appear naturally as
projections of regular surfaces in $\mathbb{R}^{5}$ along tangent
directions. They can also be taken as the image of a smooth map
(that may have singularities)
$f:U\subset\mathbb{R}^{2}\rightarrow\mathbb{R}^{4}$, where $U$ is an
open subset. Two surfaces parametrised by $\mathcal{A}$-equivalent
maps (two map germs,
$f,g:(\mathbb{R}^{2},q)\rightarrow(\mathbb{R}^{4},p)$ are
$\mathcal{A}$-equivalent if there are germs of diffeomorphisms
$\phi$ and $\psi$ of the source and target, respectively, such that
$g=\psi\circ f\circ \phi^{-1}$) have diffeomorphic images but may
not have the same local differential geometry. In \cite{Rieger}, the
authors give a classifications of all $\mathcal{A}$-simple map germs
$f:(\mathbb{R}^{2},0)\rightarrow(\mathbb{R}^{4},0)$. Since we are
interested in the geometry of the image of such map germs, we search
for a finer equivalence relation, one that preserves the geometric
properties of the surface.

In this paper, we present a study for surfaces in $\mathbb{R}^{4}$
with corank $1$ singularities. Section 2 is an overview of the
differential geometry of regular surfaces in $\mathbb{R}^{n}$,
$n=4,5$, for such surfaces will perform an important role. Let
$M\subset\mathbb{R}^{4}$ be a singular surface with a corank $1$
singularity at $p$. It will be considered as the image of a smooth
map $g:\tilde{M}\rightarrow\mathbb{R}^{4}$ with a singularity of
corank $1$ at $q$ and defined on a regular surface $\tilde{M}$, such
that $g(q)=p\in M$.

Section 3 is dedicated to the study of the \emph{curvature parabola} (see Definition \ref{curvatureparabola}). To do so, we define the first and second fundamental forms of $M$ at $p$. The curvature parabola, denoted by $\Delta_{p}$ is a plane curve whose trace may degenerate into a half-line, a line or a point. In \cite{Mendes}, the authors show that there are four orbits on the set of all corank $1$ map germs $f:(\mathbb{R}^{2},0)\rightarrow(\mathbb{R}^{4},0)$ according to their $2$-jets under the action of $\mathcal{A}^{2}$, which is the space of $2$-jets of diffeomorphisms in the source and target (see Lemma \ref{lema2jato}). The curvature parabola is a complete invariant for this classification, in the sense that it distinguishes completely the four orbits (see Theorem \ref{teo.2jato/parabola}). The main result of this section is Theorem \ref{mainteo}, which shows that two corank $1$ $2$-jets $(\mathbb{R}^{2},0)\rightarrow(\mathbb{R}^{4},0)$ are equivalent under the action of $\mathcal{R}^{2}\times\mathcal{O}(4)$ if and only if there is an isometry between the normal hyperplanes that preserves the respective curvature parabolas.

The last section presents the second order properties of corank $1$
surfaces in $\mathbb{R}^{4}$. The purpose of this section is to
prove a  geometrical relation between the singular surface and a
regular surface $S\subset\mathbb{R}^{4}$ associated to the singular
one (Theorem \ref{teorelation}). In order to find this relation, we
also need to associate to our singular surface a regular surface
$N\subset\mathbb{R}^{5}$. The concepts of \emph{asymptotic} and
\emph{binormal directions} are defined and we prove some results
concerning them. Our singular surface in $\mathbb{R}^{4}$ is also
related to the associated regular surface $N\subset\mathbb{R}^{5}$:
they have the same second fundamental form. However, the notions of
asymptotic and binormal directions are different.

Finally, a second order invariant called the \emph{umbilic
curvature} is defined. One can find in the literature invariants
called umbilic curvature, for instance, in
\cite{Costa/Fuster/Moraes,MartinsBallesteros,MartinsSaji}. Our
invariant can be seen as a generalization of the previous ones for
the case of singular surfaces in $\mathbb R^4$. This invariant will
be an important tool to develop the study of the flat geometry of
corank $1$ surfaces in $\mathbb{R}^{4}$, the subject of the last
subsection in this paper.


\section{The geometry of regular surfaces}\label{regsurfaces}

In this section we present some aspects of regular surfaces in $\mathbb{R}^{n}$ for $n=4,5$. For more details, see \cite{Livro}.

\subsection{Regular surfaces in $\mathbb{R}^{4}$}\label{supregularesR4}
Little, in \cite{Little}, studied the second order geometry of submanifolds immersed in Euclidean spaces, in particular of immersed surfaces in $\mathbb{R}^{4}$. He defined the second fundamental form and the locus of curvature of such surfaces: the curvature ellipse, as we shall see.  This paper has inspired
a lot of research on the subject (see \cite{BruceNogueira,BruceTari,GarciaMochidaFusterRuas,MochidaFusterRuas,MochidaFusterRuas2,BallesterosTari,OsetSinhaTari,RomeroFuster}, amongst others). Given a smooth surface $S\subset\mathbb{R}^{4}$ and $f:U\rightarrow\mathbb{R}^{4}$
a local parametrisation of $S$ with $U\subset\mathbb{R}^{2}$ an open subset, let
$\{\textbf{e}_{1},\textbf{e}_{2},\textbf{e}_{3},\textbf{e}_{4}\}$ be an orthonormal frame of $\mathbb{R}^{4}$ such that at any $u\in U$,
$\{\textbf{e}_{1}(u),\textbf{e}_{2}(u)\}$ is a basis for $T_{p}S$ and $\{\textbf{e}_{3}(u),\textbf{e}_{4}(u)\}$ is a basis for
$N_{p}S$ at $p=f(u)$.
The second fundamental form of $S$ at $p$ is the vector valued quadratic form
$II_{p}:T_{p}S\rightarrow N_{p}S$ given by
$$II_{p}(\textbf{w})=(l_{1}w_{1}^{2}+2m_{1}w_{1}w_{2}+n_{1}w_{2}^{2})\textbf{e}_{3}+(l_{2}w_{1}^{2}+2m_{2}w_{1}w_{2}+n_{2}w_{2}^{2})\textbf{e}_{4},$$
where $l_{i}=\langle f_{xx},\textbf{e}_{i+2}\rangle,\ m_{i}=\langle f_{xy},\textbf{e}_{i+2}\rangle$
and $n_{i}=\langle f_{yy},\textbf{e}_{i+2}\rangle$ for $i=1,2$ are
called the coefficients of the second fundamental form with respect to the
frame above and $\textbf{w}=w_{1}\textbf{e}_{1}+w_{2}\textbf{e}_{2}\in T_{p}S$. The matrix of the second fundamental
form with respect to the orthonormal frame above is given by
$$
\alpha=\left(
         \begin{array}{ccc}
           l_{1} & m_{1} & n_{1} \\
           l_{2} & m_{2} & n_{2} \\
         \end{array}
       \right).
$$

Consider a point $p\in S$ and the unit circle $\mathbb{S}^{1}$ in $T_{p}S$ parametrised by
$\theta\in [0,2\pi]$. The curvature vectors $\eta(\theta)$ of the normal sections of
$S$ by the hyperplane $\langle\theta\rangle\oplus N_{p}S$ form an ellipse in the
normal plane $N_{p}S$, called the \emph{curvature ellipse} of $S$ at $p$, that can also
be seen as the image of the map
$\eta:\mathbb{S}^{1}\subset T_{p}S\rightarrow N_{p}S$, where
\begin{equation}
\eta(\theta)=\sum_{i=1}^{2}(l_{i}\cos^{2}\theta+2m_{i}\cos\theta\sin\theta+n_{i}\sin^{2}\theta)\textbf{e}_{i+2}.
\end{equation}\label{ellipse}
Note that if we write $\textbf{u}=\cos\theta\textbf{e}_{1}+\sin\theta\textbf{e}_{2}\in \mathbb{S}^{1}$,
$II_{p}(\textbf{u})=\eta(\theta)$.

The classification of points is made using the curvature ellipse.

\begin{definition}
A point $p\in S$ is called \emph{semiumbilic} if the curvature ellipse is a line segment which
does not contain $p$. If the curvature ellipse is a radial segment, the point $p$ is called an \emph{inflection} point. An inflection point is of \emph{real type}, (resp. \emph{imaginary type}, \emph{flat}) if $p$ is an interior point of the
radial segment, (resp. does not belong to it, is one of its end points).
When the curvature ellipse reduces to a point, $p$ is called \emph{umbilic}. Moreover,
if the point is $p$ itself, then $p$ is said to be a
\emph{flat umbilic}. A non inflection point $p\in S$ is called \emph{elliptic} (resp. \emph{hyperbolic},
\emph{parabolic}) when it lies inside (resp. outside, on) the curvature ellipse.
\end{definition}

A tangent direction $\theta$ at $p\in S$ is called an \emph{asymptotic direction}
at $p$ if $\eta(\theta)$ and $\frac{d\eta}{d\theta}(\theta)$ are linearly dependent vectors in
$N_{p}S$, where $\eta(\theta)$ is a parametrisation of the curvature ellipse as in (\ref{ellipse}). A curve on $S$ whose tangent at each point is an asymptotic
direction is called an asymptotic curve.

\begin{lem}\cite{Livro}
Let $f:U\rightarrow\mathbb{R}^{4}$ be a local parametrisation of a surface $S$
and denote by $l_{1},\ m_{1},\ n_{1},\ l_{2},\ m_{2},\ n_{2}$ the coefficients
of its second fundamental form with respect to any frame
$\{f_{x},f_{y},\textbf{f}_{3},\textbf{f}_{4}\}$ of $T_{p}S\times N_{p}S$
which depends smoothly on $p=f(x,y)$. Then the asymptotic curves of $S$ are
are the solutions curves of the binary differential equation:
\begin{equation}\label{BDE}
(l_{1}m_{2}-l_{2}m_{1})dx^{2}+(l_{1}n_{2}-l_{2}n_{1})dxdy+(m_{1}n_{2}-m_{2}n_{1})dy^{2}=0,
\end{equation}
which can also be written as the following determinant form:
$$
\left|
  \begin{array}{ccc}
    dy^{2} & -dxdy & dx^{2} \\
    l_{1} & m_{1} & n_{1} \\
    l_{2} & m_{2} & n_{2} \\
  \end{array}
\right|=0.
$$
\end{lem}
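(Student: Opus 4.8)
The plan is to prove the formula first for an adapted orthonormal frame $\{\mathbf e_1,\mathbf e_2,\mathbf e_3,\mathbf e_4\}$, the setting in which the curvature ellipse and its parametrisation $\eta(\theta)$ are defined, and then to deduce the general case by checking that the binary differential equation \eqref{BDE} transforms covariantly under a change of the tangent and of the normal frame, so that its solution curves do not depend on the chosen frame.

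Put $Q_i(x,y)=l_ix^2+2m_ixy+n_iy^2$ for $i=1,2$, so that for the unit vector $\mathbf u=\cos\theta\,\mathbf e_1+\sin\theta\,\mathbf e_2$ one has $\eta(\theta)=II_p(\mathbf u)=Q_1(\cos\theta,\sin\theta)\,\mathbf e_3+Q_2(\cos\theta,\sin\theta)\,\mathbf e_4$. By definition the tangent direction $(dx:dy)=(\cos\theta:\sin\theta)$ is asymptotic exactly when $\eta(\theta)$ and $\tfrac{d\eta}{d\theta}(\theta)$ are linearly dependent in $N_pS$, i.e.\ (writing $A_i(\theta)=Q_i(\cos\theta,\sin\theta)$) when $A_1A_2'-A_2A_1'=0$. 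First I would write $A_i'(\theta)=-\sin\theta\,\partial_xQ_i+\cos\theta\,\partial_yQ_i$ (all partials evaluated at $(\cos\theta,\sin\theta)$) and then use Euler's relation $x\,\partial_xQ_i+y\,\partial_yQ_i=2Q_i$ for the degree-$2$ forms $Q_i$ to collapse the bracket; a short computation gives $A_1A_2'-A_2A_1'=\tfrac12(\cos^2\theta+\sin^2\theta)\,\big(\partial_xQ_1\,\partial_yQ_2-\partial_xQ_2\,\partial_yQ_1\big)$, which on the unit circle equals $\tfrac12$ times the Jacobian determinant of $(Q_1,Q_2)$. Since $\partial_xQ_1\,\partial_yQ_2-\partial_xQ_2\,\partial_yQ_1=4\big[(l_1m_2-l_2m_1)x^2+(l_1n_2-l_2n_1)xy+(m_1n_2-m_2n_1)y^2\big]$, the asymptotic condition at $(dx:dy)$ is precisely equation \eqref{BDE}; expanding the $3\times 3$ determinant along its first row yields the same quadratic form, which gives the determinantal version.

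It then remains to drop the assumption that the frame is the adapted orthonormal one. A change of the normal frame from $\{\mathbf e_3,\mathbf e_4\}$ to $\{\mathbf f_3,\mathbf f_4\}$ replaces the coefficient matrix $\alpha$ by $B\alpha$ for a smoothly varying $B\in GL(2,\mathbb R)$; this substitutes for the rows $(l_i,m_i,n_i)$, $i=1,2$, their $B$-combinations, so that the $3\times 3$ determinant is multiplied by $\det B\neq 0$ and its vanishing locus is unchanged. A change from the tangent frame $\{\mathbf e_1,\mathbf e_2\}$ to $\{f_x,f_y\}$ amounts to a smoothly varying invertible linear substitution applied simultaneously to $(dx,dy)$ and, through its second symmetric power, to the pair of rows of $\alpha$; I would check that this is exactly the covariance that preserves the zero set of \eqref{BDE} as a field of directions, hence preserves the family of integral (asymptotic) curves. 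Together these two steps show that \eqref{BDE} computed in any smooth frame has the same solution curves as the one computed in the adapted orthonormal frame.

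I do not expect a serious obstacle in the main computation, which is completely routine once Euler's relation is used. The one point demanding care is this last reduction — verifying that \eqref{BDE} is genuinely frame-covariant in both the tangent and the normal directions, so that taking \eqref{BDE} in an arbitrary smooth frame defines the same asymptotic curves as the intrinsic definition through the curvature ellipse. The degenerate cases in which $\eta(\theta)$ or $\tfrac{d\eta}{d\theta}(\theta)$ vanishes require no separate argument, since ``linearly dependent'' already subsumes them.
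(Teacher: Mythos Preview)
Your argument is correct. The computation using Euler's relation is clean and the frame-covariance step is handled appropriately: the normal-frame change multiplies the determinant by a nonzero scalar, and the tangent-frame change acts as a linear substitution on $(dx,dy)$ together with the corresponding symmetric-square action on the rows, so the zero locus of the quadratic form in $(dx:dy)$ is preserved.

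However, there is nothing to compare against: the paper does not give its own proof of this lemma. It is stated with a citation to \cite{Livro} (Izumiya--Romero Fuster--Ruas--Tari) and used as background material in the preliminary Section~\ref{supregularesR4}; no proof environment follows it. So your write-up stands on its own rather than as a variant of anything in the paper.
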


In order to obtain geometrical information of a regular surface $S\subset\mathbb{R}^{4}$, one can study the generic contacts of the surface with hyperplanes. Such contact is measured
by the singularities of the height function of $S$. Let $f:U\rightarrow\mathbb{R}^{4}$ be a local parametrisation of $S$. The family of height functions $H:U\times \mathbb{S}^{3}\rightarrow\mathbb{R}$
is given by $H(u,v)=\langle f(u),v\rangle$.
For $v$ fixed, the height function $h_{v}$ of $S$ is given by $h_{v}(u)=H(u,v)$.
A point $p=f(u)$ is a singular point of $h_{v}$ if and only if $v$ is a normal vector to the surface $S$ at $p$.
A hyperplane orthogonal to the direction $v$ is an \emph{osculating
hyperplane} of $M$ at $p = f(u)$ if it is tangent to $S$ at $p$ and $h_{v}$ has a
degenerate (i.e., non Morse) singularity at $u$. In such case we call the direction $v$ a
\emph{binormal} direction of $S$ at $p$.


\subsection{Surfaces in $\mathbb{R}^{5}$} The definitions of the second fundamental form and the curvature ellipse are very simillar to the ones of regular surfaces in $\mathbb{R}^{4}$. For more details, see \cite{Costa/Fuster/Moraes,Livro,Mochida/Fuster/Ruas,Moraes/Fuster,Fuster/Ruas/Tari}.
Let $N$ be a regular surface in $\mathbb{R}^{5}$ and $f:U\subset\mathbb{R}^{2}\rightarrow\mathbb{R}^{5}$ a local parametrisation of $N$. The positively oriented orthonormal frame $\{\textbf{e}_{1},\textbf{e}_{2},\textbf{e}_{3},\textbf{e}_{4},\textbf{e}_{5}\}$ of $\mathbb{R}^{5}$ satisfies that for every $u\in U$, $\{\textbf{e}_{1}(u),\textbf{e}_{2}(u)\}$ is a basis for the tangent plane $T_{p}N$ and $\{\textbf{e}_{3}(u),\textbf{e}_{4}(u),\textbf{e}_{5}(u)\}$ is a basis for the normal hyperplane $N_{p}N$ at $p=f(u)$. The \emph{second fundamental form} associated to the embedding can be represented by
$$
\alpha_{p}=\left(
\begin{array}{ccc}
    a_{1} & b_{1} & c_{1} \\
    a_{2} & b_{2} & c_{2} \\
    a_{3} & b_{3} & c_{3}
\end{array}
\right),$$

\noindent where $a_{i}=\langle f_{xx},\textbf{e}_{i+2}\rangle$, $b_{i}=\langle f_{xy},\textbf{e}_{i+2}\rangle$ and $c_{i}=\langle f_{yy},\textbf{e}_{i+2}\rangle$, $i=1,2,3$.
The \emph{curvature ellipse} at a point $p$ of the surface $N$ is the image of the map $\eta:\mathbb{S}^{1}\rightarrow N_{p}N$, obtained by assigning to each tangent direction $\theta$ the curvature vector of the normal section $\gamma_{\theta}$.

We also define the subsets
$$N_{i}=\{p\in N|\ \mbox{rank}(\alpha_{p})=i\},\ i\leqslant3.$$
Mochida, Romero Fuster and Ruas in \cite{Mochida/Fuster/Ruas} showed that given a closed surface $N$, there exists a residual set $\mathcal{O}$ in $Emb(N,\mathbb{R}^{5})$ such that for any $f\in\mathcal{O}$, $N=N_{3}\cup N_{2}$. Moreover, it is shown that $N_{3}$ is an open subset of $N$ and $N_{2}$ is a regularly embedded curve.
The affine subspace and the vector subspace determined by the curvature ellipse in $N_{p}N$ are denoted, respectively, by $\mathcal{A}ff_{p}$ and $E_{p}$.

Given $N\subset\mathbb{R}^{5}$ locally given by $f$, the family of height functions on $N$ is
$$\begin{array}{ccc}
     H:N\times\mathbb{S}^{4} &\rightarrow & \mathbb{R}\times\mathbb{S}^{4}  \\
     (p,v)& \mapsto & (h_{v}(p),v)
\end{array}
$$
where $h_{v}(p)=\langle f(u),v\rangle$.
There is an open and dense set $\mathcal{S}_{H}$ in $Imm(U,\mathbb{R}^{5})$ such that for any $f\in\mathcal{S}_{H}$, the height function $h_{v}$, $v\in\mathbb{S}^{4}$, of the surface $N=f(U)$ has only singularities of type $A_{\leq5}$, $D^{\pm}_{4}$ or $D_{5}$ and they all are $\mathcal{R}$-versally unfolded by the family $H$.

A vector $v\in N_{p}N$ will be called a \emph{degenerate direction} for $N$ provided that $p$ is a non Morse singularity of $h_{v}$. In such case, $\ker(\mathcal{H}(h_{v})(p))\neq\{0\}$ and any direction $\textbf{u}\in\ker(\mathcal{H}(h_{v})(p))$ is called \emph{flat contact direction associated to} $v$, where $\mathcal{H}(h_{v})(p)$ denotes the Hessian matrix of $h_{v}$ at $p$.

A degenerate direction $v\in N_{p}N$ for which $h_{v}$ has a singularity $A_{3}$ or worse is called a \emph{binormal direction} at $p$ and the corresponding contact directions are called \emph{asymptotic directions} on $N$. The subset of degenerate directions in $N_{p}N$ is a cone that may degenerate. It is shown in \cite{Mochida/Fuster/Ruas} that at a point of type $M_{3}$ there is at least one and at most five binormal directions.


\section{The curvature parabola}\label{section-notation}

In Subsection \ref{supregularesR4} we presented a brief study of regular surfaces in $\mathbb{R}^{4}$.
However, to study singular surfaces in $\mathbb{R}^{4}$ we face some problems: How do we define the tangent and the normal spaces? How do we get the second order geometry of such surface? In order to answer these questions we shall need the following construction. This construction and the results that follow were inspired by \cite{MartinsBallesteros}.

Let $M$ be a corank $1$ surface in $\mathbb{R}^{4}$ at $p$. We will take $M$ as the image of a smooth map $g:\tilde{M}\rightarrow \mathbb{R}^{4}$, where $\tilde{M}$ is a smooth regular surface and $q\in\tilde{M}$ is a corank $1$ point of $g$ such that $g(q)=p$. Also, we consider $\phi:U\rightarrow\mathbb{R}^{2}$ a local coordinate system defined in an open neighbourhood $U$ of $q$ at $\tilde{M}$, and by doing this we may consider a local parametrisation $f=g\circ\phi^{-1}$ of $M$ at $p$ (see the diagram below).

$$
\xymatrix{
\mathbb{R}^{2}\ar@/_0.7cm/[rr]^-{f} & U\subset\tilde{M}\ar[r]^-{g}\ar[l]_-{\phi} & M\subset\mathbb{R}^{4}
}
$$


The previous construction is a formality to ensure that the surface $M$ is a corank $1$ surface at $p$. Besides, it helps us to answer the first question of how to define the tangent space of $M$ at $p$. The \emph{tangent line} of $M$ at $p$, $T_{p}M$, is given by $\mbox{Im}\ dg_{q}$, where $dg_{q}:T_{q}\tilde{M}\rightarrow T_{p}\mathbb{R}^{4}$ is the differential map of $g$ at $q$. Hence, the \emph{normal hyperplane} of $M$ at $p$, $N_{p}M$, is the subspace satisfying $T_{p}M\oplus N_{p}M=T_{p}\mathbb{R}^{4}$.

Even though the first question has been answered, the fundamental forms will not be defined on $T_{p}M$, since it is a line. Here, our construction shall be convenient once more.
First, consider the orthogonal projection $\perp:T_{p}\mathbb{R}^{4}\rightarrow N_{p}M$, $w\mapsto w^{\perp}$.
The \emph{first fundamental form} of $M$ at $p$, $I:T_{q}\tilde{M}\times T_{q}\tilde{M}\rightarrow \mathbb{R}$ is given by
$$I(\textbf{u},\textbf{v})=\langle dg_{q}(\textbf{u}),dg_{q}(\textbf{v})\rangle,\ \ \ \ \forall\ \textbf{u},\textbf{v}\in T_{q}\tilde{M}.$$
Since the map $g$ has corank $1$ at $q\in T_{q}\tilde{M}$, the first fundamental form is not a Riemannian metric on $T_{q}\tilde{M}$, but a pseudometric. Considering the local parametrisation of $M$ at $p$, $f=g\circ\phi^{-1}$ and the basis $\{\partial_{x},\partial_{y}\}$ of $T_{q}\tilde{M}$, the coefficients of the first fundamental form with respect to $\phi$ are:
$$\begin{array}{c}
     E(q)=I(\partial_{x},\partial_{x})=\langle f_{x},f_{x}\rangle(\phi(q)),\   F(q)=I(\partial_{x},\partial_{y})=\langle f_{x},f_{y}\rangle(\phi(q)), \\
      G(q)=I(\partial_{y},\partial_{y})=\langle f_{y},f_{y}\rangle(\phi(q)).
\end{array}$$
Taking $\textbf{u}=\alpha\partial_{x}+\beta\partial_{y}=(\alpha,\beta)\in T_{q}\tilde{M}$, we write $I(\textbf{u},\textbf{u})=\alpha^{2}E(q)+2\alpha\beta F(q)+\beta^{2}G(q)$.

With the same conditions as above, the \emph{second fundamental form} of $M$ at $p$, $II:T_{q}\tilde{M}\times T_{q}\tilde{M}\rightarrow N_{p}M$ in the basis $\{\partial_{x},\partial_{y}\}$ of $T_{q}\tilde{M}$ is given by
$$
\begin{array}{c}
     II(\partial_{x},\partial_{x})=f_{xx}^{\perp}(\phi(q)),\  II(\partial_{x},\partial_{y})=f_{xy}^{\perp}(\phi(q)),\
      II(\partial_{y},\partial_{y})=f_{yy}^{\perp}(\phi(q))
\end{array}
$$
and we extend it to the whole space in a unique way as a symmetric bilinear map.

\begin{lem}\label{lemma-2ff}
The definition of the second fundamental form does not depend on the choice of local coordinates on $\tilde{M}$.
\end{lem}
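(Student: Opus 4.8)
The plan is to compare the expressions produced by two different charts via the chain rule and to observe that the corank $1$ hypothesis is precisely what makes the obstructing terms disappear after applying $\perp$. So let $\phi,\tilde{\phi}\colon U\to\mathbb{R}^{2}$ be two local coordinate systems about $q$, put $f=g\circ\phi^{-1}$ and $\tilde{f}=g\circ\tilde{\phi}^{-1}$, and let $h=\tilde{\phi}\circ\phi^{-1}$ be the change of coordinates, so that $f=\tilde{f}\circ h$ on a neighbourhood of $\phi(q)$; write $h=(h^{1},h^{2})$, with $(x,y)$ and $(u,v)$ the source and target variables of $h$. The point is that the line $T_{p}M=\operatorname{Im}dg_{q}$, the normal hyperplane $N_{p}M=(T_{p}M)^{\perp}$ and the orthogonal projection $\perp\colon T_{p}\mathbb{R}^{4}\to N_{p}M$ do not depend on the chosen chart, so it suffices to show that $f^{\perp}_{xx},f^{\perp}_{xy},f^{\perp}_{yy}$ at $\phi(q)$ and $\tilde{f}^{\perp}_{uu},\tilde{f}^{\perp}_{uv},\tilde{f}^{\perp}_{vv}$ at $\tilde{\phi}(q)$ determine the same symmetric bilinear map $T_{q}\tilde{M}\times T_{q}\tilde{M}\to N_{p}M$.

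Differentiating $f=\tilde{f}\circ h$ twice gives
$$f_{xx}=(h^{1}_{x})^{2}(\tilde{f}_{uu}\circ h)+2h^{1}_{x}h^{2}_{x}(\tilde{f}_{uv}\circ h)+(h^{2}_{x})^{2}(\tilde{f}_{vv}\circ h)+h^{1}_{xx}(\tilde{f}_{u}\circ h)+h^{2}_{xx}(\tilde{f}_{v}\circ h),$$
and analogous formulas for $f_{xy}$ and $f_{yy}$: in each case a quadratic form in the first partials of $h$ applied to the second partials of $\tilde{f}$, plus a linear combination of $\tilde{f}_{u}\circ h$ and $\tilde{f}_{v}\circ h$ whose coefficients are second partials of $h$. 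Evaluating at $\phi(q)$ and using that $g$ has corank $1$ at $q$, the vectors $\tilde{f}_{u}(\tilde{\phi}(q))$ and $\tilde{f}_{v}(\tilde{\phi}(q))$ both lie in $\operatorname{Im}dg_{q}=T_{p}M$, hence are killed by $\perp$. Thus $\perp$ annihilates all the first-order terms and leaves
$$f^{\perp}_{xx}(\phi(q))=(h^{1}_{x})^{2}\tilde{f}^{\perp}_{uu}+2h^{1}_{x}h^{2}_{x}\tilde{f}^{\perp}_{uv}+(h^{2}_{x})^{2}\tilde{f}^{\perp}_{vv},\qquad f^{\perp}_{yy}(\phi(q))=(h^{1}_{y})^{2}\tilde{f}^{\perp}_{uu}+2h^{1}_{y}h^{2}_{y}\tilde{f}^{\perp}_{uv}+(h^{2}_{y})^{2}\tilde{f}^{\perp}_{vv},$$
$$f^{\perp}_{xy}(\phi(q))=h^{1}_{x}h^{1}_{y}\tilde{f}^{\perp}_{uu}+(h^{1}_{x}h^{2}_{y}+h^{2}_{x}h^{1}_{y})\tilde{f}^{\perp}_{uv}+h^{2}_{x}h^{2}_{y}\tilde{f}^{\perp}_{vv},$$
where the partials of $h$ are evaluated at $\phi(q)$ and those of $\tilde{f}$ at $\tilde{\phi}(q)$.

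To conclude, I would read these as the transformation law of a symmetric bilinear map. The charts $\phi$ and $\tilde{\phi}$ give the coordinate bases $\{\partial_{x},\partial_{y}\}$ and $\{\partial_{u},\partial_{v}\}$ of the intrinsic space $T_{q}\tilde{M}$, related by $\partial_{x}=h^{1}_{x}\partial_{u}+h^{2}_{x}\partial_{v}$ and $\partial_{y}=h^{1}_{y}\partial_{u}+h^{2}_{y}\partial_{v}$ (the Jacobian of $h$ at $\phi(q)$). Writing $II^{\phi}$ and $II^{\tilde{\phi}}$ for the second fundamental forms built from the two charts, the identities above say
$$II^{\phi}(\partial_{x},\partial_{x})=(h^{1}_{x})^{2}II^{\tilde{\phi}}(\partial_{u},\partial_{u})+2h^{1}_{x}h^{2}_{x}II^{\tilde{\phi}}(\partial_{u},\partial_{v})+(h^{2}_{x})^{2}II^{\tilde{\phi}}(\partial_{v},\partial_{v})=II^{\tilde{\phi}}(\partial_{x},\partial_{x}),$$
the last equality by bilinearity and symmetry of $II^{\tilde{\phi}}$, and likewise $II^{\phi}(\partial_{x},\partial_{y})=II^{\tilde{\phi}}(\partial_{x},\partial_{y})$ and $II^{\phi}(\partial_{y},\partial_{y})=II^{\tilde{\phi}}(\partial_{y},\partial_{y})$. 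Since a symmetric bilinear map on a $2$-dimensional space is determined by its values on $(\partial_{x},\partial_{x})$, $(\partial_{x},\partial_{y})$ and $(\partial_{y},\partial_{y})$, we get $II^{\phi}=II^{\tilde{\phi}}$. The only essential ingredient — and the reason the lemma holds — is the vanishing of the first-order terms under $\perp$, which uses exactly the corank $1$ condition; the rest is the classical chain-rule bookkeeping, carried out with $\perp$ in place of the tangential projection used for regular surfaces.
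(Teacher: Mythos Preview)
Your argument is correct and is essentially the paper's own proof: compute the second partials of one local parametrisation in terms of the other via the chain rule, project to $N_{p}M$ so that the first-order terms $\tilde f_{u},\tilde f_{v}$ (which lie in $T_{p}M=\operatorname{Im}dg_{q}$) vanish, and recognise the resulting relations as the change-of-basis law for a symmetric bilinear form under the Jacobian of $h$. One small remark: the vanishing of $\tilde f_{u}^{\perp},\tilde f_{v}^{\perp}$ does not in fact use the corank~$1$ hypothesis---first partials of any parametrisation lie in $\operatorname{Im}dg_{q}$ by the chain rule---so your final comment slightly overstates the role of that assumption.
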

\begin{proof}
Let $\bar{\phi}:\bar{U}\subset\mathbb{R}^{2}\rightarrow\tilde{M}$ another local coordinate system of $\tilde{M}$ at $q\in U\cap\bar{U}$ with coordinates $(u,v)$ and denote by $\bar{f}=g\circ\bar{\phi}$ the corresponding local parametrisation of $M$ at $p$. The parcial derivatives of $\bar{f}$ are
$$\bar{f}_{u}=f_{x}x_{u}+f_{y}y_{u}\ \mbox{and}\ \bar{f}_{v}=f_{x}x_{v}+f_{y}y_{v},$$
hence, we have
$$\begin{array}{ccc}
     \bar{f}_{uu}=f_{x}x_{uu}+f_{y}y_{uu}+f_{xx}x_{u}^{2}+2f_{xy}x_{u}y_{u}+f_{yy}y_{u}^{2}; \\
     \bar{f}_{uv}=f_{x}x_{uv}+f_{y}y_{uv}+f_{xx}x_{u}x_{v}+f_{xy}(x_{u}y_{v}+x_{v}y_{u})+f_{yy}y_{u}y_{v}; \\
     \bar{f}_{vv}=f_{x}x_{vv}+f_{y}y_{vv}+f_{xx}x_{v}^{2}+2f_{xy}x_{v}y_{v}+f_{yy}y_{v}^{2}.
\end{array}$$

Projecting to $N_{p}M$, we obtain
$$\begin{array}{ccc}
     \bar{f}_{uu}^{\perp}=f_{xx}^{\perp}x_{u}^{2}+2f_{xy}^{\perp}x_{u}y_{u}+f_{yy}^{\perp}y_{u}^{2}; \\
     \bar{f}_{uv}^{\perp}=f_{xx}^{\perp}x_{u}x_{v}+f_{xy}^{\perp}(x_{u}y_{v}+x_{v}y_{u})+f_{yy}^{\perp}y_{u}y_{v}; \\
     \bar{f}_{vv}^{\perp}=f_{xx}^{\perp}x_{v}^{2}+2f_{xy}^{\perp}x_{v}y_{v}+f_{yy}^{\perp}y_{v}^{2}.
\end{array}$$
Since the projection is linear, the vectors $\bar{f}_{uu}^{\perp}$, $\bar{f}_{uv}^{\perp}$, $\bar{f}_{vv}^{\perp}$ and $f_{xx}^{\perp}$, $f_{xy}^{\perp}$, $f_{yy}^{\perp}$ are related by the equations of basis change in a symmetric bilinear map with respect to the matrix
$$\left(
\begin{array}{cc}
    x_{u} & x_{v} \\
    y_{u} & y_{v}
\end{array}
\right)$$
which is the matrix of basis change from $\{\partial_{x},\partial_{y}\}$ to $\{\partial_{u},\partial_{v}\}$ in $T_{q}\tilde{M}$. Therefore, both coordinates define the same second fundamental form.
\end{proof}

It is possible, for each normal vector $\nu\in N_{p}M$, to define the \emph{second fundamental form along $\nu$}, $II_{\nu}:T_{q}\tilde{M}\times T_{q}\tilde{M}\rightarrow\mathbb{R}$ given by $II_{\nu}(\textbf{u},\textbf{v})=\langle II(\textbf{u},\textbf{v}),\nu\rangle$, for all $\textbf{u},\textbf{v}\in T_{q}\tilde{M}$. The coefficients of $II_{\nu}$ with respect to the basis $\{\partial_{x},\partial_{y}\}$ of $T_{q}\tilde{M}$ are
$$
\begin{array}{cc}
     l_{\nu}(q)=\langle f_{xx}^{\perp},\nu\rangle(\phi(q)),\ m_{\nu}(q)=\langle f_{xy}^{\perp},\nu\rangle(\phi(q)),  \\
     n_{\nu}(q)=\langle f_{yy}^{\perp},\nu\rangle(\phi(q)).
\end{array}
$$
Given $\textbf{u}=\alpha\partial_{x}+\beta\partial_{y}\in T_{q}\tilde{M}$,
$$II_{\nu}(\textbf{u},\textbf{u})=\langle II(\textbf{u},\textbf{u}),\nu\rangle=\alpha^{2}l_{\nu}(q)+2\alpha\beta m_{\nu}(q)+\beta^{2}n_{\nu}(q).$$
Fixing an orthonormal frame $\{\nu_{1},\nu_{2},\nu_{3}\}$ of $N_{p}M$,
$$
\begin{array}{cl}\label{eq.2ff}
II(\textbf{u},\textbf{u}) & =II_{\nu_{1}}(\textbf{u},\textbf{u})\nu_{1}+II_{\nu_{2}}(\textbf{u},\textbf{u})\nu_{2}+II_{\nu_{3}}(\textbf{u},\textbf{u})\nu_{3} \\
        & =\displaystyle{\sum_{i=1}^{3}(\alpha^{2}l_{\nu_{i}}(q)+2\alpha\beta m_{\nu_{i}}(q)+\beta^{2}n_{\nu_{i}}(q))\nu_{i}},
\end{array}
$$
Moreover, the second fundamental form is represented by the matrix of coefficients
$$
\left(
  \begin{array}{ccc}
    l_{\nu_{1}} & m_{\nu_{1}} & n_{\nu_{1}} \\
    l_{\nu_{2}} & m_{\nu_{2}} & n_{\nu_{2}} \\
    l_{\nu_{3}} & m_{\nu_{3}} & n_{\nu_{3}} \\
  \end{array}
\right).
$$

\begin{definition}\label{curvatureparabola}
Let $C_{q}\subset T_{q}\tilde{M}$ be the subset of unit tangent vectors and let $\eta_{q}:C_{q}\rightarrow N_{p}M$ be the map given by $\eta_{q}(\textbf{u})=II(\textbf{u},\textbf{u})$. The \emph{curvature parabola} of $M$ at $p$, denoted by $\Delta_{p}$, is the image of $\eta_{q}$, that is, $\eta_{q}(C_{q})$.
\end{definition}

It follows from Lemma \ref{lemma-2ff} that the curvature parabola does not depend on the choice of local coordinates on the surface $\tilde{M}$. However, it depends on the map $g$ which parametrises $M$.

\begin{ex}
Consider $\tilde{M}=\mathbb{R}^{2}$ and the surface $M$ parametrised by $g(x,y)=(x,xy,y^{2},y^{2k+1})$, with $k\geqslant1$. Taking coordinates $(X,Y,Z,W)$ in $\mathbb{R}^{4}$, $q=(0,0)$ and $p=(0,0,0,0)$, the tangent line $T_{p}M$ is the $X$-axis and $N_{p}M$ is the $YZW$-hyperplane. The coefficients of the first fundamental form are given by $E(q)=1$ and $F(q)=G(q)=0$. Hence, if $\textbf{u}=(\alpha,\beta)\in T_{q}\mathbb{R}^{2}$, $I(\textbf{u},\textbf{u})=\alpha^{2}$ and $C_{q}=\{(\pm1,y):y\in\mathbb{R}\}$. The matrix of coefficients of the second fundamental form is
$$\left(
\begin{array}{ccc}
    0 & 1 & 0  \\
    0 & 0 & 2  \\
    0 & 0 & 0  \\
\end{array}
\right)
$$
when we consider the orthonormal frame $\{\textbf{e}_{1},\textbf{e}_{2},\textbf{e}_{3},\textbf{e}_{4}\}$. Therefore, for $\textbf{u}=(\alpha,\beta)$, $II(\textbf{u},\textbf{u})=(0,2\alpha\beta,2\beta^{2},0)$ and
the curvature parabola $\Delta_{p}$ is a non-degenerate parabola which can be parametrised by $\eta(y)=(0,2y,2y^{2},0)$.
\end{ex}

Let $g:(\mathbb{R}^{2},0)\rightarrow(\mathbb{R}^{4},0)$ be a corank $1$ map at $q$. It is possible to take a coordinate system $\phi$ and make rotations in the target in order to obtain
$$f(x,y)=g\circ\phi^{-1}(x,y)=(x,f_{2}(x,y),f_{3}(x,y),f_{4}(x,y)),$$
where $\frac{\partial f_{i}}{\partial x}(\phi(q))=\frac{\partial f_{i}}{\partial y}(\phi(q))=0$ for $i=2,3,4$. In this new parametrisation, we have $E(q)=1$, $F(q)=G(q)=0$ and if $\textbf{u}=(\alpha,\beta)$ is a tangent direction, $I(\textbf{u},\textbf{u})=\alpha^{2}$. Hence, $\textbf{u}\in C_{q}$ iff $\alpha=\pm1$ and $\beta\in\mathbb{R}$. Geometrically, $C_{q}$ is a pair of lines parallel to the $\partial_{y}$-axis in the tangent plane. Taking an orthonormal frame $\{\nu_{1},\nu_{2},\nu_{3}\}$ of $N_{p}M$, the curvature parabola $\Delta_{p}$ can be parametrised by
\begin{equation}\label{parabola}
    \eta(y)=\sum_{i=1}^{3}(l_{\nu_{i}}+2m_{\nu_{i}}y+n_{\nu_{i}}y^{2})\nu_{i}.
\end{equation}
Besides, $\Delta_{p}$ is a plane curve that may degenerate.

The set $J^{2}(2,4)$ denotes the space of $2$-jets $j^{2}f(0)$ of map germs $f:(\mathbb{R}^{2},0)\rightarrow(\mathbb{R}^{4},0)$ and $\Sigma^{1}J^{2}(2,4)$ will stand for the subset of corank $1$ $2$-jets. Also, $\mathcal{A}^{2}$ denotes the space of $2$-jets of diffeomorphisms in the source and target. It is shown in \cite{Mendes} that there are four $\mathcal{A}^{2}$ orbits in $\Sigma^{1}J^{2}(2,4)$. For completeness we give a sketch of the proof.

\begin{lem}[\cite{Mendes}]\label{lema2jato}
There exists four $\mathcal{A}^{2}$ orbits in $\Sigma^{1}J^{2}(2,4)$:
$$(x,xy,y^{2},0),\ (x,y^{2},0,0),\ (x,xy,0,0)\ \mbox{and}\ (x,0,0,0).$$
\end{lem}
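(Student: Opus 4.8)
The plan is to start from an arbitrary corank $1$ $2$-jet in $\Sigma^{1}J^{2}(2,4)$ and reduce it to one of the four normal forms by successive applications of coordinate changes in the source and target. After a linear change of coordinates in the source and a rotation in the target, any corank $1$ germ can be brought to the form $f(x,y)=(x,f_{2}(x,y),f_{3}(x,y),f_{4}(x,y))$ with each $f_{i}$ ($i=2,3,4$) having zero $1$-jet, exactly as in the discussion preceding the lemma. Hence at the level of $2$-jets we may write $j^{2}f(0)=(x,Q_{2},Q_{3},Q_{4})$ where each $Q_{i}=a_{i}x^{2}+b_{i}xy+c_{i}y^{2}$ is a binary quadratic form. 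The problem is thus reduced to classifying triples of quadratic forms $(Q_{2},Q_{3},Q_{4})$ up to the residual group action: source changes preserving the form (these are $2$-jets of diffeomorphisms $(x,y)\mapsto(x+\dots,y+\dots)$ fixing the first coordinate, which act on the $Q_i$ by substitution and by adding multiples of $x$ via the target) together with linear changes in the $(y_2,y_3,y_4)$-target coordinates, i.e. the natural $GL_3$ action on the span of $Q_2,Q_3,Q_4$ inside the $3$-dimensional space of binary quadratics.

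The key steps I would carry out are as follows. First, observe that modulo the target $GL_3$-action the relevant invariant is the dimension $r=\dim\langle Q_2,Q_3,Q_4\rangle\in\{0,1,2,3\}$ as a subspace of the $3$-dimensional space $\mathrm{Sym}^2(\mathbb{R}^2)$ of binary quadratic forms, and within each dimension the orbit is further refined by the nature of the forms (whether the generic member is a perfect square, a product of distinct linear factors, etc.). Second, handle the cases:
\begin{itemize}
\item $r=0$: all $Q_i=0$, giving $(x,0,0,0)$.
\item $r=1$: the span is generated by a single nonzero quadratic $Q$; up to $GL_2$ in the source $Q$ is equivalent to $x^2$, $xy$, $y^2$ or $x^2+y^2$, and one uses the source change together with the ability to absorb an $x^2$-term (since $x^2$ is the $2$-jet of the square of the first coordinate, it can be killed by a target translation $y_2\mapsto y_2-x^2$); this collapses everything to the normal form with a single $y^2$, namely $(x,y^2,0,0)$.
\item $r=2$: choose a basis $Q,Q'$ of the span; after a source change one may assume one of them is $y^2$ (if possible) or $xy$; a careful case analysis on the pencil spanned by $Q,Q'$ and repeated use of the $x^2$-absorption shows every such configuration is equivalent either to $\langle xy, y^2\rangle$, giving $(x,xy,y^2,0)$, or to a pencil whose only available independent directions reduce to $\langle xy \rangle$ plus something absorbable, giving $(x,xy,0,0)$ — one must check these two are genuinely inequivalent (they are, since one has a nondegenerate curvature parabola and the other a degenerate one).
\item $r=3$: the span is all of $\mathrm{Sym}^2(\mathbb{R}^2)$; since it contains $x^2$, which is absorbable, the effective span drops to a $2$-dimensional one and we land back in the $r=2$ analysis; the resulting normal form is $(x,xy,y^2,0)$.
\end{itemize}
Third, I would verify that the four listed $2$-jets are pairwise $\mathcal{A}^2$-inequivalent, most cleanly by invoking Theorem~\ref{teo.2jato/parabola} (the curvature parabola is a complete invariant): the four normal forms realise, respectively, a non-degenerate parabola, a half-line, a line, and a point, which are visibly distinct.

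The main obstacle I expect is the bookkeeping in the $r=2$ case: the group acting is not simply $GL_2\times GL_3$ but a semidirect-product-type action in which the quadratic part of the source diffeomorphism, combined with target translations by multiples of $x$, lets one modify the $Q_i$ in ways that are easy to overlook, so one must be systematic to be sure no further normal forms are hidden there and that $(x,xy,0,0)$ and $(x,xy,y^2,0)$ are the only survivors. A clean way to organise this is to first reduce using the $x^2$-absorption to assume no $Q_i$ involves $x^2$ (so each $Q_i\in\langle xy,y^2\rangle$, a $2$-dimensional space), at which point the residual action is the ordinary $GL_2$-action on pairs/triples in a plane and the orbit count becomes transparent. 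Since the statement is quoted from \cite{Mendes}, it is also legitimate simply to refer there for the full details and present only this reduction as the promised sketch.
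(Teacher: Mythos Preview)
Your strategy --- absorb the $x^2$-terms via target moves $Y_i\mapsto Y_i-c_iX^2$ and then classify the remaining quadratics, which now lie in $\langle xy,y^2\rangle$ --- is exactly the paper's approach; the paper simply writes it out as explicit coordinate changes, casing on whether one of the minors $\gamma_i=a_{11}b_{02}-a_{02}b_{11}$ (etc.) is nonzero, which is precisely your condition $r'=2$.

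That said, the case analysis you actually present, indexed by $r=\dim\langle Q_2,Q_3,Q_4\rangle$ \emph{before} absorption, is wrong as written. In your $r=1$ case you assert everything collapses to $(x,y^2,0,0)$; this fails when the single generator has no $y^2$-coefficient (e.g.\ $Q=xy$ yields $(x,xy,0,0)$, and $Q=x^2$ absorbs to $(x,0,0,0)$). Likewise your $r=2$ list omits $(x,y^2,0,0)$, which arises from spans such as $\langle x^2,y^2\rangle$. The fix is your own final paragraph: work with $r'=\dim\big(\langle Q_2,Q_3,Q_4\rangle\bmod\langle x^2\rangle\big)$ from the outset. One correction there too: the residual source action on $\langle xy,y^2\rangle$ is \emph{not} ``the ordinary $GL_2$-action'' but only its triangular subgroup, since admissible linear source changes must preserve the first coordinate; it is exactly this restriction that makes $\langle xy\rangle$ an invariant line and hence separates $(x,xy,0,0)$ from $(x,y^2,0,0)$ when $r'=1$. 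Finally, invoking Theorem~\ref{teo.2jato/parabola} for pairwise inequivalence is circular in the paper's logical order, since that theorem (via Remark~\ref{obs.coef.}) is deduced from the present lemma; instead compute the curvature parabola of each of the four normal forms directly and observe that they are of visibly different types.
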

\begin{proof}
Consider the $2$-jet
$$j^{2}f(0)=(x,a_{20}x^{2}+a_{11}xy+a_{02}y^{2},b_{20}x^{2}+b_{11}xy+b_{02}y^{2},c_{20}x^{2}+c_{11}xy+c_{02}y^{2}).$$
A change of coordinates in the target gives us
$$j^{2}f(0)\sim(x,a_{11}xy+a_{02}y^{2},b_{11}xy+b_{02}y^{2},c_{11}xy+c_{02}y^{2}).$$
Also, a change of coordinates in the source
leads us to
$$j^{2}f(0)\sim\left(x,\frac{a_{11}b_{02}-a_{02}b_{11}}{b_{02}}xy,\frac{a_{11}b_{02}-a_{02}b_{11}}{a_{11}}y^{2},c_{11}xy+c_{02}y^{2}\right).$$
The change of coordinates could have been made using any two of the last three components. The result would be similar, so would the conditions. Without loss of generality, we shall use the second and the third ones.
Hence, if $a_{11}b_{02}-a_{02}b_{11}\neq0$ (or $a_{11}c_{02}-a_{02}c_{11}\neq0$ or
$c_{11}b_{02}-c_{02}b_{11}\neq0$, in the other cases), then, using just changes in the target, we have
$j^{2}f(0)\sim(x,xy,y^{2},c_{11}xy+c_{02}y^{2})$. Finally, we have $j^{2}f(0)\sim(x,xy,y^{2},0)$.
On the other hand, if $a_{11}b_{02}-a_{02}b_{11}=0$, the coordinates $Y$ and $Z$ are linearly dependent, that means there are real numbers $A,B\neq0$ such that $AY+BZ=0$. Taking $\widetilde{X}=X,\ \widetilde{Y}=Y,\ \widetilde{Z}=AY+BZ\ \mbox{and}\ \widetilde{W}=W$
and making a rotation in the target, we get
$j^{2}f(0)\sim(x,a_{11}xy+a_{02}y^{2},\tilde{c}_{11}xy+\tilde{c}_{02}y^{2},0)$.
Continuing the analysis, if $a_{11}\tilde{c}_{02}-a_{02}\tilde{c}_{11}\neq0$,
with a change of coordinates in the target, as done before, $j^{2}f(0)\sim(x,xy,y^{2},0)$. However, if $a_{11}\tilde{c}_{02}-a_{02}\tilde{c}_{11}=0$, we may take $\tilde{c}_{02}=\tilde{c}_{11}=0$
and then, $j^{2}f(0)\sim(x,a_{11}xy+a_{02}y^{2},0,0)$. We have the following possibilities:
\begin{description}
\item[(i)] If $a_{02}\neq0$ and $a_{11}\in\mathbb{R}$, $j^{2}f(0)\sim(x,y^{2},0,0)$.
\item[(ii)] If  $a_{02}=0$ and $a_{11}\neq0$, it follows directly $j^{2}f(0)\sim(x,xy,0,0)$.
\item[(iii)] If $a_{02}=a_{11}=0$, $j^{2}f(0)\sim(x,0,0,0)$.
\end{description}
Thus, we have the four $\mathcal{A}^{2}$ orbits in $\Sigma^{1}J^{2}(2,4)$.
\end{proof}

\begin{rem}\label{obs.coef.}
Given a $2$-jet,
$$j^{2}f(0)=(x,a_{20}x^{2}+a_{11}xy+a_{02}y^{2},b_{20}x^{2}+b_{11}xy+b_{02}y^{2},c_{20}x^{2}+c_{11}xy+c_{02}y^{2}),$$
of a corank $1$ map germ, we give in  Table \ref{condicoes} a criterion over its coefficients in order to identify to which of the previous four orbits given by Lemma \ref{lema2jato} it belongs.
\begin{table}[h]
\caption{Conditions over the coefficients of the $2$-jet for the $\mathcal{A}^{2}$-classification}
\centering
{\begin{tabular}{ccc}
\hline
$\mathcal{A}^{2}$-normal form & Conditions\\
\hline
$(x,xy,y^{2},0)$ & $\gamma_{1}=a_{11}b_{02}-a_{02}b_{11}\neq0\ \mbox{or}\ \gamma_{2}=a_{11}c_{02}-a_{02}c_{11}\neq0$ \\
               & $\mbox{or}\ \gamma_{3}=c_{11}b_{02}-c_{02}b_{11}\neq0$\\ \cr
$(x,y^2,0,0)$ & $\gamma_{1}=\gamma_{2}=\gamma_{3}=0\ \mbox{and}\ a_{02}^{2}+b_{02}^{2}+c_{02}^{2}>0$ \\ \cr
$(x,xy,0,0)$ & $\gamma_{1}=\gamma_{2}=\gamma_{3}=0,\ a_{02}=b_{02}=c_{02}=0\ \mbox{and}\ a_{11}^{2}+b_{11}^{2}+c_{11}^{2}>0$\\ \cr
$(x,0,0,0)$ & $a_{11}=a_{02}=b_{11}=b_{02}=c_{11}=c_{02}=0.$\cr
\hline
\end{tabular}
}
\label{condicoes}
\end{table}

\end{rem}

The next result shows that the curvature parabola $\Delta_{p}$ is a complete invariant for the previous classification (Lemma \ref{lema2jato}), in the sense that it distinguishes the four $\mathcal{A}^{2}$-orbits.

\begin{teo}\label{teo.2jato/parabola}
Let $M\subset\mathbb{R}^{4}$ be a corank $1$ surface at $p$. We shall assume $p=(0,0,0,0)\in M$ and denote by $j^{2}f(0)$ the $2$-jet of a local parametrisation $f:(\mathbb{R}^{2},0)\rightarrow(\mathbb{R}^{4},0)$ of $M$. The following equivalences hold:
\begin{itemize}
    \item[(a)] $\Delta_{p}$ is a non-degenerate parabola iff $j^{2}f(0)\sim_{\mathcal{A}^{2}} (x,xy,y^{2},0)$;
    \item[(b)] $\Delta_{p}$ is a half-line iff $j^{2}f(0)\sim_{\mathcal{A}^{2}} (x,y^{2},0,0)$;
    \item[(c)] $\Delta_{p}$ is a line iff $j^{2}f(0)\sim_{\mathcal{A}^{2}} (x,xy,0,0)$;
    \item[(d)] $\Delta_{p}$ is a point iff $j^{2}f(0)\sim_{\mathcal{A}^{2}} (x,0,0,0)$.
\end{itemize}
\end{teo}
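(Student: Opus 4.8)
The plan is to reduce $f$ to a convenient parametrisation, to write $\Delta_p$ as the image of an explicit degree-$\le 2$ curve in the $3$-dimensional normal space $N_pM$, to classify by hand the shapes such a curve can take, and to match each shape with one of the four $\mathcal{A}^2$-orbits by means of the coefficient criteria collected in Table~\ref{condicoes}.

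First I would normalise. By Lemma~\ref{lemma-2ff} the curvature parabola is unchanged under a coordinate change in the source, and under a rotation in the target it changes only by an isometry of $N_pM$, hence not at all as a shape; so I may assume $f(x,y)=(x,f_2(x,y),f_3(x,y),f_4(x,y))$ with the $1$-jets of $f_2,f_3,f_4$ vanishing at the origin, and write
\[
j^2f(0)=\bigl(x,\ a_{20}x^2+a_{11}xy+a_{02}y^2,\ b_{20}x^2+b_{11}xy+b_{02}y^2,\ c_{20}x^2+c_{11}xy+c_{02}y^2\bigr).
\]
Then $T_pM$ is the $X$-axis and $N_pM$ is the $YZW$-hyperplane; taking the orthonormal frame $\{\mathbf e_2,\mathbf e_3,\mathbf e_4\}$, formula~(\ref{parabola}) shows (up to the harmless overall factor $2$) that $\Delta_p$ is the image of
\[
\eta(y)=P_0+yP_1+y^2P_2,\qquad P_0=(a_{20},b_{20},c_{20}),\ \ P_1=(a_{11},b_{11},c_{11}),\ \ P_2=(a_{02},b_{02},c_{02}),
\]
viewed inside $N_pM\cong\mathbb R^3$.

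Next I would carry out the elementary classification of the image of $\eta$ in terms of the vectors $P_1,P_2$. If $P_1,P_2$ are linearly independent, then in coordinates $P_0+sP_1+tP_2$ on the affine plane $P_0+\langle P_1,P_2\rangle$ the curve is $\{t=s^2\}$, an honest non-degenerate parabola. If $P_2\neq 0$ and $P_1=\lambda P_2$, then $\eta(y)=P_0+(y^2+\lambda y)P_2$, and since $y^2+\lambda y$ ranges exactly over $[-\lambda^2/4,\infty)$ the image is the closed half-line $P_0-\tfrac{\lambda^2}{4}P_2+[0,\infty)P_2$. If $P_2=0$ and $P_1\neq0$ the image is the full line $P_0+\mathbb R P_1$, and if $P_1=P_2=0$ it is the point $P_0$; these four cases are mutually exclusive and exhaust all possibilities for $(P_1,P_2)$. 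To conclude I identify these conditions with those in Table~\ref{condicoes}: since $P_1\times P_2=(-\gamma_3,-\gamma_2,\gamma_1)$, linear independence of $P_1,P_2$ is precisely ``$\gamma_1\neq0$ or $\gamma_2\neq0$ or $\gamma_3\neq0$'', which by Remark~\ref{obs.coef.} is the orbit $(x,xy,y^2,0)$; ``$\gamma_1=\gamma_2=\gamma_3=0$ and $a_{02}^2+b_{02}^2+c_{02}^2>0$'' ($P_1\parallel P_2$, $P_2\neq0$) is the orbit $(x,y^2,0,0)$; ``$a_{02}=b_{02}=c_{02}=0$ and $a_{11}^2+b_{11}^2+c_{11}^2>0$'' ($P_2=0\neq P_1$) is the orbit $(x,xy,0,0)$; and total vanishing of the remaining six coefficients ($P_1=P_2=0$) is $(x,0,0,0)$. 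Reading each equivalence in both directions gives (a)–(d).

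The points that need care are, first, the legitimacy of working with the normalised representative, that is, checking that passing to it alters neither the $\mathcal A^2$-orbit of $j^2f(0)$ nor the shape of $\Delta_p$; and second, the separation of the degenerate cases, namely that when $P_1\parallel P_2$ and $P_2\neq0$ one obtains a \emph{closed} half-line (the minimum of $y^2+\lambda y$ is attained) which is \emph{not} a full line (that quadratic is bounded below), and that exactly these two features distinguish $(x,y^2,0,0)$ from $(x,xy,0,0)$. Everything else reduces to routine linear algebra with the quadratic coefficient vectors $P_1,P_2$.
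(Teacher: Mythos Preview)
Your proof is correct and follows essentially the same route as the paper's: both normalise $f$, parametrise $\Delta_p$ as a quadratic $\eta(y)=P_0+yP_1+y^2P_2$ in $N_pM$, and detect non-degeneracy via the nonvanishing of $P_1\times P_2$ (the paper packages this as the space-curve curvature $\kappa=|\eta'\times\eta''|/|\eta'|^3$, observing that $\eta'\times\eta''$ is the constant vector $4P_1\times P_2$, whose components are $\pm\gamma_i$). Your case analysis for the three degenerate shapes is somewhat more explicit than the paper's, which simply remarks that those cases follow from the parametrisation of $\Delta_p$ together with Table~\ref{condicoes}.
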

\begin{proof}
We shall assume without loss of generality that
$$j^{2}f(0)=\left(x,\frac{1}{2}(a_{20}x^{2}+2a_{11}xy+a_{02}y^{2}),\frac{1}{2}(b_{20}x^{2}+2b_{11}xy+b_{02}y^{2}),
\frac{1}{2}(c_{20}x^{2}+2c_{11}xy+c_{02}y^{2})\right)$$
and $\{\textbf{e}_{1},\textbf{e}_{2},\textbf{e}_{3},\textbf{e}_{4}\}$ will denote an orthonormal frame of $\mathbb{R}^{4}$. Thus,
$T_{p}M=[\textbf{e}_{1}]$ and $N_{p}M=[\textbf{e}_{2},\textbf{e}_{3},\textbf{e}_{4}]$.
The matrix of coefficients of the second fundamental form is given by:
$$\left(
            \begin{array}{ccc}
              a_{20} & a_{11} & a_{02} \\
              b_{20} & b_{11} & b_{02} \\
              c_{20} & c_{11} & c_{02} \\
            \end{array}
          \right).
$$
Even though $\Delta_{p}$ is a curve whose trace lies in $\mathbb{R}^{4}$, we shall consider it a curve in a three-dimensional space and parametrise it by
$$\eta(y)=(0,a_{20}+2a_{11}y+a_{02}y^{2},b_{20}+2b_{11}y+b_{02}y^{2},c_{20}+2c_{11}y+c_{02}y^{2}).$$
The curvature of $\eta$ is
$$\kappa(y)=\frac{|\eta'(y)\times\eta''(y)|}{|\eta'(y)|^{3}},$$
and it will be identically zero iff $\eta'(y)\times\eta''(y)=0$.
Calculations show us that
$$\eta'(y)\times\eta''(y)=4(c_{02}b_{11}-b_{02}c_{11},a_{02}c_{11}-c_{02}a_{11},b_{02}a_{11}-a_{02}b_{11}).$$
Therefore, the curvature parabola $\Delta_{p}$ is a degenerate parabola if, and only if,
$$c_{02}b_{11}-b_{02}c_{11}=a_{02}c_{11}-c_{02}a_{11}=b_{02}a_{11}-a_{02}b_{11}=0,$$
which is equivalent to $\gamma_{1}=\gamma_{2}=\gamma_{3}=0$ due to Remark \ref{obs.coef.}. The other cases follow from the parametrisation of $\Delta_{p}$ and the Remark \ref{obs.coef.}.
\end{proof}

The group of $2$-jets of diffeomorphisms from $(\mathbb{R}^{2},0)$ to $(\mathbb{R}^{2},0)$ is denoted by $\mathcal{R}^{2}$ and $\mathcal{O}(4)$ is the group of linear isometries in $\mathbb{R}^{4}$. Thus $\mathcal{R}^{2}\times\mathcal{O}(4)$ is a subgroup of $\mathcal{A}^{2}$ that also acts on $\Sigma^{1}J^{2}(2,4)$. Here, $\mathcal{M}_{2}$ denotes the maximal ideal in the local ring of smooth function germs $(\mathbb{R}^{2},0)\rightarrow(\mathbb{R},0)$ and for each $k\geq1$, $\mathcal{M}_{2}^{k}$ is the ideal of functions whose $(k-1)$-jet is zero.

In order to show that two $2$-jets of local parametrisations are equivalent under the action of $\mathcal{R}^{2}\times\mathcal{O}(4)$ if and only if there exists an isometry preserving the respective curvature parabola, we shall need some ``special parametrisations", which are given in the next lemma. These parametrisations are the ones obtained by smooth changes of coordinates in the source and isometries in the target.

\begin{lem}\label{lemma2jato2}
Let $f:(\mathbb{R}^{2},0)\rightarrow(\mathbb{R}^{4},0)$ be a corank $1$ map germ. Using only smooth changes of coordinates in the source and isometries in the target, $f$ can be reduced to one of the following forms:
\begin{itemize}
\item[(i)] $(x,y)\mapsto(x,xy+p(x,y),b_{20}x^{2}+b_{11}xy+b_{02}y^{2}+q(x,y),c_{20}x^{2}+r(x,y))$, if
$j^{2}f(0)\sim_{\mathcal{A}^{2}}(x,xy,y^{2},0)$;
\item[(ii)] $(x,y)\mapsto(x,a_{20}x^{2}+y^{2}+p(x,y),b_{20}x^{2}+q(x,y),r(x,y))$, if
$j^{2}f(0)\sim_{\mathcal{A}^{2}}(x,y^{2},0,0)$;
\item[(iii)] $(x,y)\mapsto(x,xy+p(x,y),b_{20}x^{2}+q(x,y),r(x,y))$, if $j^{2}f(0)\sim_{\mathcal{A}^{2}}(x,xy,0,0)$;
\item[(iv)] $(x,y)\mapsto(x,p(x,y),b_{20}x^{2}+q(x,y),r(x,y))$, if $j^{2}f(0)\sim_{\mathcal{A}^{2}}(x,0,0,0)$,
\end{itemize}
where $a_{ij},b_{ij},c_{ij}\in\mathbb{R}$, $b_{02}>0$ and $p,q,r\in\mathcal{M}_{2}^{3}$.
\end{lem}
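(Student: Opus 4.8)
The plan is to start from an arbitrary corank $1$ map germ $f:(\mathbb{R}^{2},0)\to(\mathbb{R}^{4},0)$ and reduce it step by step, using only source diffeomorphisms and target isometries, so that in the end we cannot throw away more of the $2$-jet than the classification in Lemma \ref{lema2jato} permits. First I would apply a linear isometry in the target to arrange that $T_pM=[\textbf{e}_1]$, so that $df_0$ sends $\partial_x$ to a multiple of $\textbf{e}_1$ and $\partial_y$ to $0$; after a further linear change in the source (scaling $x$) we may take $f=(x,f_2,f_3,f_4)$ with each $f_i\in\mathcal{M}_2^2$, i.e. $j^1f_i(0)=0$ for $i=2,3,4$. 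Write $f_i=\tfrac12(a^{(i)}_{20}x^2+2a^{(i)}_{11}xy+a^{(i)}_{02}y^2)+(\text{higher order})$; the $2$-jet is governed by the $3\times3$ matrix of second-order coefficients, and a further rotation in the $[\textbf{e}_2,\textbf{e}_3,\textbf{e}_4]$-space lets us put this matrix into a convenient reduced shape depending on which of the four $\mathcal{A}^2$-orbits we are in. The key point is that the rotation in the target acts on the rows of this matrix, and the linear source changes (which must now preserve the normalisation $j^1f_i(0)=0$, hence are essentially of the form $(x,y)\mapsto(x,y+\lambda x)$ together with a rescaling of $y$) act on the columns in a controlled way.

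Next, for each orbit separately I would do the fine normalisation of the $2$-jet. In case (i), $j^2f(0)\sim_{\mathcal{A}^2}(x,xy,y^2,0)$: the coefficient matrix has rank $2$ on the $xy$- and $y^2$-columns, and by a rotation among $\textbf{e}_2,\textbf{e}_3,\textbf{e}_4$ followed by rescaling we can make the $(1,2)$-entry equal to $1$ (the coefficient of $xy$ in $f_2$), kill the $y^2$-coefficient of $f_2$ by the source change $y\mapsto y+\lambda x$ (which only adds multiples of $x^2$ and $xy$ — but the $xy$ coefficient is absorbed again), arrange $f_4$ to have no $xy$- or $y^2$-term, and keep $f_2$ with only an $x^2$-term beyond $xy$. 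A final rotation fixing $\textbf{e}_2$ can be used to zero out the remaining cross terms so that $f_2=xy$, $f_3=b_{20}x^2+b_{11}xy+b_{02}y^2$ with $b_{02}>0$ (the sign achieved by $\textbf{e}_3\mapsto-\textbf{e}_3$ and, if needed, $x\mapsto-x$), and $f_4=c_{20}x^2$ modulo $\mathcal{M}_2^3$. The remaining third-order and higher terms are swept into $p,q,r\in\mathcal{M}_2^3$. Cases (ii), (iii), (iv) are analogous but progressively easier: in (ii) the surviving $2$-jet is $(x,y^2,0,0)$ and one keeps $a_{20}x^2$ in the second slot (the $y^2$-term cannot be scaled away once we insist on isometries, only normalised up to its sign, giving $+y^2$), $b_{20}x^2$ in the third, and moves everything else into $\mathcal{M}_2^3$; in (iii) one keeps $xy$ in the second slot and $b_{20}x^2$ in the third; in (iv) the whole $2$-jet of $f_2,f_3,f_4$ except $b_{20}x^2$ in the third slot is pushed into $\mathcal{M}_2^3$.

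The main obstacle is bookkeeping rather than conceptual: because we are only allowed isometries in the target (not arbitrary linear maps, as in the $\mathcal{A}^2$-classification), we cannot rescale the normal components freely, so we must be careful about which coefficients genuinely survive — this is why $b_{02}>0$ appears in (i) and why certain $a_{ij},b_{ij},c_{ij}$ remain as honest real parameters rather than being normalised to $0$ or $1$. I would track, at each reduction step, exactly which one-parameter subgroups of $\mathcal{R}^2\times\mathcal{O}(4)$ remain available (the stabiliser of the partial normal form), and verify that they suffice to clean up the next batch of coefficients without disturbing the ones already fixed. The recurring technical point is that a source change of the form $(x,y)\mapsto(x,y+\phi(x,y))$ with $\phi\in\mathcal{M}_2$ alters $f_i$ only by terms in the ideal generated by the lower-order part of $\partial_y f_i$, hence does not affect the leading $2$-jet normalisation once that has been arranged, so all such tidying-up can be absorbed into the remainder terms $p,q,r\in\mathcal{M}_2^3$. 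Carrying this out orbit by orbit yields precisely the four forms (i)--(iv).
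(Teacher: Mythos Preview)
Your plan is essentially the same as the paper's: put $f=(x,f_2,f_3,f_4)$ with $f_i\in\mathcal{M}_2^2$, then use rotations in the normal hyperplane $[\textbf{e}_2,\textbf{e}_3,\textbf{e}_4]$ together with source changes of the form $(x,y)\mapsto(x,\alpha x+\beta y)$ to normalise the $2$-jet orbit by orbit, pushing everything of order $\geq 3$ into $p,q,r$. The paper carries out case (i) explicitly with specific angles and source changes and declares the others analogous, exactly as you propose.

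One slip to fix: you say the source change $y\mapsto y+\lambda x$ kills the $y^2$-coefficient of $f_2$. It cannot --- that shear sends $y^2$ to $y^2+2\lambda xy+\lambda^2 x^2$, so the $y^2$-coefficient is untouched. In the paper's argument (and in any correct version of yours) the $y^2$-term in the second component is eliminated by a \emph{rotation} in the $YZ$-plane, with the angle chosen from the $y^2$-coefficients of $f_2$ and $f_3$; only \emph{after} that does a source shear $(x,y)\mapsto(x,\alpha x+\beta y)$ kill the $x^2$-term and normalise the $xy$-coefficient of $f_2$ to $1$. You have the right toolbox and the right stabiliser-tracking philosophy; just swap which tool does which job at that step, and the rest of your outline goes through.
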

\begin{proof}
The proof will be split into cases.
In the first one, consider $f$ a corank $1$ map germ such that $j^{2}f(0)\sim(x,xy,y^{2},0)$.
We may write $f$ as:
$$(x,\frac{a_{20}}{2}x^{2}+a_{11}xy+\frac{a_{02}}{2}y^{2}+o(3),\frac{b_{20}}{2}x^{2}+b_{11}xy+\frac{b_{02}}{2}y^{2}+o(3),
\frac{c_{20}}{2}x^{2}+c_{11}xy+\frac{c_{02}}{2}y^{2}+o(3)),$$
where $\gamma_{1}=a_{11}b_{02}-a_{02}b_{11}\neq0$ or $\gamma_{2}=a_{11}c_{02}-a_{02}c_{11}\neq0$ or $\gamma_{3}=c_{11}b_{02}-c_{02}b_{11}\neq0$ (see Remark \ref{obs.coef.}).
Suppose, without loss of generality that $\gamma_{1}\neq0$. Consider the angle $\theta_{1}$
satisfying
$$(\sin\theta_{1},\cos\theta_{1})=\frac{(a_{02},b_{02})}{\sqrt{a_{02}^{2}+b_{02}^{2}}}.$$
Also, consider the rotation in $\mathbb{R}^{4}$ given by
$$T_{1}=\left(
      \begin{array}{cccc}
        1 & 0 & 0 & 0 \\
        0 & \cos\theta_{1} & -\sin\theta_{1} & 0 \\
        0 & \sin\theta_{1} & \cos\theta_{1} & 0 \\
        0 & 0 & 0 & 1 \\
      \end{array}
    \right)
$$
and the change of coordinates in the source
$$\psi(x,y)=\left(x,\frac{1}{\gamma_{1}}(-\frac{1}{2}(a_{20}b_{02}-a_{02}b_{20})x+\sqrt{a_{02}^{2}+b_{02}^{2}}y)\right).$$
Hence,
$$(T_{1}\circ g\circ\psi)(x,y)=\left(x,xy+\sum_{i+j=3}\frac{\lambda_{ij}}{i!j!}x^{i}y^{j}+o(4),\sum_{i+j=2}\frac{\mu_{ij}}{i!j!}x^{i}y^{j}+o(3),
\sum_{i+j=2}\frac{\sigma_{ij}}{i!j!}x^{i}y^{j}+o(3)\right),$$
for constants $\lambda_{ij}$, $\mu_{ij}$ and $\sigma_{ij}$ and
rewriting,
$$f\sim(x,xy+p(x,y),a'x^{2}+b'xy+c'y^{2}+q'(x,y),a''x^{2}+b''xy+c''y^{2}+r'(x,y)),$$
where $p,q',r'\in\mathcal{M}_{2}^{3}$.
A rotation in the axes $ZW$ eliminates $c''y^{2}$ and
$$f\sim(x,xy+p(x,y),ax^{2}+bxy+cy^{2}+q(x,y),\tilde{a}x^{2}+\tilde{b}xy+r(x,y)),$$
where $p,q,r\in\mathcal{M}_{2}^{3}$.
Using the rotation $T_{2}$ given by
$$T_{2}=\left(
      \begin{array}{cccc}
        1 & 0 & 0 & 0 \\
        0 & \sin\theta_{2} & 0 & \cos\theta_{2} \\
        0 & 0 & 1 & 0 \\
        0 & -\cos\theta_{2} & 0 &\sin\theta_{2} \\
      \end{array}
    \right)
$$
of the angle $\theta_{2}$ such that $(\sin\theta_{2},\cos\theta_{2})=\frac{(1,\tilde{b})}{\sqrt{1+\tilde{b}^{2}}}$
and the change of coordinates in the source  $\bar{\varphi}(x,y)=(x,1/(\sin\theta_{2}+\tilde{b}\cos\theta_{2})y)$, we obtain
$$f\sim(x,xy+\tilde{a}\cos\theta_{2}x^{2}+\bar{p}(x,y),ax^{2}+bxy+cy^{2}+q(x,y),\bar{a}x^{2}+\bar{r}(x,y)).$$
The last change is also made in the source: $\tilde{\varphi}(x,y)=(x,y-\tilde{a}\cos\theta_{2}x)$. Therefore the first normal form is
$$(x,y)\mapsto(x,xy+p(x,y),b_{02}x^{2}+b_{11}xy+b_{02}y^{2}+q(x,y),c_{02}x^{2}+r(x,y)).$$
We shall assume $b_{02}>0$, since it is non-zero. Indeed, if $b_{02}<0$ we can rotate the axes $ZW$ by $\theta_{3}=\pi$. The remaining cases are very similar and their proof will be omitted.
\end{proof}

The following theorem is the main result of this section. It shows that the curvature parabola codifies the second order information of a corank $1$ surface $M\subset\mathbb{R}^{4}$. This means that the curvature parabola $\Delta_{p}$ and its position in the normal space $N_{p}M$ contains all the second order geometry of the surface at the singular point $p$.

\begin{teo}\label{mainteo}
Let $M_{1},M_{2}\subset\mathbb{R}^{4}$ be two corank $1$ surfaces
at $p_{1}\in M_{1}$ and $p_{2}\in M_{2}$ and parametrised by $f$ and $g$, respectively. The
$2$-jets $j^{2}f(0)$ and $j^{2}g(0)\in \Sigma^{1}J^{2}(2,4)$ are equivalent under the action of
$\mathcal{R}^{2}\times \mathcal{O}(4)$ if and only if, there is a linear isometry
$\varphi:N_{p_{1}}M_{1}\rightarrow N_{p_{2}}M_{2}$ such that $\varphi(\Delta_{p_{1}}M_{1})=\Delta_{p_{2}}M_{2}$.
\end{teo}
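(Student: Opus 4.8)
The plan is to prove both implications by reducing to the ``special parametrisations'' of Lemma \ref{lemma2jato2}. For the forward direction, suppose $j^{2}f(0)$ and $j^{2}g(0)$ are $\mathcal{R}^{2}\times\mathcal{O}(4)$-equivalent, say $j^{2}g(0)=\rho\circ j^{2}f(0)\circ\sigma^{-1}$ with $\sigma\in\mathcal{R}^{2}$ and $\rho\in\mathcal{O}(4)$. Since $\rho$ is a linear isometry of $\mathbb{R}^{4}$ carrying $T_{p_{1}}M_{1}$ to $T_{p_{2}}M_{2}$ (tangent lines are intrinsic to the $1$-jet, which is preserved), it restricts to a linear isometry $\varphi:N_{p_{1}}M_{1}\to N_{p_{2}}M_{2}$. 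The key computation is that the second fundamental form transforms correctly: writing $\sigma$ as a linear map $A$ on $2$-jet level (the linear part of the source diffeomorphism, since only the linear part acts on $2$-jets of the quadratic components once $x$ is normalised), the coefficient matrix of $II$ for $g$ is obtained from that of $f$ by left multiplication by the matrix of $\varphi$ and a substitution $\mathbf{u}\mapsto A\mathbf{u}$ in the source, exactly as in the change-of-coordinates computation inside the proof of Lemma \ref{lemma-2ff}. Since $C_{q}$ is carried to $C_{q'}$ under $A$ (both being the relevant unit-vector sets for the respective pseudometrics, and $\sigma$ must preserve the first fundamental form's rank-$1$ structure), we get $\varphi(\eta_{q}(C_{q}))=\eta_{q'}(C_{q'})$, i.e. $\varphi(\Delta_{p_{1}})=\Delta_{p_{2}}$.

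For the converse, suppose $\varphi:N_{p_{1}}M_{1}\to N_{p_{2}}M_{2}$ is a linear isometry with $\varphi(\Delta_{p_{1}})=\Delta_{p_{2}}$. By Theorem \ref{teo.2jato/parabola}, the two parabolas have the same type, so both $2$-jets lie over the same $\mathcal{A}^{2}$-orbit, and by Lemma \ref{lemma2jato2} we may assume both $f$ and $g$ are already in the corresponding special normal form. I would then argue case by case on the four types. In the non-degenerate case (i), the special form is $(x,xy+p,b_{20}x^{2}+b_{11}xy+b_{02}y^{2}+q,c_{20}x^{2}+r)$ with $b_{02}>0$; the curvature parabola is the parabola parametrised by $\eta(y)=(0,2y,b_{20}+2b_{11}y+b_{02}y^{2},c_{20})$ lying in the affine plane $\{W=c_{20}\}$ (after projecting out the tangent coordinate). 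Two such parabolas being related by an ambient linear isometry of the normal $3$-space forces equality of all the data $b_{20},b_{11},b_{02},c_{20}$: the axis direction, the ``width'' $b_{02}$, the vertex position, and the distance $c_{20}$ from the origin to the parabola's plane are all isometry invariants, and since the special form has already used up all the isometric freedom these must coincide for $f$ and $g$. Hence $j^{2}f(0)=j^{2}g(0)$ on the nose, which is trivially $\mathcal{R}^{2}\times\mathcal{O}(4)$-equivalence. The degenerate cases (ii)--(iv) are handled the same way: in each the special form is pinned down by finitely many real parameters (e.g. $a_{20},b_{20}$ in case (ii), with the additional normalisation making the coefficient of $y^{2}$ equal to $1$), and one checks that a normal-space isometry preserving the degenerate parabola (half-line, line, or point) together with its position relative to the origin forces these parameters to agree.

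The main obstacle is the converse direction: I must verify that the special parametrisations of Lemma \ref{lemma2jato2} really have no residual symmetry, i.e. that after fixing the special form the only remaining source-and-target changes that keep the form special are the identity (on $2$-jet level). Concretely, one needs to check that in case (i) the transformations used to reach the normal form --- rotations mixing the three normal coordinates, and source changes $y\mapsto \lambda y + \mu x$, $x\mapsto x$ --- have been completely exhausted, so that the map from special $2$-jets to $\{(b_{20},b_{11},b_{02},c_{20}):b_{02}>0\}$ is injective. Equivalently: a linear isometry of $\mathbb{R}^{3}$ sending one of these explicit parabolas (with its specified position) to another must be the identity. This is a finite linear-algebra verification, but it is the crux; once it is done, $\varphi(\Delta_{p_{1}})=\Delta_{p_{2}}$ directly yields $j^{2}f(0)=j^{2}g(0)$ and the theorem follows. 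A minor point to be careful about is that $\varphi$ is only assumed to be an isometry of the normal \emph{hyperplanes}, not of all of $\mathbb{R}^{4}$; but since the tangent line is the orthogonal complement and is one-dimensional, $\varphi$ extends (in two ways, differing by a sign on the tangent line) to an element of $\mathcal{O}(4)$ fixing the splitting, and either extension serves.
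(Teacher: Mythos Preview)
Your forward direction is correct and actually cleaner than the paper's. The paper first puts \emph{both} $2$-jets into the special form of Lemma~\ref{lemma2jato2}, writes the equivalence $(\psi,\varphi)$ as explicit matrices $P$ and $A$, expands $\varphi\circ j^2f(0)\circ\psi$ component by component, and solves the resulting system to conclude that $A$ restricts to the required isometry of the normal hyperplanes. Your argument bypasses this: $\rho\in\mathcal{O}(4)$ automatically carries $T_{p_1}M_1=\mathrm{Im}\,df_0$ to $T_{p_2}M_2$, hence restricts to $\varphi$ on the normals; the identity $II_g(\mathbf u,\mathbf u)=\varphi\bigl(II_f(A^{-1}\mathbf u,A^{-1}\mathbf u)\bigr)$ together with $A(C_q)=C_{q'}$ (which follows from $I_g(\mathbf u,\mathbf u)=I_f(A^{-1}\mathbf u,A^{-1}\mathbf u)$) gives $\varphi(\Delta_{p_1})=\Delta_{p_2}$ with no computation in coordinates.

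The converse, however, has a real gap: your assertion that the special form has \emph{no} residual symmetry---equivalently, that a linear isometry of the normal $3$-space sending one normal-form parabola to another must be the identity---is false. In case (i), with $j^2f(0)=(x,xy,ax^2+bxy+cy^2,dx^2)$ and $c>0$, the parabola $\eta(y)=(2y,\,2a+2by+2cy^2,\,2d)$ is preserved as a set by the reflections $(Y,Z,W)\mapsto(-Y,Z,W)$ and $(Y,Z,W)\mapsto(Y,Z,-W)$, which correspond to $b\mapsto -b$ and $d\mapsto -d$ independently. Your list of invariants (latus rectum, vertex, distance from the origin to $\mathcal{A}ff_p$) only recovers $(a,|b|,c,|d|)$. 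So from $\varphi(\Delta_{p_1})=\Delta_{p_2}$ you can only conclude that the parameters agree \emph{up to these sign flips}, and a further step is needed: each residual symmetry must be exhibited as an element of $\mathcal{R}^2\times\mathcal{O}(4)$ preserving the special form (namely $y\mapsto -y$ in the source combined with $Y\mapsto -Y$ in the target, and $W\mapsto -W$ in the target). This is exactly what the paper's converse does: it writes $\varphi$ as a $3\times 3$ orthogonal matrix, compares with $\eta_2$, finds the matrix is diagonal with $\pm 1$ entries, obtains $\tilde a=\pm a$, $\tilde b=\pm b$, $\tilde c=\pm c$, $\tilde d=\pm d$, and then extends $\varphi$ to $\mathcal{O}(4)$ to realise the equivalence. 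Your architecture is right, but the ``finite linear-algebra verification'' you defer is not a formality---it does not confirm your expectation, and the residual symmetries it uncovers must be handled explicitly.
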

\begin{proof} Assume that $j^{2}f(0)$ and $j^{2}g(0)$ are $\mathcal{A}^{2}$-equivalent to $(x,xy,y^{2},0)$.
By Lemma \ref{lemma2jato2}, we can reduce them to
$$j^{2}f(0)=(x,xy,ax^{2}+bxy+cy^{2},dx^{2})\ \mbox{and}\ j^{2}g(0)=(x,xy,\tilde{a}x^{2}+\tilde{b}xy+\tilde{c}y^{2},\tilde{d}x^{2}).$$
We shall denote the coordinates in $\mathbb{R}^{4}$ by $(X,Y,Z,W)$. Both normal hyperplanes $N_{p_{1}}M_{1}$ and $N_{p_{2}}M_{2}$ coincide with the hyperplane $YZW$. The curvature parabolas $\Delta_{p_{1}}M_{1}$ and $\Delta_{p_{2}}M_{2}$ can be respectively parametrised by:
$$\eta_{1}(y)=(0,2y,2a+2by+2cy^{2},2d)\ \mbox{and}\ \eta_{2}(y)=(0,2y,2\tilde{a}+2\tilde{b}y+2\tilde{c}y^{2},2\tilde{d}).$$
Suppose that $j^{2}f(0)\sim j^{2}g(0)$ under the action of $\mathcal{R}^{2}\times \mathcal{O}(4)$. Thus, there is a pair
$(\psi,\varphi)\in\mathcal{R}^{2}\times \mathcal{O}(4)$ such that $\varphi\circ j^{2}f(0)\circ\psi=j^{2}g(0)$.
We can assume $\psi$ linear, since both $2$-jets are homogeneous. The matrices of $\psi$ and $\varphi$ are, respectively:
$$P=\left(
    \begin{array}{cc}
      p & q \\
      r & s \\
    \end{array}
  \right)\ \mbox{and}\
 A= \left(
    \begin{array}{cccc}
      a_{11} & a_{12} & a_{13} & a_{14} \\
      a_{21} & a_{22} & a_{23} & a_{24} \\
      a_{31} & a_{32} & a_{33} & a_{34} \\
      a_{41} & a_{42} & a_{43} & a_{44} \\
    \end{array}
  \right)$$

\noindent satisfying $\det(P)\neq0$ and $AA^{t}=I$.
We write $\varphi\circ j^{2}f(0)\circ\psi=(h_{1},h_{2},h_{3},h_{4})$, with
$$\begin{array}{cl}
    h_{i} & =a_{i1}px+a_{i1}qy+a_{i2}(prx^{2}+(ps+qr)xy+qsy^{2})+a_{i3}(ap^{2}+bpr+cr^{2})x^{2} \\
          & +a_{i3}(2apq+b(ps+qr)+2crs)xy+a_{i3}(aq^{2}+bqs+cs^{2})y^{2}+a_{i4}dp^{2}x^{2} \\
          & +2a_{i4}dpqxy+a_{i4}dq^{2}y^{2},\ \ i=1,2,3,4.
  \end{array}
$$
Comparing the terms of $\varphi\circ j^{2}f(0)\circ\psi$ and $j^{2}g(0)$:
$$\begin{array}{l}
    a_{11}p=1\ \ a_{12}p=a_{13}p=a_{14}p=0 \\
    a_{11}q=a_{12}q=a_{13}q=a_{14}q=0 \\
    a_{12}pr+a_{13}(ap^{2}+bpr+cr^{2})+a_{14}dp^{2}=0 \\
    a_{22}pr+a_{23}(ap^{2}+bpr+cr^{2})+a_{24}dp^{2}=0 \\
    a_{32}pr+a_{33}(ap^{2}+bpr+cr^{2})+a_{34}dp^{2}=\tilde{a} \\
    a_{42}pr+a_{43}(ap^{2}+bpr+cr^{2})+a_{44}dp^{2}=\tilde{d} \\
    a_{12}(ps+qr)+a_{13}(2apq+bps+bqr+2crs)+2a_{14}dpq=0 \\
    a_{22}(ps+qr)+a_{23}(2apq+bps+bqr+2crs)+2a_{24}dpq=1 \\
    a_{32}(ps+qr)+a_{33}(2apq+bps+bqr+2crs)+2a_{34}dpq=\tilde{b} \\
    a_{42}(ps+qr)+a_{43}(2apq+bps+bqr+2crs)+2a_{44}dpq=0 \\
    a_{12}qs+a_{13}(aq^{2}+bqs+cs^{2})+a_{14}dq^{2}=0 \\
    a_{22}qs+a_{23}(aq^{2}+bqs+cs^{2})+a_{24}dq^{2}=0 \\
    a_{32}qs+a_{33}(aq^{2}+bqs+cs^{2})+a_{34}dq^{2}=\tilde{c} \\
    a_{42}qs+a_{43}(aq^{2}+bqs+cs^{2})+a_{44}dq^{2}=0.
  \end{array}
$$
From the equations above and using the conditions: $\det(P)=ps-qr\neq0$ and $AA^{t}=I_{4}$,
we have $a_{11},p,s,a_{33}\neq0$ and $r=q=a_{12}=a_{13}=a_{14}=a_{21}=a_{31}=a_{41}=a_{23}=a_{43}
=a_{32}=a_{34}=a_{24}=a_{42}=0$.
Hence,
$$\begin{array}{l}
    a_{11}p=p^{2}=s^{2}=1,\ \tilde{a}=a_{33}a \\
    \tilde{d}=a_{44}d,\ a_{22}ps=1 \\
    \tilde{b}=a_{33}bps,\ \tilde{c}=a_{33}c \\
    a_{11}^{2}=a_{22}^{2}=a_{33}^{2}=a_{44}^{2}=1.
  \end{array}
$$

The isometry we are looking for is precisely $\varphi$.
To show it, it is enough to prove that $\varphi\circ\eta_{1}$ and $\eta_{2}$ have the same trace, in other words, there is a change of coordinates that maps one parametrisation to another.

Since $a_{22}$ and $ps$ have the same sign, or both parametrisations coincide or we just need to consider the change $y\mapsto -y$. For the converse, suppose that there is a linear isometry $\varphi:N_{p_{1}}M_{1}\rightarrow N_{p_{2}}M_{2}$ satisfying $\varphi(\Delta_{p_{1}}M_{1})=\Delta_{p_{2}}M_{2}$. The matrix of $\varphi$ is:
$$B=\left(
            \begin{array}{ccc}
              b_{11} & b_{12} & b_{13} \\
              b_{21} & b_{22} & b_{23} \\
              b_{31} & b_{32} & b_{33} \\
            \end{array}
          \right).
$$
Comparing $\varphi\circ\eta_{1}$ and $\eta_{2}$, we obtain:
$$
\begin{array}{l}
b_{12}=b_{13}=b_{31}=b_{32}=0,\ b_{11}=1 \\
\tilde{c}=b_{22}c \\
\tilde{b}=b_{21}+b_{22}b \\
\tilde{a}=b_{22}a+b_{23}d \\
\tilde{d}=b_{33}d.
\end{array}
$$

Since $\varphi$ is a linear isometry, $B$ is orthogonal: $BB^{t}=I_{3}$. Adding this condition, we have $b_{21}=b_{23}=0$,
$b_{22}^{2}=b_{33}^{2}=1$ and $b_{22}b_{33}=1$.

Thus,
$$\tilde{a}=\pm a,\ \tilde{b}=\pm b,\ \tilde{c}=\pm c,\mbox{e}\ \tilde{d}=\pm d,$$
where the signs coincide since the same happens to $b_{22}$ and $b_{33}$.
Lastly, we can extend $\varphi$ to a linear isometry of $\mathbb{R}^{4}$, and then
$j^{2}f(0)\sim j^{2}g(0)$ under the action of $\mathcal{R}^{2}\times\mathcal{O}(4)$.
The remaining cases proceed in a similar way.
\end{proof}


\section{Second order properties}
In this section we aim to define and study some second order
invariants (that is, properties that are invariant under the action
of $\mathcal{R}^{2}\times\mathcal{O}(4)$) for corank $1$ surfaces in
$\mathbb{R}^{4}$. We shall look at the geometry of regular surfaces
in $\mathbb{R}^{4}$. This study is local and the procedure is as
follows. Given a regular surface $N\subset\mathbb{R}^{5}$, we
consider the corank $1$ surface $M$ at $p$ obtained by the
projection of $N$ in a tangent direction, via the map
$\xi:N\subset\mathbb{R}^{5}\rightarrow M$. The regular surface
$N\subset\mathbb{R}^{5}$ can be taken, locally, as the image of an
immersion $i:\tilde{M}\rightarrow N\subset\mathbb{R}^{5}$, where
$\tilde{M}$ is the regular surface from the construction done
before.

The local correspondence between these two surfaces carries out an
important geometric consequence: the regular surface has the same
second fundamental form as $M$. The points of $N$ can be
characterized according to the rank of its fundamental form at that
point. Inspired by this classification, we can classify the singular
points of the surface $M$:

\begin{definition}
Given a singular point of corank $1$, $p\in M$, we say that $p\in M_{i}$ if the rank of the second fundamental form at $p$ is $i$, $i=0,1,2,3$.
\end{definition}


\begin{definition}
The minimal affine space which contains the curvature parabola is denoted by $\mathcal{A}ff_{p}$. The plane denoted by $E_{p}$ is the vector space: parallel to $\mathcal{A}ff_{p}$ when $\Delta_{p}$ is a non degenerate parabola, the plane through $p$ that contains $\mathcal{A}ff_{p}$ when $\Delta_{p}$ is a non radial half-line or a non radial line and any plane through $p$ that contains $\mathcal{A}ff_{p}$ when $\Delta_{p}$ is a radial half-line, a radial line or a point.
\end{definition}

\begin{lem}\label{lemma/tipo.de.pontos}
Let $p$ be a singular point of corank $1$ of the surface $M\subset\mathbb{R}^{4}$. Then, the following holds:
\begin{enumerate}
    \item[(i)] When $\Delta_{p}$ is a non degenerate parabola, $p\in M_{2}$ or $p\in M_{3}$ according to $\mathcal{A}ff_{p}=E_{p}$ or not, respectively;
    \item[(ii)] When $\Delta_{p}$ is a half-line or a line, $p\in M_{1}$ or $p\in M_{2}$ depending on $\Delta_{p}$ being radial or not, respectively;
    \item[(iii)] When $\Delta_{p}$ is a point, $p\in M_{0}$ or $p\in M_{1}$ according to $\Delta_{p}$ is $p$ or not, respectively.
\end{enumerate}
\end{lem}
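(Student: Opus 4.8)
The plan is to read off the rank of the second fundamental form directly from the parametrisation normal forms obtained in Lemma \ref{lemma2jato2}, and to compare it with the shape of $\Delta_p$ as described in Theorem \ref{teo.2jato/parabola}. Recall that, with $C_q$ a pair of lines parallel to the $\partial_y$-axis, $\Delta_p$ is parametrised by $\eta(y)=\sum_{i=1}^3(l_{\nu_i}+2m_{\nu_i}y+n_{\nu_i}y^2)\nu_i$, so that $\Delta_p$ is the affine image of the curve $y\mapsto(1,y,y^2)$ under the linear map whose matrix is the second fundamental form matrix $\alpha$ with columns $(l_{\nu_i}),(m_{\nu_i}),(n_{\nu_i})$. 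Hence the affine span $\mathcal{A}ff_p$ has dimension equal to the number of linearly independent \emph{nonconstant} parts, and $\operatorname{rank}(\alpha)$ records the total. The key observation driving every case is: $\Delta_p$ is a non-degenerate parabola exactly when the two ``moving'' columns $m=(m_{\nu_i})$ and $n=(n_{\nu_i})$ are linearly independent, and then $\operatorname{rank}(\alpha)\in\{2,3\}$ according to whether $l=(l_{\nu_i})$ lies in the plane spanned by $m,n$ (equivalently, whether $\mathcal{A}ff_p$ passes through the origin/ is parallel to $E_p$) or not.

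For item (i): by Theorem \ref{teo.2jato/parabola}(a), $\Delta_p$ a non-degenerate parabola corresponds to $j^2f(0)\sim_{\mathcal A^2}(x,xy,y^2,0)$, and by Lemma \ref{lemma2jato2}(i) we may take $f$ with $2$-jet $(x,xy,b_{20}x^2+b_{11}xy+b_{02}y^2,c_{20}x^2)$, $b_{02}>0$. Then $\alpha=\begin{pmatrix}0&1&0\\ b_{20}&b_{11}&b_{02}\\ c_{20}&0&0\end{pmatrix}$, whose rank is $2$ if $c_{20}=0$ and $3$ if $c_{20}\neq0$. On the other hand $E_p$ is the plane parallel to $\mathcal{A}ff_p$, which is the affine span of $\eta(y)=(2y,\,2b_{20}+2b_{11}y+2b_{02}y^2,\,2c_{20})$; this span lies in the plane through the origin spanned by the $y$- and $y^2$-directions precisely when $c_{20}=0$, i.e.\ $\mathcal{A}ff_p=E_p$ iff $c_{20}=0$ iff $p\in M_2$. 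For item (ii): by Theorem \ref{teo.2jato/parabola}(b),(c), $\Delta_p$ a half-line or line corresponds to $(x,y^2,0,0)$ or $(x,xy,0,0)$; using Lemma \ref{lemma2jato2}(ii),(iii) the second fundamental form matrices are $\begin{pmatrix}a_{20}&0&1\\ b_{20}&0&0\\0&0&0\end{pmatrix}$ and $\begin{pmatrix}0&1&0\\ b_{20}&0&0\\0&0&0\end{pmatrix}$ respectively, each of rank $1$ or $2$. One checks that the rank drops to $1$ exactly when the image line is radial (passes through $p$): in the half-line case this is $b_{20}=0$, and indeed $\eta(y)=(a_{20}+y^2,b_{20},0)$ traces a half-line through the origin iff $b_{20}=0$; the linear case is analogous. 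Item (iii) is immediate: $\Delta_p$ a point corresponds to $(x,0,0,0)$ with matrix $\begin{pmatrix}0&0&0\\ b_{20}&0&0\\0&0&0\end{pmatrix}$ of rank $0$ if $b_{20}=0$ and $1$ otherwise, and $\eta$ is the single point $(0,b_{20},0)$, which equals $p=0$ iff $b_{20}=0$.

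The main obstacle is not any single computation but making the bookkeeping uniform: one must be careful that the ``radial/non-radial'' and ``$\mathcal{A}ff_p=E_p$ or not'' dichotomies in the statement are genuinely coordinate-free and coincide with the algebraic rank conditions read off from the normal forms. This is ensured because $\operatorname{rank}(\alpha)$ is invariant under $\mathcal{R}^2\times\mathcal O(4)$ (the source change multiplies $\alpha$ on the right by an invertible $3\times3$ matrix coming from the quadratic reparametrisation together with the $C_q$-rescaling, and the target isometry multiplies on the left by an element of $\mathcal O(3)$ fixing the tangent direction), and the position of $\mathcal{A}ff_p$ relative to the origin is likewise preserved; so it suffices to verify the equivalences on the representatives furnished by Lemma \ref{lemma2jato2}, which is what the above does. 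Finally one records that these are all the possibilities, since Theorem \ref{teo.2jato/parabola} exhausts the shapes of $\Delta_p$.
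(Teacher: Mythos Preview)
Your proposal is correct and follows exactly the approach the paper indicates: the paper's own proof simply reads ``Using the normal forms from Lemma~\ref{lemma2jato2}, the proof follows from straightforward calculations,'' and what you have written is precisely those straightforward calculations carried out case by case. Your added remarks on why the rank and the radial/$\mathcal{A}ff_p=E_p$ conditions are $\mathcal{R}^2\times\mathcal{O}(4)$-invariant (so that checking on the normal forms suffices) are a welcome clarification that the paper leaves implicit.
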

\begin{proof}
Using the normal forms from Lemma \ref{lemma2jato2}, the proof follows from straightforward calculations.
\end{proof}


\begin{figure}[h!]
\begin{center}
\includegraphics[scale=0.45]{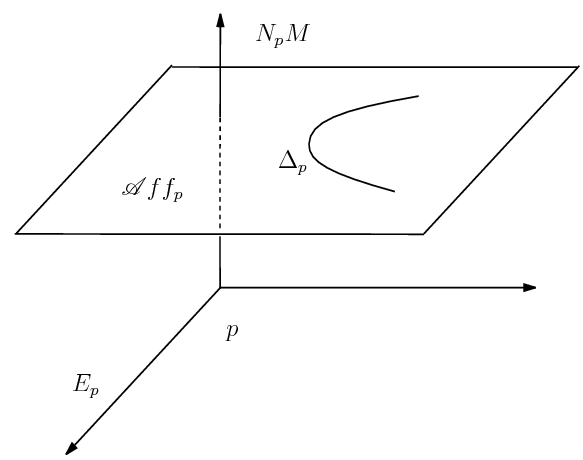}
\caption{The planes $\mathcal{A}ff_{p}$ and $E_{p}$}
\label{planes}
\end{center}
\end{figure}

Let $S\subset\mathbb{R}^{4}$ be the regular surface locally obtained by projecting $N\subset\mathbb{R}^{5}$ via the map $\texttt{p}$ into the four space given by $T_{\xi^{-1}(p)}N\oplus \xi^{-1}(E_{p})$ (see the following diagram).


$$
\xymatrix{
 &  & N\subset\mathbb{R}^{5}\ar[rd]^-{\texttt{p}}\ar[d]^-{\xi} & \\
 \mathbb{R}^{2}\ar@/_0.7cm/[rr]^-{f}& \tilde{M}\ar[l]_-{\phi}\ar[r]^-{g}\ar[ru]^-{i}& M\subset\mathbb{R}^{4}& S\subset\mathbb{R}^{4}
}
$$

\begin{ex}\label{ex.construcao}
Let $M\subset\mathbb{R}^{4}$ a corank $1$ surface at $p\in M$ given by $f(x,y)=(x,f_{2}(x,y),f_{3}(x,y),f_{4}(x,y))$, with $f_{i}\in\mathcal{M}_{2}^{2}$, for $i=2,3,4$. Notice that from Lemma \ref{lemma2jato2} we can always take $M$ locally parametrised as above and for any normal form, $E_{p}$ is the plane $YZ$ in $\mathbb{R}^{4}$. Also, let $N\subset\mathbb{R}^{5}$ be a regular surface given by $i(x,y)=(x,y,f_{2}(x,y),f_{3}(x,y),f_{4}(x,y))$, so that $M$ is the projection of $N$ in the tangent direction $(0,1)$. Then, the regular surface $S\subset\mathbb{R}^{4}$ is locally given by $(x,y)\mapsto(x,y,f_{2}(x,y),f_{3}(x,y))$.
\end{ex}

\begin{prop}
Suppose $p\in M_{i}$, $i=0,1$. The second order geometry of the surface $S\subset\mathbb{R}^{4}$ obtained by the previous construction does not depend on the choice of the plane $E_{p}\subset N_{p}M$.
\end{prop}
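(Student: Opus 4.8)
The plan is to work with the explicit model of Example~\ref{ex.construcao} and to use the fact, recorded in this section, that $N\subset\mathbb{R}^{5}$ and $M$ have the same second fundamental form. By Lemma~\ref{lemma2jato2} we may assume $M$ is parametrised by $f(x,y)=(x,f_{2},f_{3},f_{4})$ with $f_{i}\in\mathcal{M}_{2}^{2}$, since the coordinate changes and isometries involved lie in $\mathcal{R}^{2}\times\mathcal{O}(4)$ and hence do not affect the second order geometry; then $N$ is parametrised by $(x,y)\mapsto(x,y,f_{2},f_{3},f_{4})$. Write $F=(f_{2},f_{3},f_{4})$ and let $\pi\colon N_{p}M\to E_{p}$ be the orthogonal projection, where the normal $3$-space of $N$ at $\xi^{-1}(p)$ is identified with $N_{p}M$ in the natural way. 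By the construction of $S$, it is parametrised by $(x,y)\mapsto(x,y,\pi(F(x,y)))$, and choosing an orthonormal basis $\{u_{1},u_{2}\}$ of $E_{p}$ this reads $(x,y)\mapsto(x,y,\langle F,u_{1}\rangle,\langle F,u_{2}\rangle)$. Since $F$ has zero $1$-jet, a direct computation shows that the second fundamental form of the regular surface $S$ at the point corresponding to $p$ has coefficient rows $\pi(II(\partial_{x},\partial_{x}))$, $\pi(II(\partial_{x},\partial_{y}))$, $\pi(II(\partial_{y},\partial_{y}))$ expressed in the basis $\{u_{1},u_{2}\}$; equivalently, the curvature ellipse of $S$ at that point is the image under $\pi$ of the curvature ellipse of $N$ at $\xi^{-1}(p)$.

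The key observation is that the linear span $\Pi_{p}:=\langle II(\partial_{x},\partial_{x}),II(\partial_{x},\partial_{y}),II(\partial_{y},\partial_{y})\rangle\subseteq N_{p}M$ has dimension $i$ when $p\in M_{i}$, and that both the curvature parabola $\Delta_{p}$ and the curvature ellipse of $N$ are contained in $\Pi_{p}$, since each is the image of the quadratic map $\mathbf{u}\mapsto II(\mathbf{u},\mathbf{u})$. I claim that for $i\le1$ every admissible plane $E_{p}$ contains $\Pi_{p}$. For $i=0$ this is trivial, as $\Pi_{p}=\{0\}$. For $i=1$, $\Pi_{p}$ is a line $\ell$ through $p$; by Lemma~\ref{lemma/tipo.de.pontos} the curvature parabola $\Delta_{p}$ is then a point distinct from $p$, a radial half-line, or a radial line, and in each of these cases the affine span of $\{p\}\cup\mathcal{A}ff_{p}$ is exactly $\ell$ (a point of $\Delta_{p}$ other than $p$ together with $p$ spans $\ell$; a radial half-line or line has affine hull a line through $p$ contained in $\Pi_{p}$, hence equal to $\ell$). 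Since $E_{p}$ is by definition a plane through $p$ containing $\mathcal{A}ff_{p}$, it contains $\ell=\Pi_{p}$.

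Combining the two steps, the curvature ellipse of $N$ lies in $\Pi_{p}\subseteq E_{p}$, so $\pi$ restricts to the identity on it; hence the curvature ellipse of $S$ at the point corresponding to $p$ coincides with that of $N$, regardless of the choice of $E_{p}$. Taking $u_{1}$ to span $\Pi_{p}$ (when $i=1$) and $u_{2}$ orthogonal to it, the coefficient matrix of the second fundamental form of $S$ at that point does not depend on $E_{p}$ at all, so neither do any of the second order invariants of $S$: the point is a flat umbilic when $p\in M_{0}$, and an inflection point with a fixed position of $p$ relative to the (degenerate) curvature ellipse when $p\in M_{1}$. This proves the proposition.

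I expect the only genuinely delicate step to be the case analysis in the middle paragraph, namely checking via Lemma~\ref{lemma/tipo.de.pontos} that for $i\le1$ the defining constraint on $E_{p}$ already forces $\Pi_{p}\subseteq E_{p}$; the remaining assertions are routine computations in the model of Example~\ref{ex.construcao}. It is worth noting that this is exactly where the hypothesis $i\le1$ enters: for $i\ge2$ the curvature parabola $\Delta_{p}$, and hence $\mathcal{A}ff_{p}$, need not determine $\Pi_{p}$, so different admissible planes $E_{p}$ genuinely give rise to different second order geometries.
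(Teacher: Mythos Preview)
Your argument is correct and is genuinely different from the paper's. The paper works entirely in coordinates: assuming $\Delta_{p}$ lies on the $Y$--axis, it writes $f=(x,f_{2},f_{3},f_{4})$ with $f_{2}\in\mathcal{M}_{2}^{2}$ and $f_{3},f_{4}\in\mathcal{M}_{2}^{3}$, parametrises the two surfaces $S_{1},S_{2}$ arising from two choices of $E_{p}$ as $(x,y,f_{2},\alpha_{j}f_{2}+\beta_{j}f_{3}+\gamma_{j}f_{4})$, and then observes that a rotation in the last two target coordinates reduces both to the form $(x,y,f_{2}+p_{j},q_{j})$ with $p_{j},q_{j}\in\mathcal{M}_{2}^{3}$, so their $2$--jets (and hence their second fundamental forms) agree. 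By contrast, you identify the intrinsic reason behind this computation: the linear span $\Pi_{p}$ of the values of $II$ has dimension $i\le 1$, the definition of $E_{p}$ forces $\Pi_{p}\subset E_{p}$ (this is exactly your Lemma~\ref{lemma/tipo.de.pontos} step), and since the curvature ellipse of $N$ already lies in $\Pi_{p}$ the projection to $E_{p}$ does nothing to it. Your route makes transparent both why $i\le 1$ is essential and why the statement fails for $i\ge 2$ (where $\mathcal{A}ff_{p}$ need not determine $\Pi_{p}$), whereas the paper's computation, though shorter, leaves this structural point implicit. One small wording fix: in your first paragraph ``coefficient rows'' should be ``coefficient columns'' to match the paper's convention for the matrix of the second fundamental form, but the content (that the second fundamental form of $S$ is $\mathbf{u}\mapsto\pi(II(\mathbf{u},\mathbf{u}))$) is correct.
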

\begin{proof}
When $p\in M_{1}$, that is, $\Delta_{p}$ is a radial half-line, a radial line or a point other than $p$, we can assume that $\Delta_{p}$ is contained in one of the coordinate axes, for instance, the $Y$-axis. Let $\Gamma_{1},\Gamma_{2}$ be two distinct planes through $p$ such that $\Delta_{p}\subset\Gamma_{1},\Gamma_{2}$. Consider the regular surface $N\subset\mathbb{R}^{5}$, as before and the projection $\xi:N\rightarrow M$. The regular surfaces $S_{1},S_{2}\subset\mathbb{R}^{4}$ obtained by the projection of $N$ into the four spaces $T_{\xi^{1}(p)}N\oplus\xi^{-1}(\Gamma_{1})$ and $T_{\xi^{1}(p)}N\oplus\xi^{-1}(\Gamma_{2})$, respectively, are not the same, however, they have the same second fundamental form. Taking the local parametrisation of $M$ at $p$, $f(x,y)=(x,f_{2}(x,y),f_{3}(x,y),f_{4}(x,y))$, $f_{2}\in\mathcal{M}_{2}^{2}$, $f_{3},f_{4}\in\mathcal{M}_{2}^{3}$, we can locally parametrise $S_{1}$ and $S_{2}$ by
$$
g_{1}(x,y)=(x,y,f_{2}(x,y),\alpha_{1}f_{2}(x,y)+\beta_{1}f_{3}(x,y)+\gamma_{1}f_{4}(x,y))$$ and $$g_{2}(x,y)=(x,y,f_{2}(x,y),\alpha_{2}f_{2}(x,y)+\beta_{2}f_{3}(x,y)+\gamma_{2}f_{4}(x,y)),
$$
respectively, for $\alpha_{i},\beta_{i},\gamma_{i}\in\mathbb{R}$, $i=1,2$.
Rotations on the target take $g_{1}$ and $g_{2}$ to
$$\tilde{g}_{1}(x,y)=(x,y,f_{2}(x,y)+p_{1}(x,y),q_{1}(x,y))\ \mbox{and}\ \tilde{g}_{2}(x,y)=(x,y,f_{2}(x,y)+p_{2}(x,y),q_{2}(x,y)),$$
respectively, where $p_{i},q_{i}\in\mathcal{M}_{2}^{3}$, for $i=1,2$. Hence, their second fundamental forms are equivalent up to rotations on the target. The case $p\in M_{0}$ is analogous.
\end{proof}


In this section we show the relation of the corank $1$ singular surface $M\subset\mathbb{R}^{4}$ and the regular surface $S\subset\mathbb{R}^{4}$.

\subsection{Asymptotic directions}

The definitions of aymptotic and binormal directions are inspired by both definitions of such directions of regular surfaces in $\mathbb{R}^{4}$ and by corank $1$ surfaces in $\mathbb{R}^{3}$. These objects are defined using the second fundamental form of the surface and later will be characterized in terms of the curvature parabola.

\begin{definition}
A non zero direction $\textbf{u}\in T_{q}\tilde{M}$ is called \emph{asymptotic} if there is a non zero vector $\nu\in E_{p}$ such that
$$II_{\nu}(\textbf{u},\textbf{v})=\langle II(\textbf{u},\textbf{v}),\nu\rangle=0\ \ \forall\ \textbf{v}\in T_{q}\tilde{M}.$$
Moreover, in such case, we say that $\nu$ is a \emph{binormal direction}.
\end{definition}

The normal vectors $\nu\in N_{p}M$ satisfying the condition
$II_{\nu}(\textbf{u},\textbf{v})=0$ are called \emph{degenerate
directions}, but only those in $E_p$ are binormal directions.

\begin{lem}
When $p\in M_{1}\cup M_{0}$, the choice of $E_{p}$ does not change the number of binormal directions. Furthermore, all directions $\textbf{u}\in T_{q}\tilde{M}$ are asymptotic.
\end{lem}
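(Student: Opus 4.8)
The plan is to use the normal forms for $p \in M_1 \cup M_0$ coming from Lemma~\ref{lemma2jato2}, together with the description of the curvature parabola in these cases from Lemma~\ref{lemma/tipo.de.pontos} and Theorem~\ref{teo.2jato/parabola}. When $p \in M_1$, the curvature parabola $\Delta_p$ is a radial half-line, a radial line, or a point different from $p$; in each case its trace spans only a one-dimensional vector subspace of $N_pM$, say the line $\ell = [\nu_0]$. Any admissible choice of $E_p$ is a plane through $p$ containing $\mathcal{A}ff_p$, hence containing $\ell$. First I would write $f$ in a normal form so that $\Delta_p$ lies along a coordinate axis, say $II(\mathbf{u},\mathbf{u}) \in [\nu_1]$ for all $\mathbf{u} \in C_q$; by polarization this forces the image of the full bilinear map $II$ to lie in $[\nu_1]$. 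Consequently $II_\nu \equiv 0$ as soon as $\nu \perp \nu_1$, and $II_\nu \not\equiv 0$ exactly when $\langle \nu, \nu_1\rangle \neq 0$ (using that $\Delta_p$ is not the point $p$, so $II$ itself is nonzero).

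The next step is to compute binormal directions for a given choice $E_p = [\nu_1, \nu_2]$ with $\nu_2 \perp \nu_1$ arbitrary. A nonzero $\nu = a\nu_1 + b\nu_2 \in E_p$ is a binormal direction precisely when there exists a nonzero $\mathbf{u}$ with $II_\nu(\mathbf{u}, \cdot) = 0$. Since $II_\nu = a\, II_{\nu_1}$, this is nonzero iff $a \neq 0$, and then $II_\nu(\mathbf{u},\cdot) = 0$ iff $II_{\nu_1}(\mathbf{u},\cdot) = 0$. So the set of binormal directions is $\{a\nu_1 + b\nu_2 : a \neq 0,\ \mathbf{u} \mapsto II_{\nu_1}(\mathbf{u},\cdot)\ \text{has nontrivial kernel}\}$ when that kernel condition is satisfiable, and is $\{b\nu_2 : b \neq 0\}$ otherwise — but in the latter case $II_{\nu_1}$ is a nondegenerate quadratic form, and one checks from the $M_1$ normal forms that it is degenerate (its matrix has rank $\le 1$), so the kernel is always nontrivial. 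Hence the binormal directions are exactly the directions in $E_p \setminus [\nu_2]$ whose $\nu_1$-component is nonzero: this is a single projective direction, namely $[\nu_1]$ itself — independent of the choice of $\nu_2$, hence of $E_p$. The case $p \in M_0$ is even simpler: then $\Delta_p = \{p\}$, so $II \equiv 0$, every $\nu \in E_p$ is degenerate, and a count of binormal directions gives the same answer regardless of $E_p$ (all of $E_p$, with the zero vector excluded — or rather, as the whole of $E_p$ is degenerate the statement is about the invariance of this full cone, which is manifestly independent of anything once one notes $II \equiv 0$).

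For the last sentence, I would observe that $\mathbf{u} \in T_q\tilde{M}$ is asymptotic iff there is a nonzero $\nu \in E_p$ with $II_\nu(\mathbf{u}, \cdot) = 0$. When $p \in M_1$, taking $\nu = \nu_2 \perp \nu_1$ (which lies in $E_p$) gives $II_{\nu_2} \equiv 0$, so $II_{\nu_2}(\mathbf{u}, \cdot) = 0$ for every $\mathbf{u}$; thus every nonzero tangent direction is asymptotic. When $p \in M_0$, $II \equiv 0$ and the same conclusion is immediate for any nonzero $\nu \in E_p$.

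\textbf{Main obstacle.} The only delicate point is bookkeeping: one must make sure that the normal form for each $M_1$ subcase (radial half-line, radial line, point $\neq p$) indeed places the entire image of the bilinear form $II$ in a single coordinate line, and that the quadratic form $II_{\nu_1}$ is degenerate in each subcase so that a binormal direction genuinely exists; both follow by straightforward inspection of the forms in Lemma~\ref{lemma2jato2} but should be verified case by case. The conceptual content — that the binormal count depends only on $\mathcal{A}ff_p$ and the kernel of $II_{\nu_1}$, neither of which sees the ambiguity in $E_p$ — is then immediate.
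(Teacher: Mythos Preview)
Your overall strategy is sound and close in spirit to the paper's argument: both exploit the fact that when $p\in M_1$ the image of the bilinear form $II$ lies in a single line $[\nu_1]\subset N_pM$, so the binormal analysis reduces to properties of the scalar form $II_{\nu_1}$. The paper phrases this geometrically (the degenerate directions are the plane $\mathcal{A}ff_p^{\perp}$, and one intersects this with the chosen $E_p$), while you compute directly with $II_\nu = a\,II_{\nu_1}$ for $\nu=a\nu_1+b\nu_2$. Your argument that every tangent direction is asymptotic is correct and essentially identical to the paper's.

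However, the binormal count contains two genuine errors. First, the claim that $II_{\nu_1}$ always has rank $\le 1$ for $p\in M_1$ is false. You are conflating the rank of the second fundamental form (the statement $p\in M_1$ means the $3\times 3$ coefficient matrix has rank $1$, equivalently $\mathrm{Im}\,II$ is one-dimensional) with the rank of the $2\times 2$ symmetric matrix representing $II_{\nu_1}$ on $T_q\tilde{M}$. In the radial line case (normal form $(x,xy,0,0)$) one has $II_{\nu_1}(\mathbf{u},\mathbf{u})=2\alpha\beta$, whose matrix $\bigl(\begin{smallmatrix}0&1\\1&0\end{smallmatrix}\bigr)$ is nondegenerate; in the radial half-line case $(x,a_{20}x^2+y^2,0,0)$ with $a_{20}\neq 0$ the matrix is $\mathrm{diag}(2a_{20},2)$, again rank $2$. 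Second, even within your own setup the description of the binormal set is wrong: you omit that $\nu=b\nu_2$ (the case $a=0$) is \emph{always} binormal since then $II_\nu\equiv 0$, and the set $\{a\nu_1+b\nu_2:a\neq 0\}$ is the complement of a single projective direction, not a single direction.

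The corrected dichotomy, still in your framework, is: if $II_{\nu_1}$ is nondegenerate the unique binormal direction in $E_p$ is $[\nu_2]$ (one direction, but it is $\nu_2$, not $\nu_1$); if $II_{\nu_1}$ is degenerate then every nonzero $\nu\in E_p$ is binormal. In either case the \emph{number} is determined by $II_{\nu_1}$ alone and is independent of the choice of $\nu_2$, hence of $E_p$. With these fixes your algebraic approach becomes a valid alternative to the paper's more geometric intersection argument.
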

\begin{proof}
 Indeed, if $\Delta_{p}$ is a radial half-line or radial line, the degenerate directions are the ones of the plane orthogonal to $\mathcal{A}ff_{p}$. Choosing any plane from the pencil of planes containing $\Delta_{p}$, the intersection of this plane with the one orthogonal to $\mathcal{A}ff_{p}$ gives us one binormal direction. When $\Delta_{p}$ is a point other than $p$, the situation is the same: we just have to take the line through $p$ and $\Delta_{p}$ and the consideration is the same as above. However, when $\Delta_{p}=\{p\}$ all directions of $E_{p}$ are binormals for any choice of $E_{p}$. For the second part it is enough to take $\nu\in N_{p}M$ such that $\nu$ is orthogonal to $\Delta_{p}$, when it is a half-line or a line, or $\nu$ orthogonal to the line containing $p$ and $\Delta_{p}$, when it is a point.
\end{proof}

\begin{rem}\label{obs-quebra-do-normal} Consider $\{\nu_{1},\nu_{2},\nu_{3}\}$ an orthonormal frame of $N_{p}M$ such that $E_{p}=[\nu_{1},\nu_{2}]$ and $E_{p}^{\perp}=[\nu_{3}]$. Hence, we may write $N_{p}M=E_{p}\oplus E_{p}^{\perp}$ and the matrix of the second fundamental form of $M$ will be given by
$$
\left(
  \begin{array}{ccc}
    l_{\nu_{1}} & m_{\nu_{1}} & n_{\nu_{1}} \\
    l_{\nu_{2}} & m_{\nu_{2}} & n_{\nu_{2}} \\
    l_{\nu_{3}} & m_{\nu_{3}} & n_{\nu_{3}} \\
  \end{array}
\right),
$$
where $l_{\nu_{i}}$, $m_{\nu_{i}}$ and $n_{\nu_{i}}$, $i=1,2,3$ are the coefficients corresponding to the orthonormal frame $\{\nu_{1},\nu_{2},\nu_{3}\}$.
\end{rem}

The next result gives a criterion to determine whether a tangent direction $\textbf{u}\in T_{q}\tilde{M}$ is asymptotic or not.

\begin{lem}\label{lema-l.a.}
Let $E_{p}=[\nu_{1},\nu_{2}]$ be as in Remark \ref{obs-quebra-do-normal}.
A non zero tangent direction $\textbf{u}=(\alpha,\beta)\in T_{q}\tilde{M}$ is asymptotic if and only if
$$\left|
    \begin{array}{ccc}
      \beta^{2} & -\alpha\beta & \alpha^{2} \\
      l_{\nu_{1}} & m_{\nu_{1}} & n_{\nu_{1}} \\
      l_{\nu_{2}} & m_{\nu_{2}} & n_{\nu_{2}} \\
    \end{array}
  \right|=0.
$$
\end{lem}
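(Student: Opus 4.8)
The plan is to unwind the definition of asymptotic direction. By definition, a nonzero $\textbf{u}=(\alpha,\beta)\in T_{q}\tilde{M}$ is asymptotic if and only if there is a nonzero $\nu\in E_{p}=[\nu_{1},\nu_{2}]$ with $II_{\nu}(\textbf{u},\textbf{v})=0$ for all $\textbf{v}\in T_{q}\tilde{M}$. Writing $\nu=a\nu_{1}+b\nu_{2}$, the condition $II_{\nu}(\textbf{u},\textbf{v})=0$ for all $\textbf{v}$ is equivalent to $II_{\nu}(\textbf{u},\partial_{x})=0$ and $II_{\nu}(\textbf{u},\partial_{y})=0$ (since $II_{\nu}$ is bilinear and symmetric and $\{\partial_{x},\partial_{y}\}$ spans). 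Using the coefficients $l_{\nu_{i}},m_{\nu_{i}},n_{\nu_{i}}$ from Remark \ref{obs-quebra-do-normal}, these two scalar equations become a homogeneous linear system in $(a,b)$:
\begin{equation*}
\left\{
\begin{array}{l}
(\alpha l_{\nu_{1}}+\beta m_{\nu_{1}})a+(\alpha l_{\nu_{2}}+\beta m_{\nu_{2}})b=0,\\
(\alpha m_{\nu_{1}}+\beta n_{\nu_{1}})a+(\alpha m_{\nu_{2}}+\beta n_{\nu_{2}})b=0.
\end{array}
\right.
\end{equation*}

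The key step is then the observation that a nonzero $\nu$ (i.e. a nonzero $(a,b)$) solving this system exists if and only if the $2\times 2$ coefficient matrix is singular, that is, its determinant vanishes:
\begin{equation*}
(\alpha l_{\nu_{1}}+\beta m_{\nu_{1}})(\alpha m_{\nu_{2}}+\beta n_{\nu_{2}})-(\alpha l_{\nu_{2}}+\beta m_{\nu_{2}})(\alpha m_{\nu_{1}}+\beta n_{\nu_{1}})=0.
\end{equation*}
Expanding this expression in powers of $\alpha$ and $\beta$ gives
$\alpha^{2}(l_{\nu_{1}}m_{\nu_{2}}-l_{\nu_{2}}m_{\nu_{1}})+\alpha\beta(l_{\nu_{1}}n_{\nu_{2}}-l_{\nu_{2}}n_{\nu_{1}})+\beta^{2}(m_{\nu_{1}}n_{\nu_{2}}-m_{\nu_{2}}n_{\nu_{1}})=0$, which is precisely the cofactor expansion along the first row of the claimed determinant
\begin{equation*}
\left|
\begin{array}{ccc}
\beta^{2} & -\alpha\beta & \alpha^{2}\\
l_{\nu_{1}} & m_{\nu_{1}} & n_{\nu_{1}}\\
l_{\nu_{2}} & m_{\nu_{2}} & n_{\nu_{2}}
\end{array}
\right|=0,
\end{equation*}
after matching the $2\times 2$ minors with the correct signs (the minor of $\beta^{2}$ is $m_{\nu_{1}}n_{\nu_{2}}-m_{\nu_{2}}n_{\nu_{1}}$, the minor of $-\alpha\beta$ taken with its sign gives $l_{\nu_{1}}n_{\nu_{2}}-l_{\nu_{2}}n_{\nu_{1}}$, and the minor of $\alpha^{2}$ is $l_{\nu_{1}}m_{\nu_{2}}-l_{\nu_{2}}m_{\nu_{1}}$). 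This completes the equivalence.

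I do not expect a serious obstacle here; the only point requiring a little care is the reduction "$II_{\nu}(\textbf{u},\textbf{v})=0$ for all $\textbf{v}$" $\Longleftrightarrow$ "the two equations with $\textbf{v}=\partial_{x},\partial_{y}$ hold", and the bookkeeping of signs in the cofactor expansion so that the middle entry comes out as $-\alpha\beta$ rather than $\alpha\beta$. One should also note that the statement is vacuous/consistent in the degenerate situation where $E_p$ is only defined up to choice (covered by the earlier lemma on $M_0\cup M_1$), but for the computation itself one simply fixes the frame $\{\nu_1,\nu_2,\nu_3\}$ as in Remark \ref{obs-quebra-do-normal} and the argument goes through verbatim.
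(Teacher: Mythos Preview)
Your proof is correct and follows essentially the same route as the paper: write $\nu=a\nu_{1}+b\nu_{2}$, reduce the condition $II_{\nu}(\textbf{u},\textbf{v})=0$ for all $\textbf{v}$ to a $2\times 2$ homogeneous linear system in $(a,b)$, and observe that a nonzero solution exists iff the coefficient determinant vanishes, which expands to the stated $3\times 3$ determinant. The only cosmetic difference is that you test against the basis vectors $\partial_{x},\partial_{y}$ directly, whereas the paper expands $II_{\nu}(\textbf{u},\textbf{v})$ for a general $\textbf{v}=(\bar{\alpha},\bar{\beta})$ and then reads off the coefficients of $\bar{\alpha}$ and $\bar{\beta}$.
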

\begin{proof}
Consider a tangent direction $\textbf{v}=(\bar{\alpha},\bar{\beta})\in T_{q}\tilde{M}$ and a non zero normal vector
$\nu=a\nu_{1}+b\nu_{2}\in E_{p}$. There is no need of a component in the $\nu_{3}$ direction, since $\nu\in E_{p}$ (see Remark \ref{obs-quebra-do-normal}). Thus,
$$\begin{array}{ll}
II(\textbf{u},\textbf{v}) & =II(\alpha\partial_{x}+\beta\partial_{y},\bar{\alpha}\partial_{x}+\bar{\beta}\partial_{y}) \\
        & =\alpha\bar{\alpha}II(\partial_{x},\partial_{x})+(\alpha\bar{\beta}+\beta\bar{\alpha})II(\partial_{x},\partial_{y})+\beta\bar{\beta}II(\partial_{y},\partial_{y})
\end{array}
$$
and
$$\begin{array}{ll}
II_{\nu}(\textbf{u},\textbf{v}) & =\langle II(\textbf{u},\textbf{v}),a\nu_{1}+b\nu_{2}\rangle \\
              & =a\langle II(\textbf{u},\textbf{v}),\nu_{1}\rangle+b\langle II(\textbf{u},\textbf{v}),\nu_{2}\rangle \\
              & =a[\alpha\bar{\alpha}l_{\nu_{1}}+(\alpha\bar{\beta}+\beta\bar{\alpha})m_{\nu_{1}}+\beta\bar{\beta}n_{\nu_{1}}] \\ &\ +b[\alpha\bar{\alpha}l_{\nu_{2}}+(\alpha\bar{\beta}+\beta\bar{\alpha})m_{\nu_{2}}+\beta\bar{\beta}n_{\nu_{2}}].
\end{array}
$$
Rewriting:
$$\begin{array}{ll}
II_{\nu}(\textbf{u},\textbf{v}) & =\bar{\alpha}[a(\alpha l_{\nu_{1}}+\beta m_{\nu_{1}})+b(\alpha l_{\nu_{2}}+\beta m_{\nu_{2}})] \\
              & +\bar{\beta}[a(\alpha m_{\nu_{1}}+\beta n_{\nu_{1}})+b(\alpha m_{\nu_{2}}+\beta n_{\nu_{2}})].
\end{array}
$$
In order for $\textbf{u}\in T_{q}\tilde{M}$ to be an asymptotic direction, we must show that $II_{\nu}(\textbf{u},\textbf{v})=0$.
The last equality above must be satisfied for all $\textbf{v}=(\bar{\alpha},\bar{\beta})\in T_{q}\tilde{M}$, so
$$\left\{\begin{array}{l}
(\alpha l_{\nu_{1}}+\beta m_{\nu_{1}})a+(\alpha l_{\nu_{2}}+\beta m_{\nu_{2}})b=0 \\
(\alpha m_{\nu_{1}}+\beta n_{\nu_{1}})a+(\alpha m_{\nu_{2}}+\beta n_{\nu_{2}})b=0.
\end{array}\right.
$$
Once $a=b=0$ is not a solution, we will have more solutions if and only if
$$
\det\left(
      \begin{array}{cc}
        \alpha l_{\nu_{1}}+\beta m_{\nu_{1}} & \alpha l_{\nu_{2}}+\beta m_{\nu_{2}} \\
        \alpha m_{\nu_{1}}+\beta n_{\nu_{1}} & \alpha m_{\nu_{2}}+\beta n_{\nu_{2}} \\
      \end{array}
    \right)=0,
$$
that is,
$$\alpha^{2}(l_{\nu_{1}}m_{\nu_{2}}-l_{\nu_{2}}m_{\nu_{1}})+\alpha\beta(l_{\nu_{1}}n_{\nu_{2}}-l_{\nu_{2}}n_{\nu_{1}})+\beta^{2}(m_{\nu_{1}}n_{\nu_{2}}-m_{\nu_{2}}n_{\nu_{1}})=0,$$
which is equivalent to what we wanted.
\end{proof}



We know that the curvature parabola $\Delta_{p}$ can be parametrised as in (\ref{parabola}). Each parameter $y\in\mathbb{R}$ corresponds to a unit tangent direction $\textbf{u}=\partial_{x}+y\partial_{y}=(1,y)\in C_{q}$. We shall denote by $y_{\infty}$ the parameter corresponding to the null tangent direction given by $\textbf{u}=\partial_{y}=(0,1)$. For each possibility of $\Delta_{p}$ we define $\eta(y_{\infty})$: when $\Delta_{p}$ is a line or a half-line $\eta(y_{\infty})=\eta'(y_{\infty})=\eta'(y)/|\eta'(y)|$ where $y>0$ is any value such that $\eta'(y)\neq0$. When $\Delta_{p}$ degenerates into a point $\nu$, $\eta(y_{\infty})=\nu$ and $\eta'(y_{\infty})=0$. Finally, in the case where $\Delta_{p}$ is a non-degenerate parabola, $\eta(y_{\infty})$ and $\eta'(y_{\infty})$ are not defined. The previous construction was first considered for corank $1$ surfaces in $\mathbb{R}^{3}$ (see \cite{MartinsBallesteros}).

\begin{lem}\label{lemma-collinear}
A tangent direction in $T_{q}\tilde{M}$ given by a parameter $y\in\mathbb{R}\cup[y_{\infty}]$ is asymptotic if and only if,
$\pi(\eta(y))$ and $\pi(\eta'(y))$ are collinear (if they are defined), where $\pi:N_{p}M\rightarrow E_{p}$ is the canonical projection.
\end{lem}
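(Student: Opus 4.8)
The plan is to reduce the statement to a direct comparison between the algebraic criterion for asymptotic directions (Lemma \ref{lema-l.a.}) and the collinearity condition on the projected parabola, handling separately the case of a finite parameter $y\in\mathbb{R}$ and the case $y=y_{\infty}$. First I would fix an orthonormal frame $\{\nu_{1},\nu_{2},\nu_{3}\}$ of $N_{p}M$ adapted to $E_{p}=[\nu_{1},\nu_{2}]$ as in Remark \ref{obs-quebra-do-normal}, so that $\pi:N_{p}M\to E_{p}$ is just the deletion of the $\nu_{3}$-coordinate. With the parametrisation $\eta(y)=\sum_{i=1}^{3}(l_{\nu_{i}}+2m_{\nu_{i}}y+n_{\nu_{i}}y^{2})\nu_{i}$ from (\ref{parabola}), we have $\pi(\eta(y))=(l_{\nu_{1}}+2m_{\nu_{1}}y+n_{\nu_{1}}y^{2},\,l_{\nu_{2}}+2m_{\nu_{2}}y+n_{\nu_{2}}y^{2})$ and $\pi(\eta'(y))=2(m_{\nu_{1}}+n_{\nu_{1}}y,\,m_{\nu_{2}}+n_{\nu_{2}}y)$ in the coordinates $(\nu_{1},\nu_{2})$.

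The core computation is then to expand the $2\times2$ determinant
$$
\det\begin{pmatrix} l_{\nu_{1}}+2m_{\nu_{1}}y+n_{\nu_{1}}y^{2} & l_{\nu_{2}}+2m_{\nu_{2}}y+n_{\nu_{2}}y^{2} \\ m_{\nu_{1}}+n_{\nu_{1}}y & m_{\nu_{2}}+n_{\nu_{2}}y \end{pmatrix}
$$
and check that, up to a nonzero scalar, it equals the value of the $3\times3$ determinant in Lemma \ref{lema-l.a.} evaluated at the tangent direction $\textbf{u}=(1,y)$, i.e. at $(\alpha,\beta)=(1,y)$. Indeed that $3\times3$ determinant expands to $(l_{\nu_{1}}m_{\nu_{2}}-l_{\nu_{2}}m_{\nu_{1}})+y(l_{\nu_{1}}n_{\nu_{2}}-l_{\nu_{2}}n_{\nu_{1}})+y^{2}(m_{\nu_{1}}n_{\nu_{2}}-m_{\nu_{2}}n_{\nu_{1}})$, and a short expansion shows the $2\times2$ determinant above equals exactly this polynomial in $y$. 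Hence $\textbf{u}=(1,y)$ is asymptotic (by Lemma \ref{lema-l.a.}) if and only if $\pi(\eta(y))$ and $\pi(\eta'(y))$ are linearly dependent. This settles the case of finite $y$.

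For $y=y_{\infty}$, corresponding to $\textbf{u}=(0,1)$, Lemma \ref{lema-l.a.} with $(\alpha,\beta)=(0,1)$ gives the condition $m_{\nu_{1}}n_{\nu_{2}}-m_{\nu_{2}}n_{\nu_{1}}=0$, which is precisely the vanishing of the leading ($y^2$) coefficient of the polynomial above, i.e. the collinearity of the vectors $(n_{\nu_{1}},n_{\nu_{2}})$ and $(m_{\nu_{1}},m_{\nu_{2}})$. I then match this with the definition of $\eta(y_{\infty})$ and $\eta'(y_{\infty})$: when $\Delta_{p}$ is a non-degenerate parabola, $y_{\infty}$ is simply not a parameter of $C_{q}$ producing a value of $\eta$, and one checks the $3\times3$ criterion genuinely forbids $(0,1)$ being asymptotic in that case (this is consistent with $\eta(y_{\infty})$, $\eta'(y_{\infty})$ being undefined); when $\Delta_{p}$ is a (half-)line, $\pi(\eta'(y_{\infty}))$ is by definition the direction of the line and $\pi(\eta(y_{\infty}))=\pi(\eta'(y_{\infty}))$, so collinearity holds iff the projected line is degenerate in the expected way, matching the determinant condition; when $\Delta_{p}$ is a point $\nu$, $\eta'(y_{\infty})=0$ so the collinearity condition is vacuously satisfied, consistent with the earlier observation that every direction is asymptotic in $M_{1}\cup M_{0}$.

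The main obstacle I expect is not the algebraic identity, which is routine, but the careful bookkeeping of the boundary case $y=y_{\infty}$ across the different degeneracy types of $\Delta_{p}$: one has to make sure the convention for $\eta(y_{\infty})$, $\eta'(y_{\infty})$ introduced just before the lemma is compatible with the limit behaviour of $\pi(\eta(y))$, $\pi(\eta'(y))$ as $y\to\infty$ after rescaling by $y^{-2}$ and $y^{-1}$ respectively, and to phrase the statement ``if they are defined'' so that the non-degenerate parabola case is correctly excluded. I would therefore present the finite-$y$ computation first as the main body of the proof, and then dispatch the $y_{\infty}$ case by a separate short paragraph invoking the definitions and Lemma \ref{lemma/tipo.de.pontos} to know which configuration of $\Delta_{p}$ we are in.
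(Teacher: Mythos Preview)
Your proposal is correct and follows essentially the same route as the paper: compute $\pi(\eta(y))$, $\pi(\eta'(y))$ in an adapted frame and check that their linear dependence condition reduces to the polynomial $(l_{\nu_{1}}m_{\nu_{2}}-l_{\nu_{2}}m_{\nu_{1}})+y(l_{\nu_{1}}n_{\nu_{2}}-l_{\nu_{2}}n_{\nu_{1}})+y^{2}(m_{\nu_{1}}n_{\nu_{2}}-m_{\nu_{2}}n_{\nu_{1}})$ from Lemma~\ref{lema-l.a.} at $\textbf{u}=(1,y)$, then treat $y_{\infty}$ separately. The only cosmetic difference is that the paper packages the collinearity test as a $4\times4$ determinant $\det(\textbf{e},\pi(\eta(y)),\pi(\eta'(y)),\nu_{3})$ rather than your $2\times2$ determinant, and for $y_{\infty}$ the paper argues more briefly: since by construction $\eta(y_{\infty})=\eta'(y_{\infty})$ whenever they are defined, collinearity is automatic, and this occurs exactly when $\Delta_{p}$ degenerates, i.e.\ when $m_{\nu_{1}}n_{\nu_{2}}-m_{\nu_{2}}n_{\nu_{1}}=0$, which is Lemma~\ref{lema-l.a.} at $\textbf{u}=(0,1)$. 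Your case-by-case treatment of $y_{\infty}$ reaches the same conclusion; just note that in the (half-)line case both the collinearity and the determinant condition hold \emph{unconditionally}, so there is no genuine ``iff'' to verify there.
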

\begin{proof}
The proof will be split in two cases.
\begin{itemize}
\item[(i)] Consider $y\in\mathbb{R}$. By (\ref{parabola}) and Remark \ref{obs-quebra-do-normal} we have
$$\eta(y)=\sum_{i=1}^{3}(l_{\nu_{i}}+2m_{\nu_{i}}y+n_{\nu_{i}}y^{2})\nu_{i}\ \mbox{and}\ \eta'(y)=\sum_{i=1}^{3}(2m_{\nu_{i}}+2n_{\nu_{i}}y)\nu_{i},$$
in the same conditions as before. Thus,
$$\pi(\eta(y))=\sum_{i=1}^{2}(l_{\nu_{i}}+2m_{\nu_{i}}y+n_{\nu_{i}}y^{2})\nu_{i}\ \mbox{and}\ \pi(\eta'(y))=\sum_{i=1}^{2}(2m_{\nu_{i}}+2n_{\nu_{i}}y)\nu_{i}$$
and these vectors are collinear if and only if the following determinant vanishes: $\det(\textbf{e},\pi(\eta(y)),\pi(\eta'(y)),\nu_{3})=$
$$\begin{array}{cc}
\hspace{-2cm} & =\det\left(                               \begin{array}{cccc}
1 & 0 & 0 & 0 \\
 0 & l_{\nu_{1}}+2m_{\nu_{1}}y+n_{\nu_{1}}y^{2} & l_{\nu_{2}}+2m_{\nu_{2}}y+n_{\nu_{2}}y^{2} & 0 \\
 0 & 2m_{\nu_{1}}+2n_{\nu_{1}}y & 2m_{\nu_{2}}+2n_{\nu_{2}}y & 0 \\
 0 & 0 & 0 & 1 \\
 \end{array}
 \right) \\
 & =2[(l_{\nu_{1}}m_{\nu_{2}}-l_{\nu_{2}}m_{\nu_{1}})+(l_{\nu_{1}}n_{\nu_{2}}-l_{\nu_{2}}n_{\nu_{1}})y+(m_{\nu_{1}}n_{\nu_{2}}-m_{\nu_{2}}n_{\nu_{1}})y^{2}], \end{array}
$$
where $\textbf{e}=(1,0,0,0)\in T_{p}M$. Since the equality above is the same as the one of Lemma \ref{lema-l.a.} for $\textbf{u}=(1,y)\in C_{q}$, the first case is done.
\item[(ii)] On the other hand, when $y=y_{\infty}$, $\eta(y)$ and $\eta'(y)$ are already collinear, if they are defined. Hence, they are collinear if and only if $\Delta_{p}$ is a degenerate parabola, which is equivalent to
$m_{\nu_{1}}n_{\nu_{2}}-m_{\nu_{2}}n_{\nu_{1}}=0$ (See Theorem \ref{teo.2jato/parabola}). Lastly, notice that the equation above is obtained by Lemma \ref{lema-l.a.} with $\textbf{u}=(0,1)$. There is no need to project the vector in $E_{p}$, since $\mathcal{A}ff_{p}\subset E_{p}$.
\end{itemize}
Therefore we conclude the proof.
\end{proof}

Each parameter $y\in\mathbb{R}\cup[y_{\infty}]$ corresponding to an asymptotic direction $\textbf{u}\in T_{q}\tilde{M}$ will also be called an asymptotic direction. Using this nomenclature we can count the number of asymptotic directions for each type of curvature parabola $\Delta_{p}$ (see Figure \ref{asymptotic}).

\begin{itemize}
\item[(i)] When $\Delta_{p}$ is a non degenerate parabola, we have $0$, $1$ or $2$ asymptotic directions according to the position of $p$: ``inside", on or ``outside" of $\pi(\Delta_{p})$, respectively, where  $\pi:N_{p}M\rightarrow E_{p}$ is as before;
\item[(ii)] When $\Delta_{p}$ is a half-line, there are $2$ asymptotic directions $\{y_{\nu},y_{\infty}\}$, with $\eta(y_{\nu})$ being its vertex if $\Delta_{p}$ is not radial and all directions are asymptotic otherwise;
\item[(iii)] When $\Delta_{p}$ is a line, $y_{\infty}$ is the only asymptotic direction if the line is not radial and if it is, all directions are asymptotic;
\item[(iv)] When $\Delta_{p}$ is a point, every $y\in\mathbb{R}\cup[y_{\infty}]$ is an asymptotic direction.
\end{itemize}

\begin{figure}[h!]
\begin{center}
\includegraphics[scale=0.5]{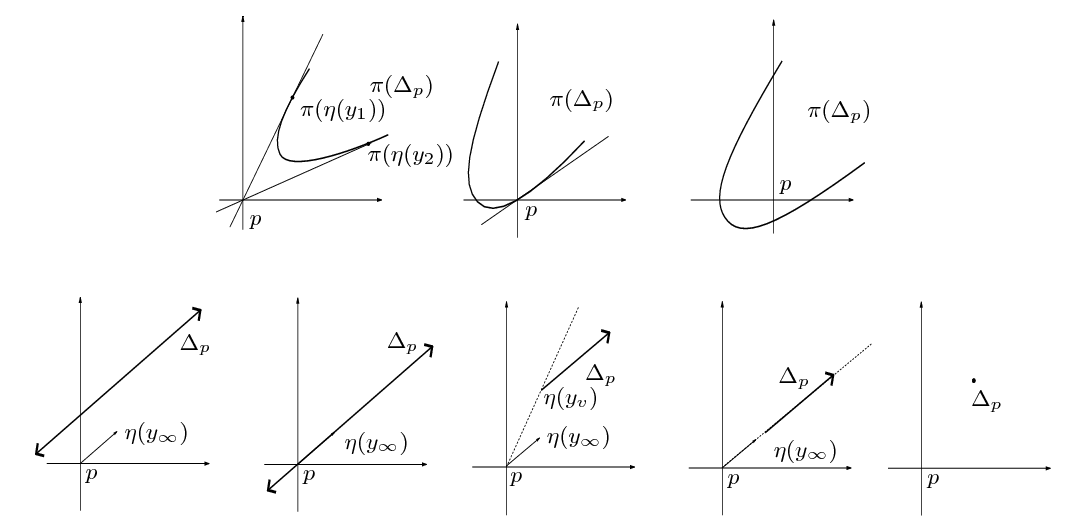}
\caption{Possibilities for $\Delta_{p}$ and the number of asymptotic directions}
\label{asymptotic}
\end{center}
\end{figure}


The following result is a consequence of the previous ones.

\begin{coro}
A normal direction $\nu\in E_{p}$ is a binormal direction if and only if there is an asymptotic direction $y\in\mathbb{R}\cup[y_{\infty}]$ such that
$\nu$ is orthogonal to the subspace spanned by $\pi(\eta(y))$ and $\pi(\eta'(y))$, where $\pi:N_{p}M\rightarrow E_{p}$ is the orthogonal projection.
\end{coro}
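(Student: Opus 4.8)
The plan is to unwind both sides of the equivalence into the same linear system, reusing the computation already carried out in the proof of Lemma \ref{lema-l.a.} together with Lemma \ref{lemma-collinear}. Fix an orthonormal frame $\{\nu_{1},\nu_{2},\nu_{3}\}$ of $N_{p}M$ with $E_{p}=[\nu_{1},\nu_{2}]$ and $E_{p}^{\perp}=[\nu_{3}]$ as in Remark \ref{obs-quebra-do-normal}, and write a nonzero vector $\nu=a\nu_{1}+b\nu_{2}\in E_{p}$. By definition $\nu$ is binormal exactly when there is an asymptotic direction $\textbf{u}\in T_{q}\tilde M$ with $II_{\nu}(\textbf{u},\textbf{v})=0$ for all $\textbf{v}$, and from the proof of Lemma \ref{lema-l.a.} this last condition, for $\textbf{u}=(1,y)$, is equivalent to the system $(l_{\nu_{1}}+y\,m_{\nu_{1}})a+(l_{\nu_{2}}+y\,m_{\nu_{2}})b=0$ and $(m_{\nu_{1}}+y\,n_{\nu_{1}})a+(m_{\nu_{2}}+y\,n_{\nu_{2}})b=0$.

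The key observation is the identity $l_{\nu_{i}}+2m_{\nu_{i}}y+n_{\nu_{i}}y^{2}=(l_{\nu_{i}}+m_{\nu_{i}}y)+y(m_{\nu_{i}}+n_{\nu_{i}}y)$. Using the parametrisation $(\ref{parabola})$ and the expressions for $\pi(\eta(y))$ and $\pi(\eta'(y))$ computed in the proof of Lemma \ref{lemma-collinear}, this identity yields $\tfrac12\langle\nu,\pi(\eta'(y))\rangle=(m_{\nu_{1}}+y\,n_{\nu_{1}})a+(m_{\nu_{2}}+y\,n_{\nu_{2}})b$ and $\langle\nu,\pi(\eta(y))\rangle=\big[(l_{\nu_{1}}+y\,m_{\nu_{1}})a+(l_{\nu_{2}}+y\,m_{\nu_{2}})b\big]+y\big[(m_{\nu_{1}}+y\,n_{\nu_{1}})a+(m_{\nu_{2}}+y\,n_{\nu_{2}})b\big]$. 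Hence the above system holds if and only if $\langle\nu,\pi(\eta(y))\rangle=\langle\nu,\pi(\eta'(y))\rangle=0$, that is, $\nu$ is orthogonal to the subspace of $E_{p}$ spanned by $\pi(\eta(y))$ and $\pi(\eta'(y))$. Since the existence of the nonzero solution $(a,b)$ forces the determinant in Lemma \ref{lema-l.a.} to vanish, such a $y$ is automatically asymptotic, and conversely; this proves the statement for parameters $y\in\mathbb{R}$.

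For the remaining parameter $y_{\infty}$, corresponding to $\textbf{u}=(0,1)$, I would redo the computation of Lemma \ref{lema-l.a.} with $(\alpha,\beta)=(0,1)$, getting the system $m_{\nu_{1}}a+m_{\nu_{2}}b=0$, $n_{\nu_{1}}a+n_{\nu_{2}}b=0$; this is equivalent to $\nu$ being orthogonal to every vector $\sum_{i=1}^{2}(m_{\nu_{i}}+y\,n_{\nu_{i}})\nu_{i}$, hence to $\pi(\eta'(y))$ for any $y>0$ with $\eta'(y)\neq 0$. Recalling that $y_{\infty}$ is asymptotic only when $\Delta_{p}$ is a half-line, a line or a point, so that $\mathcal{A}ff_{p}\subset E_{p}$ and $\eta(y_{\infty})=\eta'(y_{\infty})$ (or $\eta'(y_{\infty})=0$), this is exactly orthogonality of $\nu$ to $\mathrm{span}\big(\pi(\eta(y_{\infty})),\pi(\eta'(y_{\infty}))\big)$. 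Combining the two cases gives the corollary. The only delicate point is this bookkeeping at $y_{\infty}$ and across the degenerate shapes of $\Delta_{p}$, where one must check that the ad hoc definitions of $\eta(y_{\infty})$, $\eta'(y_{\infty})$ made before Lemma \ref{lemma-collinear} are consistent with the orthogonality condition; everything else is a direct translation of the linear algebra in Lemma \ref{lema-l.a.}.
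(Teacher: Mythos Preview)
Your proposal is correct and is exactly the elaboration the paper leaves implicit: in the paper the corollary is stated with no proof at all, only the remark ``The following result is a consequence of the previous ones.'' Your argument spells out precisely how Lemmas~\ref{lema-l.a.} and~\ref{lemma-collinear} combine, by identifying the linear system $(l_{\nu_i}+y\,m_{\nu_i})a+\dots=0$, $(m_{\nu_i}+y\,n_{\nu_i})a+\dots=0$ with the pair of orthogonality conditions $\langle\nu,\pi(\eta(y))\rangle=\langle\nu,\pi(\eta'(y))\rangle=0$; this is the same route the paper intends.

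One comment on the $y_\infty$ case, which you rightly flag as the only delicate part: to pass from the system $m_{\nu_1}a+m_{\nu_2}b=0$, $n_{\nu_1}a+n_{\nu_2}b=0$ to orthogonality with a single direction $\eta(y_\infty)=\eta'(y_\infty)$ you are implicitly using that $(m_{\nu_1},m_{\nu_2})$ and $(n_{\nu_1},n_{\nu_2})$ are linearly dependent. This is exactly the degeneracy condition $m_{\nu_1}n_{\nu_2}-m_{\nu_2}n_{\nu_1}=0$ from part~(ii) of the proof of Lemma~\ref{lemma-collinear}, so it holds whenever $y_\infty$ is asymptotic; it would strengthen your write-up to make this step explicit.
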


\begin{definition}
Given a binormal direction $\nu\in E_{p}$, the hyperplane through $p$ and orthogonal to $\nu$ is called an \emph{osculating hyperplane} to $M$ at $p$.
\end{definition}

The number of binormal directions and of osculating hyperplanes
depends on the type of the curvature parabola. Using the Corollary
above, we can calculate this number. If $\Delta_{p}$ is a non
degenerate parabola, we have one $0$, $1$ or $2$ binormal
directions, one for each asymptotic direction. When $\Delta_{p}$ is
a half-line, there are three possibilities: if the line that
contains $\Delta_{p}$ is not radial, there are $2$ binormal
directions. Otherwise, we have one if the vertex of the parabola is
not $p$ and if it is, all directions are binormal. When $\Delta_{p}$
is a line or a point different from $p$, we have one binormal
direction. Finally, if $\Delta_{p}=\{p\}$ all directions are
binormal.

\begin{definition}\label{pointstypes}
Given a surface $M\subset\mathbb{R}^{4}$ with corank $1$ singularity at $p\in M$. The point $p$ is called:
\begin{itemize}
    \item[(i)] \emph{elliptic} if there are no asymptotic directions at $p$;
    \item[(ii)] \emph{hyperbolic} if there are two asymptotic directions at $p$;
    \item[(iii)] \emph{parabolic} if there is one asymptotic direction at $p$;
    \item[(iv)] \emph{inflection} if there are an infinite number of asymptotic directions at $p$.
\end{itemize}
\end{definition}

From Definition \ref{pointstypes} and the previous counting of the number of asymptotic directions, $p$ is parabolic, hyperbolic or elliptic if and only if $p\in M_{2}\cup M_{3}$. On the other hand, $p$ is an inflection point if and only if $p\in M_{1}\cup M_{0}$.

The next result proves that the geometry of a corank $1$ surface in $\mathbb{R}^{4}$ is strongly related to the geometry of the associated regular surface $S\subset\mathbb{R}^{4}$ as defined previously.

\begin{teo}\label{teorelation}
Let $M\subset\mathbb{R}^{4}$ be a surface with corank $1$
singularity at $p\in M$ and $S\subset\mathbb{R}^{4}$ the regular
surface associated to $M$.
\begin{itemize}
    \item[(i)] A direction $\textbf{u}\in T_{q}\tilde{M}$ is an asymptotic direction of $M$ if and only if it is also an asymptotic direction of the associated regular surface $S\subset\mathbb{R}^{4}$;
   \item[(ii)] A direction $\nu\in N_{p}M$ is a binormal direction of $M$ if and only if $\texttt{p}\circ
   \xi^{-1}(\nu)\in N_{\texttt{p}\circ\xi^{-1}(p)}S$ is a binormal direction of $S$.
    \item[(iii)] The point $p$ is an elliptic/hyperbolic/parabolic/inflection point if and only if $\texttt{p}\circ\xi^{-1}(p)\in S$ is an elliptic/hyperbolic/parabolic/inflection point, respectively.
\end{itemize}
\end{teo}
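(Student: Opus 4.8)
The plan is to reduce all three statements to a single computation comparing the second fundamental form of $M$ at $p$ with that of $S$ at $\texttt{p}\circ\xi^{-1}(p)$, and then invoke the characterizations already established. First I would fix the concrete local models supplied by Lemma \ref{lemma2jato2} and Example \ref{ex.construcao}: after coordinate changes in the source and isometries in the target we may take $M$ locally as $f(x,y)=(x,f_2(x,y),f_3(x,y),f_4(x,y))$ with $f_i\in\mathcal M_2^2$, with $E_p$ the $YZ$-plane, $N\subset\mathbb R^5$ given by $i(x,y)=(x,y,f_2,f_3,f_4)$, and $S$ given by $(x,y)\mapsto(x,y,f_2(x,y),f_3(x,y))$. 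The key observation is that the second fundamental form of $M$ at $p$, as a quadratic map into $N_pM$, and the second fundamental form of $S$ at the corresponding point, as a quadratic map into $N S$, agree after the identification $\texttt{p}\circ\xi^{-1}:N_pM\supset E_p\to N S$ is applied to the $E_p$-components: both are read off from the Hessians $(f_2)_{xx},(f_2)_{xy},(f_2)_{yy}$, $(f_3)_{xx},(f_3)_{xy},(f_3)_{yy}$ at the origin. Indeed $S$ is regular (the first two coordinates are $x,y$), so its second fundamental form is the projection of $(i_{xx},i_{xy},i_{yy})$ onto $NS=[\,\partial_Z\text{-part},\partial_W\text{-part}\,]$ restricted to the $f_2,f_3$ coordinates, which is exactly the $E_p$-block $\left(\begin{smallmatrix} l_{\nu_1}&m_{\nu_1}&n_{\nu_1}\\ l_{\nu_2}&m_{\nu_2}&n_{\nu_2}\end{smallmatrix}\right)$ of the matrix of $II$ for $M$ described in Remark \ref{obs-quebra-do-normal}.

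With this identification in hand, part (i) is immediate: by Lemma \ref{lema-l.a.} a direction $\textbf{u}=(\alpha,\beta)$ is asymptotic for $M$ iff the $3\times 3$ determinant built from $(\beta^2,-\alpha\beta,\alpha^2)$ and the two rows $(l_{\nu_1},m_{\nu_1},n_{\nu_1})$, $(l_{\nu_2},m_{\nu_2},n_{\nu_2})$ vanishes; by the regular-surface BDE (\ref{BDE}) of Lemma \cite{Livro}, applied to $S$ with its two second-fundamental-form rows, $\textbf{u}$ is asymptotic for $S$ iff the same determinant vanishes. Hence the asymptotic directions coincide. For part (ii), $\nu\in E_p$ is binormal for $M$ iff there is an asymptotic $\textbf{u}$ with $\nu\perp\mathrm{span}(\pi(\eta(\textbf{u})),\pi(\eta'(\textbf{u})))$ (the Corollary following Lemma \ref{lemma-collinear}); translating $\pi(\eta(\textbf{u}))$ and $\pi(\eta'(\textbf{u}))$ through $\texttt{p}\circ\xi^{-1}$ gives exactly $II$-values of $S$ along $\textbf{u}$, so the condition becomes that $\texttt{p}\circ\xi^{-1}(\nu)$ is orthogonal to $\eta_S(\textbf{u})$ and $\eta_S'(\textbf{u})$ in $NS$, which by the asymptotic/binormal characterization for regular surfaces in $\mathbb R^4$ (Subsection \ref{supregularesR4}) says $\texttt{p}\circ\xi^{-1}(\nu)$ is binormal for $S$; and $\nu\notin E_p$ is never binormal for either surface, since $\pi(\eta),\pi(\eta')$ span $E_p$ generically and a normal with a nonzero $E_p^\perp$-component cannot annihilate $II_S$. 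Part (iii) then follows formally from (i) together with the classification of points in Definition \ref{pointstypes}, since the number of asymptotic directions of $M$ at $p$ equals that of $S$ at $\texttt{p}\circ\xi^{-1}(p)$.

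The main obstacle I anticipate is the bookkeeping in part (ii), specifically making the identification $\texttt{p}\circ\xi^{-1}\colon E_p\to NS$ precise and verifying that it is an isometry carrying the relevant $II$-data of $M$ to that of $S$ — one must check that projecting $N$ along the tangent direction that creates $M$ and then along $E_p^\perp$ to land in $T_{\xi^{-1}(p)}N\oplus\xi^{-1}(E_p)$ does not distort the normal geometry in the $E_p$-plane, which is where the ``same second fundamental form'' principle stated at the start of Section 4 does the real work. A secondary, more technical point is handling the inflection/degenerate cases ($p\in M_0\cup M_1$), where $\eta(y_\infty)$ enters and where $\pi(\eta),\pi(\eta')$ may fail to span $E_p$; there one falls back on the explicit normal forms of Lemma \ref{lemma2jato2} and the preceding Proposition guaranteeing independence from the choice of $E_p$, and checks directly that every $\textbf{u}$ is asymptotic for both $M$ and $S$. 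Once these identifications are pinned down, each of the three equivalences is a one-line consequence of an already-proved lemma, so I would present the proof as: (1) fix models, (2) prove the $II$-identification, (3) deduce (i) from Lemma \ref{lema-l.a.} and (\ref{BDE}), (4) deduce (ii) from the Corollary and the regular-surface binormal criterion, (5) deduce (iii) from (i) and Definition \ref{pointstypes}.
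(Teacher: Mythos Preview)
Your proposal is correct and follows essentially the same route as the paper's proof: the paper simply records that (i) and (ii) ``follow directly from the definitions and from Lemma \ref{lema-l.a.}'' and that (iii) ``follows immediately from Definition \ref{pointstypes} and the first item,'' which is exactly your plan (3)--(5). Your steps (1)--(2), fixing the local models from Example \ref{ex.construcao} and identifying the $E_p$-block of $II$ for $M$ with the full second fundamental form of $S$, are precisely the content the paper leaves implicit in the phrase ``from the definitions''; you have unpacked it, but not diverged from it.
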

\begin{proof}
The first two items follow directly from the definitions and from Lemma \ref{lema-l.a.}.
The third part follows immediately from Definition \ref{pointstypes} and the first item.
\end{proof}


In \cite{Rieger}, the authors provide a classification of $\mathcal{A}$-simple map germs $f:(\mathbb{R}^{2},0)\rightarrow(\mathbb{R}^{4},0)$.
The singularity $I_{k}$, $k\geqslant1$, given by the $\mathcal{A}$-normal form $(x,y)\mapsto(x,xy,y^{2},y^{2k+1})$ has the following property: every germ $\mathcal{A}$-equivalent to it prarametrises a corank $1$ surface in $\mathbb{R}^{4}$ such that the corresponding curvature parabola is a non degenerate parabola. Moreover, $I_{k}$ are the only singularities having this property. Hence, every map germ $\mathcal{A}$-equivalent to $I_{k}$ is $\mathcal{R}^{2}\times\mathcal{O}(4)$-equivalent to the first normal form in Lemma \ref{lemma2jato2}.

\begin{prop}
Consider the $\mathcal{R}^{2}\times\mathcal{O}(4)$ normal form of the singularity $I_{k}$ given by $f:(\mathbb{R}^{2},0)\rightarrow(\mathbb{R}^{4},0)$ where $f(x,y)=(x,xy+p(x,y),b_{20}x^{2}+b_{11}xy+b_{02}y^{2}+q(x,y),c_{20}x^{2}+r(x,y))$ with $b_{02}>0$ and $p,q,r\in\mathcal{M}_{2}^{3}$ (see Theorem \ref{lemma2jato2}). Then, the singularity $I_{k}$ is hyperbolic, parabolic or elliptic if and only if $b_{20}$ is positive, zero or negative, respectively.
\end{prop}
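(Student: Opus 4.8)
The plan is to read off the second fundamental form of $M$ at $p$ directly from the given normal form and then count asymptotic directions by means of the determinant criterion of Lemma~\ref{lema-l.a.}. To this end, first observe that since $p,q,r\in\mathcal{M}_{2}^{3}$ all their second-order partial derivatives vanish at the origin, hence
$$f_{xx}(0)=(0,0,2b_{20},2c_{20}),\qquad f_{xy}(0)=(0,1,b_{11},0),\qquad f_{yy}(0)=(0,0,2b_{02},0).$$
As in the proof of Theorem~\ref{teo.2jato/parabola}, $T_{p}M=[\textbf{e}_{1}]$ and $N_{p}M=[\textbf{e}_{2},\textbf{e}_{3},\textbf{e}_{4}]$, and the three vectors above already lie in $N_{p}M$, so the orthogonal projection fixes them. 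Therefore, with respect to the orthonormal frame $\{\textbf{e}_{2},\textbf{e}_{3},\textbf{e}_{4}\}$ of $N_{p}M$, the matrix of the second fundamental form of $M$ at $p$ is
$$\left(\begin{array}{ccc} 0 & 1 & 0 \\ 2b_{20} & b_{11} & 2b_{02} \\ 2c_{20} & 0 & 0 \end{array}\right),$$
and $\Delta_{p}$ is parametrised by $\eta(y)=(0,\,2y,\,2b_{20}+2b_{11}y+2b_{02}y^{2},\,2c_{20})$. Since $I_{k}$ always gives a non-degenerate parabola (as recalled just before the statement), this curve spans an affine plane parallel to $[\textbf{e}_{2},\textbf{e}_{3}]$, whence $E_{p}=[\textbf{e}_{2},\textbf{e}_{3}]$ and $E_{p}^{\perp}=[\textbf{e}_{4}]$; thus in the notation of Remark~\ref{obs-quebra-do-normal} one takes $\nu_{1}=\textbf{e}_{2}$, $\nu_{2}=\textbf{e}_{3}$, $\nu_{3}=\textbf{e}_{4}$.

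Then I would apply Lemma~\ref{lema-l.a.}: a nonzero direction $\textbf{u}=(\alpha,\beta)\in T_{q}\tilde{M}$ is asymptotic if and only if
$$\left|\begin{array}{ccc} \beta^{2} & -\alpha\beta & \alpha^{2} \\ 0 & 1 & 0 \\ 2b_{20} & b_{11} & 2b_{02} \end{array}\right|=2b_{02}\beta^{2}-2b_{20}\alpha^{2}=0.$$
Because $b_{02}>0$, the null direction $(0,1)$ (that is, $y_{\infty}$) is not a solution, and for $\textbf{u}=(1,y)$ with $y\in\mathbb{R}$ the condition becomes $y^{2}=b_{20}/b_{02}$. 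Consequently there are exactly two asymptotic directions when $b_{20}>0$, exactly one (namely $y=0$) when $b_{20}=0$, and none when $b_{20}<0$. By Lemma~\ref{lemma/tipo.de.pontos} the point $p$ belongs to $M_{2}\cup M_{3}$, hence is not an inflection point, and by Definition~\ref{pointstypes} these three cases correspond precisely to $p$ being hyperbolic, parabolic and elliptic, respectively; this proves the equivalence.

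The computation carries no serious obstacle. The two points that deserve a little attention are identifying $E_{p}$ correctly, so that the right $2\times 3$ block of the second fundamental form is substituted into the determinant of Lemma~\ref{lema-l.a.}, and verifying that $y_{\infty}$ yields no asymptotic direction; both are immediate from $b_{02}>0$ together with the fact that $\Delta_{p}$ is a non-degenerate parabola.
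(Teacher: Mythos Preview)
Your proof is correct and follows essentially the same route as the paper's: identify $E_{p}$ as the $YZ$-plane, parametrise $\Delta_{p}$, and reduce the asymptotic condition to $b_{02}y^{2}-b_{20}=0$, counting roots according to the sign of $b_{20}$. The only cosmetic difference is that the paper invokes the collinearity criterion of Lemma~\ref{lemma-collinear} (computing $\det(\textbf{e},\pi(\eta(y)),\pi(\eta'(y)),\nu_{3})$) whereas you apply Lemma~\ref{lema-l.a.} directly to the coefficient matrix; the two computations are of course equivalent, and your explicit check that $y_{\infty}$ is not asymptotic is a nice touch the paper leaves implicit.
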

\begin{proof}
The curvature parabola $\Delta_{p}$ is a non degenerate parabola that can be parametrised by $\eta(y)=(0,2y,2b_{20}+2b_{11}y+2b_{02}y^{2},2c_{20})$ and $E_{p}$ is the $YZ$-plane. From Lemma \ref{lemma-collinear},  $y\in\mathbb{R}$ is an asymptotic direction iff $\pi(\eta(y))$ and $\pi(\eta'(y))$ are collinear. This condition is equivalent to
$$\det(\textbf{e},\pi(\eta(y)),\pi(\eta'(y)),\nu_{3})=4(b_{02}y^{2}-b_{20})=0,$$
where $T_{p}M=[\textbf{e}]$ and $E_{p}^{\perp}=[\nu_{3}]$
Hence, $y\in\mathbb{R}$ is asymptotic iff $y=\pm\sqrt{b_{20}b_{02}}/b_{02}$. Since $b_{02}>0$, we shall have $0$, $1$ or $2$ asymptotic directions according to whether $b_{20}$ is negative, zero or positive, respectively.
\end{proof}


\subsection{Umbilic curvature}
In the literature, one can find invariants called umbilic curvature. It can be found in articles about regular surfaces in $\mathbb{R}^{5}$ (\cite{Costa/Fuster/Moraes}, for example) and also about singular surfaces in $\mathbb{R}^{3}$ (\cite{MartinsBallesteros,MartinsSaji}). Geometrically speaking, the definition of these invariants are very similar in both cases: they measure, in a sense, the distance between the respective locus of curvature of the surface at the point and the point itself. In our case, the umbilic curvature will perform a resemblant role. However, it will be a second order invariant, meaning that it will depend on the second order information of the surface (in our case, this is provided by the map $g:\tilde{M}\rightarrow\mathbb{R}^{4}$ considered before). In other words, the umbilic curvature is a $\mathcal{R}^{2}\times\mathcal{O}(4)$-invariant.

\begin{definition}\label{def.curvatura}
The non-negative number
$$\kappa_{u}(p)=d(p,\mathcal{A}ff_{p})$$
is called the \emph{umbilic curvature} of $M$ at $p$.
\end{definition}

Let $\{\nu_{1},\nu_{2},\nu_{3}\}$ be an orthonormal frame of $N_{p}M$ such as in Remark \ref{obs-quebra-do-normal}. We can write
 $$II(\textbf{u},\textbf{u})=II_{\nu_{1}}(\textbf{u},\textbf{u})\nu_{1}+II_{\nu_{2}}(\textbf{u},\textbf{u})\nu_{2}+II_{\nu_{3}}(\textbf{u},\textbf{u})\nu_{3},$$
for $\textbf{u}\in C_{q}\subset T_{q}\tilde{M}$. When $\Delta_{p}$ is a non degenerate parabola, $II(\textbf{u},\textbf{u})\in\Delta_{p}\subset\mathcal{A}ff_{p}$. Then $II_{\nu_{3}}(\textbf{u},\textbf{u})$ does not depend on $\textbf{u}$ up to sign, in the sense that $II_{\nu_{3}}(\textbf{u},\textbf{u})$ only tells us about the position of $\Delta_{p}$ in $N_{p}M$ (see Remark \ref{obs-quebra-do-normal}). When $\Delta_{p}$ degenerates, the expression above does not depend on $II_{\nu_{3}}(\textbf{u},\textbf{u})\nu_{3}$, since $\Delta_{p}\subset E_{p}$. First, suppose that $\Delta_{p}$ is not a point. The asymptotic direction $y_{\infty}$ is well defined and the binormal direction $\nu_{\infty}$, called \emph{infinite binormal direction}, is such that $\{\eta(y_{\infty}),\nu_{\infty}\}$ is an orthonormal positively oriented frame of $E_{p}$. If $\Delta_{p}$ is a point which is not $p$, $\eta(y)$ is constant we take the orthonormal positively oriented frame $\{\nu,\eta(y)/|\eta(y)|\}$ of $E_{p}$ with $\nu$ a binormal direction. The frames above are called \emph{adapted frames} of $E_{p}$ (see Figure \ref{frame}). If $\Delta_{p}=\{p\}$ or a non degenerate parabola, then any frame can be considered an adapted frame.

\begin{prop}
Let $\{\nu_{1},\nu_{2},\nu_{3}\}$ be an othonormal frame of $N_{p}M$ satisfying that $\{\nu_{1},\nu_{2}\}$ is an adapted frame of $E_{p}$. Then for $\textbf{u}\in C_{q}$,
 $$\kappa_{u}(p)=\left\{\begin{array}{cl}
    | II_{\nu_{3}}(\textbf{u},\textbf{u})|, & \mbox{if}\ \Delta_{p}\ \mbox{does not degenerate} \\
    |II_{\nu_{2}}(\textbf{u},\textbf{u})|, & \mbox{if}\ \Delta_{p}\ \mbox{ degenerates}
  \end{array}\right.
$$
\end{prop}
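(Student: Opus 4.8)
The plan is to read off $d(p,\mathcal{A}ff_{p})$ directly from the parametrisation (\ref{parabola}) of $\Delta_{p}$ written in the adapted frame $\{\nu_{1},\nu_{2},\nu_{3}\}$ of Remark \ref{obs-quebra-do-normal}. Identify $p$ with the origin of $N_{p}M$, and recall the elementary fact that for an affine subspace $A=v_{0}+V\subset N_{p}M$ (with $V$ its parallel linear subspace) one has $d(p,A)=|w|$, where $w$ is the orthogonal projection onto $V^{\perp}$ of \emph{any} point of $A$. By Definition \ref{curvatureparabola} and (\ref{parabola}) we have $II(\textbf{u},\textbf{u})=\eta(y)=\sum_{i=1}^{3}(l_{\nu_{i}}+2m_{\nu_{i}}y+n_{\nu_{i}}y^{2})\nu_{i}$ for $\textbf{u}=(1,y)\in C_{q}$, and $II_{\nu_{k}}(\textbf{u},\textbf{u})=\langle\eta(y),\nu_{k}\rangle$. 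So the statement reduces to: identifying $V^{\perp}$ among $\nu_{1},\nu_{2},\nu_{3}$ in each shape of $\Delta_{p}$, and checking that the relevant component of $\eta(y)$ is constant in $y$ (equivalently, that its $m$- and $n$-coefficients vanish), so that $|II_{\nu_{k}}(\textbf{u},\textbf{u})|$ is well defined independently of $\textbf{u}\in C_{q}$.

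If $\Delta_{p}$ is a non-degenerate parabola, then $\dim\mathcal{A}ff_{p}=2$ and, by definition of $E_{p}$, the parallel subspace is $E_{p}=[\nu_{1},\nu_{2}]$, so $V^{\perp}=[\nu_{3}]$. Since $\eta(y)\in\mathcal{A}ff_{p}$ for all $y$, the function $y\mapsto\langle\eta(y),\nu_{3}\rangle=l_{\nu_{3}}+2m_{\nu_{3}}y+n_{\nu_{3}}y^{2}$ is constant, hence $m_{\nu_{3}}=n_{\nu_{3}}=0$ and $\langle\eta(y),\nu_{3}\rangle\equiv l_{\nu_{3}}$. Therefore $\kappa_{u}(p)=|l_{\nu_{3}}|=|II_{\nu_{3}}(\textbf{u},\textbf{u})|$ for every $\textbf{u}\in C_{q}$; this also recovers the remark preceding the statement that $II_{\nu_{3}}(\textbf{u},\textbf{u})$ does not depend on $\textbf{u}$.

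When $\Delta_{p}$ degenerates we have $\Delta_{p}\subset E_{p}$, so comparing $\nu_{3}$-components of $\eta(y)$ gives $l_{\nu_{3}}=m_{\nu_{3}}=n_{\nu_{3}}=0$ and everything happens inside $E_{p}=[\nu_{1},\nu_{2}]$. If $\Delta_{p}$ is a line or a half-line, then (by the way $E_{p}$ and the adapted frame are set up) $\mathcal{A}ff_{p}$ is the affine line through $\Delta_{p}$ with direction $[\eta(y_{\infty})]=[\nu_{1}]$, so $V^{\perp}\cap E_{p}=[\nu_{2}]$; as above, $\eta(y)\in\mathcal{A}ff_{p}$ forces $\langle\eta(y),\nu_{2}\rangle=l_{\nu_{2}}+2m_{\nu_{2}}y+n_{\nu_{2}}y^{2}$ to be constant, so $m_{\nu_{2}}=n_{\nu_{2}}=0$ and $\kappa_{u}(p)=|l_{\nu_{2}}|=|II_{\nu_{2}}(\textbf{u},\textbf{u})|$. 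If $\Delta_{p}$ is a point $\nu\neq p$, then $\eta(y)$ is constant, $m=n=0$, the adapted frame has $\nu_{2}=\eta(y)/|\eta(y)|$ and $\nu_{1}\perp\eta(y)$, hence $\eta(y)=l_{\nu_{2}}\nu_{2}$ with $|l_{\nu_{2}}|=|\eta(y)|=d(p,\mathcal{A}ff_{p})$, i.e. again $\kappa_{u}(p)=|II_{\nu_{2}}(\textbf{u},\textbf{u})|$. Finally, if $\Delta_{p}=\{p\}$ then $\eta\equiv0$, both sides vanish, and any frame works.

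The calculations involved are routine. The only point needing care is the bookkeeping of which vector of the adapted frame spans $V^{\perp}$ in each case: it is $\nu_{3}$ for the non-degenerate parabola but $\nu_{2}$ as soon as $\Delta_{p}$ degenerates, and in the point case the roles of $\nu_{1}$ and $\nu_{2}$ inside $E_{p}$ are interchanged relative to the half-line/line case. Once this is settled, the vanishing of the relevant $m$- and $n$-coefficients (hence the $\textbf{u}$-independence of the corresponding $II_{\nu_{k}}(\textbf{u},\textbf{u})$) follows at once from (\ref{parabola}) together with the definition of $E_{p}$ and, if one prefers to argue via the curvature, from the characterisation of degeneracy in Theorem \ref{teo.2jato/parabola}.
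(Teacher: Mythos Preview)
Your argument is correct and follows essentially the same approach as the paper's own proof: in each case you identify the direction orthogonal to the parallel subspace of $\mathcal{A}ff_{p}$ within the adapted frame and observe that the corresponding component of $\eta(y)$ is constant, which is exactly what the paper does, only more tersely. Your version is in fact more explicit about why the relevant $II_{\nu_{k}}(\textbf{u},\textbf{u})$ is independent of $\textbf{u}\in C_{q}$ (via the vanishing of the $m$- and $n$-coefficients), but the underlying idea is the same.
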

\begin{proof}
When $\Delta_{p}$ is a non degenerate parabola, $d(p,\mathcal{A}ff_{p})=d(p,II_{\nu_{3}}(\textbf{u},\textbf{u}))= |II_{\nu_{3}}(\textbf{u},\textbf{u})|$ for $\textbf{u}\in C_{q}$. On the other hand, when $\Delta_{p}$ degenerates, $II_{\nu_{3}}(\textbf{u},\textbf{u})=0$ since $\Delta_{p}\subset E_{p}$. Also, $II(\textbf{u},\textbf{u})$ does not depend on $II_{\nu_{2}}(\textbf{u},\textbf{u})\nu_{2}$, for $\textbf{u}\in C_{q}$, up to sign. Therefore, $d(p,\mathcal{A}ff_{p})=d(p,II_{\nu_{2}}(\textbf{u},\textbf{u}))= |II_{\nu_{2}}(\textbf{u},\textbf{u})|$.
\end{proof}


\begin{figure}[h!]
\begin{center}
\includegraphics[scale=0.45]{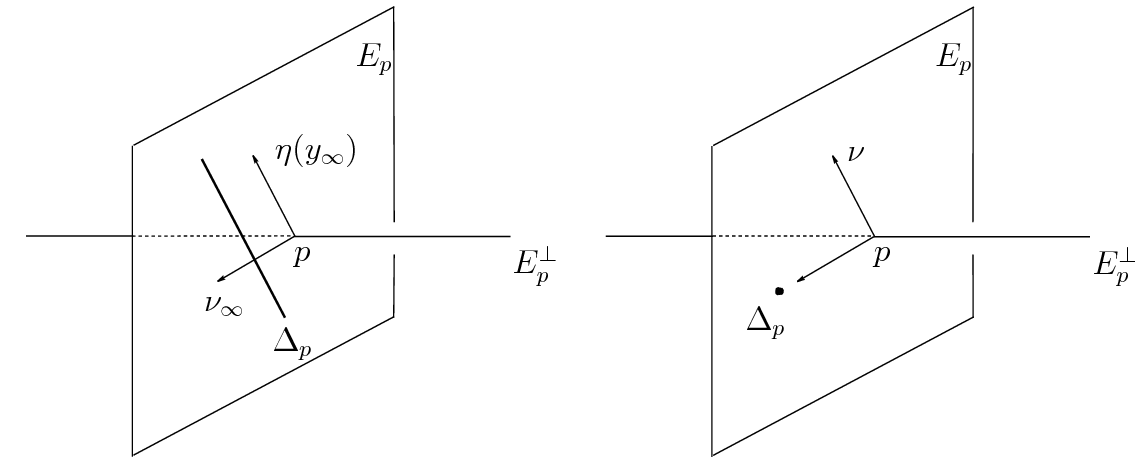}
\caption{Adapted frame of $E_{p}$}
\label{frame}
\end{center}
\end{figure}

\begin{rem}
When $p\in M_{1}$ for every choice of $E_{p}$ we have a different adapted frame. However, neither the choice of $E_{p}$ nor of the adapted frame changes the umbilic curvature. Indeed, suppose that $\Delta_{p}$ is a radial half-line or a radial line. Let $\Gamma_{1},\Gamma_{2}\subset N_{p}M$ be planes through $p$ such that $\Delta_{p}\subset\Gamma_{1},\Gamma_{2}$. Consider the adapted frames $\{\eta(y_{\infty}),\nu_{\infty}^{1}\}$ and $\{\eta(y_{\infty}),\nu_{\infty}^{2}\}$ of $\Gamma_{1}$ and $\Gamma_{2}$, respectively. Since $\Delta_{p}$ is contained in the line given by the direction of $\eta(y_{\infty})$,
$$II_{\nu_{\infty}^{1}}(\textbf{u},\textbf{u})=II_{\nu_{\infty}^{2}}(\textbf{u},\textbf{u})=0,$$
for all $\textbf{u}\in C_{q}$. Besides, from Definition
\ref{def.curvatura} $\kappa_{u}(p)=0$, once $p\in\mathcal{A}ff_{p}$.
In the case when $\Delta_{p}$ is a point other then $p$, the proof
is analogous.
\end{rem}

\begin{coro}
Let $M\subset\mathbb{R}^{4}$ be a corank $1$ surface at $p$ such that $\Delta_{p}$ is a non degenerate parabola and $\{\nu_{1},\nu_{2},\nu_{3}\}$ an orthonormal frame of $N_{p}M$ as in Remark \ref{obs-quebra-do-normal}. Then, $$\kappa_{u}(p)=\frac{|II_{\nu_{3}}(\textbf{u},\textbf{u})|}{I(\textbf{u},\textbf{u})}=|\mbox{proj}_{\nu_{3}}\eta(y)|=|\langle \eta(y),\nu_{3}\rangle|,$$
for any $\textbf{u}\in T_{q}\tilde{M}$.
\end{coro}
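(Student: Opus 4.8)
The plan is to deduce this from the Proposition above (the one asserting $\kappa_{u}(p)=|II_{\nu_{3}}(\textbf{u},\textbf{u})|$ for $\textbf{u}\in C_{q}$ when $\Delta_{p}$ does not degenerate) by a simple homogeneity argument. First I would recall that, since $\Delta_{p}$ is a non degenerate parabola, that Proposition gives $\kappa_{u}(p)=|II_{\nu_{3}}(\textbf{u},\textbf{u})|$ for every unit tangent vector $\textbf{u}\in C_{q}$, with $\{\nu_{1},\nu_{2},\nu_{3}\}$ as in Remark \ref{obs-quebra-do-normal}. Moreover, by the very definition of $C_{q}$ one has $I(\textbf{u},\textbf{u})=1$ for $\textbf{u}\in C_{q}$, so on $C_{q}$ the identity can equally be written $\kappa_{u}(p)=|II_{\nu_{3}}(\textbf{u},\textbf{u})|/I(\textbf{u},\textbf{u})$.

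Next I would extend this from $C_{q}$ to an arbitrary tangent vector. Both $\textbf{u}\mapsto II_{\nu_{3}}(\textbf{u},\textbf{u})$ and $\textbf{u}\mapsto I(\textbf{u},\textbf{u})$ are quadratic forms, hence homogeneous of degree two, so the quotient $II_{\nu_{3}}(\textbf{u},\textbf{u})/I(\textbf{u},\textbf{u})$ is unchanged under $\textbf{u}\mapsto t\textbf{u}$ with $t\neq0$. Since $I$ is a positive semidefinite pseudometric whose kernel is the null direction $\partial_{y}$, every $\textbf{u}\in T_{q}\tilde{M}$ with $I(\textbf{u},\textbf{u})\neq0$ satisfies $I(\textbf{u},\textbf{u})>0$ and, after rescaling by $1/\sqrt{I(\textbf{u},\textbf{u})}$, lies in $C_{q}$; applying the identity of the first paragraph to this rescaled vector yields $\kappa_{u}(p)=|II_{\nu_{3}}(\textbf{u},\textbf{u})|/I(\textbf{u},\textbf{u})$ for every such $\textbf{u}$.

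Finally I would identify this quotient with the $\nu_{3}$-component of $\eta$. Taking $\textbf{u}=(1,y)\in C_{q}$, Definition \ref{curvatureparabola} together with the parametrisation (\ref{parabola}) gives $II(\textbf{u},\textbf{u})=\eta(y)$, so $II_{\nu_{3}}(\textbf{u},\textbf{u})=\langle II(\textbf{u},\textbf{u}),\nu_{3}\rangle=\langle\eta(y),\nu_{3}\rangle$, and since $I(\textbf{u},\textbf{u})=1$ on $C_{q}$ we get $\kappa_{u}(p)=|\langle\eta(y),\nu_{3}\rangle|=|\mathrm{proj}_{\nu_{3}}\eta(y)|$. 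Because $\Delta_{p}$ is a non degenerate parabola it lies in an affine plane parallel to $E_{p}=[\nu_{1},\nu_{2}]$, hence $\langle\eta(y),\nu_{3}\rangle$ is independent of $y$; this is exactly why $\kappa_{u}(p)=d(p,\mathcal{A}ff_{p})$ is well defined and why the three displayed expressions agree for every $\textbf{u}$ with $I(\textbf{u},\textbf{u})\neq0$ (equivalently every $y\in\mathbb{R}$).

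The argument is essentially bookkeeping, and I do not expect a genuine obstacle; the only step requiring a little care is restricting to tangent vectors outside $\ker I$, so that dividing by $I(\textbf{u},\textbf{u})$ is legitimate — this is harmless, since for $\textbf{u}=\partial_{y}$ both sides are anyway undefined when $\Delta_{p}$ is a non degenerate parabola (as $\eta(y_{\infty})$ is not defined in that case).
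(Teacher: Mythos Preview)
Your proof is correct and follows essentially the same approach as the paper's: reduce to $C_{q}$ via the degree-two homogeneity of $II_{\nu_{3}}$ and $I$, then apply the preceding Proposition and identify $II_{\nu_{3}}((1,y),(1,y))$ with $\langle\eta(y),\nu_{3}\rangle$. If anything, you are more explicit than the paper about the need to exclude the null direction $\partial_{y}$ so that the rescaling to $C_{q}$ is legitimate.
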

\begin{proof}
Let $\textbf{u}\in T_{q}\tilde{M}$. The first equality follows from
$$|II_{\nu_{3}}(\textbf{u},\textbf{u})|=|\textbf{u}|^{2}\left|II_{\nu_{3}}\left(\frac{\textbf{u}}{|\textbf{u}|},\frac{\textbf{u}}{|\textbf{u}|}\right)\right|=I(\textbf{u},\textbf{u})\kappa_{u}(p).$$
Since $\Delta_{p}\subset\mathcal{A}ff_{p}$, the last two equations follow from the definition of the umbilic curvature.
\end{proof}

\begin{coro}
When $\Delta_{p}$ is a half-line or a line, $\kappa_{u}(p)$ is the length of the projection of $\Delta_{p}$ on the direction given by an infinite binormal direction, then we have the following formulas:
$$\kappa_{u}(p)=\left|\left\langle\eta(y),\frac{\textbf{e}\times\eta'(y)\times\nu_{3}}{|\eta'(y)|}\right\rangle\right|=\frac{1}{|\eta'(y)|}|\det(\textbf{e},\eta(y),\eta'(y),\nu_{3})|$$
and
$$\kappa_{u}(p)=\frac{|\textbf{e}\times\eta(y)\times\eta'(u)|}{|\eta'(y)|},$$
where $T_{p}M=[\textbf{e}]$, $E_{p}^{\perp}=[\nu_{3}]$ and $\eta'(y)\neq0$ (see Figure \ref{frame2}).
Also, for any $\textbf{u}\in T_{q}\tilde{M}$ and $\{\nu_{1},\nu_{2},\nu_{3}\}$ orthonormal frame of $N_{p}M$ satisfying that $\{\nu_{1},\nu_{2}\}$ is an adapted frame of $E_{p}$, we have:
$$\kappa_{u}(p)=\frac{|II_{\nu_{2}}(\textbf{u},\textbf{u})|}{I(\textbf{u},\textbf{u})}.$$
\begin{figure}[h!]
\begin{center}
\includegraphics[scale=0.35]{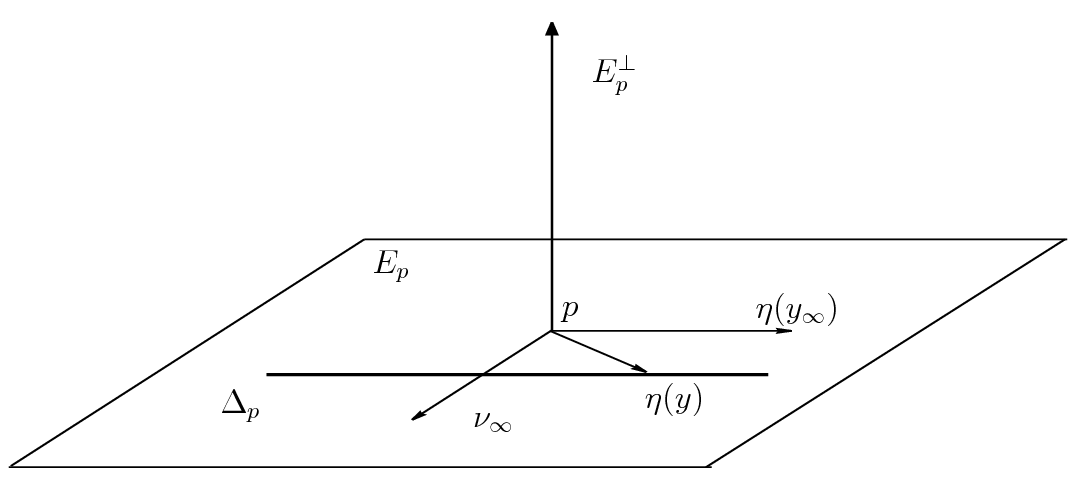}
\caption{}
\label{frame2}
\end{center}
\end{figure}
\end{coro}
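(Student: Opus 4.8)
The plan is to reduce everything to the preceding Proposition, which already gives $\kappa_u(p)=|II_{\nu_2}(\textbf{u},\textbf{u})|$ for unit tangent vectors $\textbf{u}\in C_q$ when $\Delta_p$ degenerates, and then to convert this into the cross-product and determinant forms by elementary linear algebra in $\mathbb{R}^4$. Throughout I fix the orthonormal frame $\{\textbf{e},\nu_1,\nu_2,\nu_3\}$ of $\mathbb{R}^4$ with $T_pM=[\textbf{e}]$, $E_p=[\nu_1,\nu_2]$ and $E_p^\perp=[\nu_3]$, where $\{\nu_1,\nu_2\}$ is the adapted frame; thus $\nu_1=\eta(y_\infty)$ is a unit vector along the direction of $\mathcal{A}ff_{p}$ (so $\eta'(y)=\pm|\eta'(y)|\,\nu_1$ whenever $\eta'(y)\neq 0$) and $\nu_2=\nu_\infty$ is the infinite binormal direction.

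First I would establish the geometric content. For $\textbf{u}=(1,y)\in C_q$ we have $II_{\nu_2}(\textbf{u},\textbf{u})=\langle II(\textbf{u},\textbf{u}),\nu_2\rangle=\langle\eta(y),\nu_2\rangle$, and since $\Delta_p\subset\mathcal{A}ff_{p}$ is a line with direction $\nu_1\perp\nu_2$, this number does not depend on $y$ and its absolute value equals $d(p,\mathcal{A}ff_{p})$. With the preceding Proposition this gives $\kappa_u(p)=|\langle\eta(y),\nu_2\rangle|=|\operatorname{proj}_{\nu_2}\eta(y)|$ for every admissible $y$, i.e. the length of the projection of $\Delta_p$ onto the infinite binormal direction; and the last displayed formula $\kappa_u(p)=|II_{\nu_2}(\textbf{u},\textbf{u})|/I(\textbf{u},\textbf{u})$ for arbitrary $\textbf{u}\in T_q\tilde{M}$ then follows exactly as in the first Corollary of this subsection, writing $\textbf{u}=\sqrt{I(\textbf{u},\textbf{u})}\,\hat{\textbf{u}}$ with $\hat{\textbf{u}}\in C_q$ and using that $II_{\nu_2}$ and $I$ are homogeneous of degree two.

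For the cross-product formula I would use that $\textbf{e},\nu_1,\nu_3$ are mutually orthogonal unit vectors, so the generalized cross product in $\mathbb{R}^4$ satisfies $\textbf{e}\times\nu_1\times\nu_3=\pm\nu_2$; replacing $\nu_1$ by $\eta'(y)/|\eta'(y)|$ and noting that $\textbf{e},\eta'(y),\nu_3$ are also mutually orthogonal with $|\textbf{e}|=|\nu_3|=1$ (whence $|\textbf{e}\times\eta'(y)\times\nu_3|=|\eta'(y)|$) gives $\nu_2=\pm(\textbf{e}\times\eta'(y)\times\nu_3)/|\eta'(y)|$, and substituting into $\kappa_u(p)=|\langle\eta(y),\nu_2\rangle|$ yields the first displayed expression. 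The defining property $\langle a,\,b\times c\times d\rangle=\pm\det(b,c,d,a)$ of the $\mathbb{R}^4$ cross product, together with the fact that row permutations change only the sign of a determinant, turns it into $\frac{1}{|\eta'(y)|}\,|\det(\textbf{e},\eta(y),\eta'(y),\nu_3)|$. For the remaining formula, write $\eta(y)=a\nu_1+b\nu_2$ with $b=\langle\eta(y),\nu_2\rangle$ (legitimate since $\eta(y)\in\mathcal{A}ff_{p}\subset E_p$) and $\eta'(y)=\pm|\eta'(y)|\,\nu_1$; then multilinearity and antisymmetry of the cross product kill the $a\nu_1$ term and leave $\textbf{e}\times\eta(y)\times\eta'(y)=\pm b\,|\eta'(y)|\,(\textbf{e}\times\nu_2\times\nu_1)=\pm b\,|\eta'(y)|\,\nu_3$, so that $|\textbf{e}\times\eta(y)\times\eta'(y)|=|b|\,|\eta'(y)|=\kappa_u(p)\,|\eta'(y)|$, as claimed.

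I do not expect a real obstacle: the work is entirely bookkeeping --- matching $\nu_1,\nu_2$ to their roles in the adapted frame, checking the orthogonality relations behind $|\textbf{e}\times\eta'(y)\times\nu_3|=|\eta'(y)|$, and verifying that every sign ambiguity (orientation of the frame, sign of $\eta'(y)$ for a half-line, row permutations in the determinant) is absorbed by the absolute values in the statement. As a consistency check, when $\Delta_p$ is radial all the listed quantities vanish simultaneously, since $\eta(y)$ and $\eta'(y)$ are then both proportional to $\nu_1$, in agreement with $\kappa_u(p)=0$.
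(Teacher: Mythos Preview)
Your proposal is correct. The paper itself states this corollary without proof, so there is no argument to compare against; your reduction to the preceding Proposition together with the elementary identification of $\nu_{2}$ with $\pm(\textbf{e}\times\eta'(y)\times\nu_{3})/|\eta'(y)|$ and the standard $\langle a,\,b\times c\times d\rangle=\pm\det(a,b,c,d)$ relation is exactly the kind of straightforward verification the authors leave to the reader.
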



The umbilic curvature does not depend on the choice of the adapted frame of $E_{p}$, nor the frame of $N_{p}M$, nor the parametrisation of $\eta$, nor the choice of local coordinates of $\tilde{M}$. However, it depends on the map $g:\tilde{M}\rightarrow\mathbb{R}^{4}$ that parametrises $M$. Indeed, the surface given by $f(x,y)=(x,y^{2},y^{3},x^{2}y)$ has $\kappa_{u}(0)=0$. On the other hand, the same surface given by $\bar{f}(x,y)=(x,(y^{3}+x)^{2},(y^{3}+x)^{3},(y^{3}+x)^{2}y)$ has $\bar{\kappa}_{u}(0)=2$.

\begin{lem}
Let $M\subset\mathbb{R}^{4}$ be a corank $1$ surface at $p\in M$. The following holds:
\begin{enumerate}
    \item[(i)] When $\Delta_{p}$ is a non degenerate parabola, $p\in M_{3}$ iff $\kappa_{u}(p)\neq0$;
    \item[(ii)] When $\Delta_{p}$ is a half-line or a line, $p\in M_{2}$ iff $\kappa_{u}(p)\neq0$;
    \item[(iii)] When $\Delta_{p}$ is a point, $p\in M_{1}$ iff $\kappa_{u}(p)\neq0$.
\end{enumerate}
\end{lem}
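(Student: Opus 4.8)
The plan is to prove the three equivalences simultaneously by exploiting the dictionary between the type of $\Delta_p$ and the rank of the second fundamental form already established in Lemma \ref{lemma/tipo.de.pontos}, together with the normal forms of Lemma \ref{lemma2jato2} and the formulas for $\kappa_u(p)$ obtained in the propositions and corollaries just above. The key observation is that in each of the three cases the curvature parabola already lives inside $\mathcal{A}ff_p$, so $\kappa_u(p)=d(p,\mathcal{A}ff_p)$ is non-zero precisely when $p\notin\mathcal{A}ff_p$, i.e. precisely when $\mathcal{A}ff_p$ (and hence $E_p$, which contains it) fails to be a vector subspace through $p$. By Lemma \ref{lemma/tipo.de.pontos} this is exactly the distinction between the two possible ranks in each case: $\mathcal{A}ff_p=E_p$ versus $\mathcal{A}ff_p\neq E_p$ for a non-degenerate parabola, $\Delta_p$ radial versus non-radial for a half-line or a line, and $\Delta_p=\{p\}$ versus $\Delta_p\neq\{p\}$ for a point.

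First I would treat case (i). Using the normal form $(x,y)\mapsto(x,xy+p,\,b_{20}x^2+b_{11}xy+b_{02}y^2+q,\,c_{20}x^2+r)$ from Lemma \ref{lemma2jato2}(i), the curvature parabola is parametrised (in the $YZW$-hyperplane) by $\eta(y)=(2y,\,2b_{20}+2b_{11}y+2b_{02}y^2,\,2c_{20})$, so $\mathcal{A}ff_p$ is the affine plane $W=2c_{20}$ inside $N_pM$, and $\pi(\eta(y))$ spans $E_p$ as the $YZ$-plane. The corollary above gives $\kappa_u(p)=|\langle\eta(y),\nu_3\rangle|=|2c_{20}|$ with $\nu_3$ the $W$-direction. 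Hence $\kappa_u(p)\neq0$ iff $c_{20}\neq0$ iff $\mathcal{A}ff_p$ does not pass through $p$ iff $\mathcal{A}ff_p\neq E_p$, which by Lemma \ref{lemma/tipo.de.pontos}(i) is exactly $p\in M_3$.

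Next I would do cases (ii) and (iii) by the same template. For a half-line or a line (Lemma \ref{lemma2jato2}(ii) or (iii)), the corollary expresses $\kappa_u(p)$ as $|II_{\nu_2}(\mathbf{u},\mathbf{u})|/I(\mathbf{u},\mathbf{u})$ for an adapted frame $\{\nu_1,\nu_2\}$ of $E_p$, where $\nu_1=\eta(y_\infty)$ points along the line and $\nu_2$ is an infinite binormal; one reads off directly from the normal form that the coefficient along $\nu_2$ is the $x^2$-coefficient $b_{20}$ (after the rotation that made $\nu_2$ orthogonal to the line), so $\kappa_u(p)\neq0$ iff that coefficient is non-zero iff the line/half-line is non-radial iff $p\in M_2$ by Lemma \ref{lemma/tipo.de.pontos}(ii). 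For a point (Lemma \ref{lemma2jato2}(iv)), $\Delta_p=\{b_{20}x^2/\,\text{(unit vector)}\}$ up to sign in the normal form, $\mathcal{A}ff_p=\{b_{20}\cdot(\text{direction})\}$, and $\kappa_u(p)=d(p,\Delta_p)=|b_{20}|$, which is non-zero iff $\Delta_p\neq\{p\}$ iff $p\in M_1$ by Lemma \ref{lemma/tipo.de.pontos}(iii).

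The only mildly delicate point, and the one I would be most careful about, is the bookkeeping in cases (ii) and (iii): one must check that the single coefficient controlling $\kappa_u$ in the adapted frame really is the surviving quadratic coefficient in the normal form of Lemma \ref{lemma2jato2}, i.e. that the rotations used to produce the adapted frame $\{\eta(y_\infty),\nu_\infty\}$ do not mix in a contribution that could vanish for a spurious reason. This is a short linear-algebra verification using that $\eta(y_\infty)$ is collinear with $\eta'(y)$ and lies in $\mathcal{A}ff_p\subset E_p$, so $\nu_\infty$ is the unique (up to sign) direction of $E_p$ orthogonal to the line of $\Delta_p$; projecting $II(\mathbf{u},\mathbf{u})$ onto it kills the whole line and leaves exactly the transverse term, whose coefficient is the one distinguishing the two ranks. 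Everything else reduces to substituting the normal forms of Lemma \ref{lemma2jato2} into the $\kappa_u$-formulas already proved, so I would phrase the whole proof as: "The result follows by combining Lemma \ref{lemma/tipo.de.pontos}, the normal forms of Lemma \ref{lemma2jato2}, and the formulas for $\kappa_u(p)$ above, since in each case $\Delta_p\subset\mathcal{A}ff_p$ forces $\kappa_u(p)=0$ exactly when $p\in\mathcal{A}ff_p$."
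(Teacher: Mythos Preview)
Your proposal is correct and follows essentially the same route as the paper: both invoke Lemma \ref{lemma/tipo.de.pontos} and the normal forms of Lemma \ref{lemma2jato2}, then read off $\kappa_u(p)$ from the surviving $x^2$-coefficient. The paper only writes out case (i) and leaves the rest as ``straightforward calculation'', whereas you spell out all three cases and add the clean geometric summary $\kappa_u(p)\neq 0 \iff p\notin\mathcal{A}ff_p$; up to a harmless factor of $2$ in your stated values of $\kappa_u$ in cases (ii) and (iii), the arguments coincide.
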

\begin{proof}
The proof is a straightforward calculation using Lemma \ref{lemma/tipo.de.pontos}. For completeness, we shall prove the first case using the normal form from Lemma \ref{lemma2jato2}. Taking the local parametrisation $f(x,y)=(x,a_{11}xy+p(x,y),b_{20}x^{2}+b_{11}xy+b_{02}y^{2}+q(x,y),c_{20}x^{2}+r(x,y))$, with $p,q,r\in\mathcal{M}_{2}^{3}$ and $b_{02}\neq0$, the matrix of the second fundamental form is given by
$$
\left(
\begin{array}{ccc}
    0 & a_{11} & 0  \\
    2b_{20} & b_{11} & 2b_{02} \\
    2c_{20} & 0 & 0
\end{array}
\right).
$$
Then, $p\in M_{3}$ iff $c_{20}\neq0$, otherwise $p\in M_{2}$. The plane $\mathcal{A}ff_{p}$ is the one parallel to the plane $YZ$ through $(0,0,0,2c_{20})$. Thus, $\kappa_{u}(p)=2|c_{20}|$ and $p\in M_{3}$ if and only if $\kappa_{u}(p)\neq0$.
\end{proof}


\subsection{Flat geometry} In this section we shall deal with the contact between a corank $1$ surface in $\mathbb{R}^{4}$ and hyperplanes. Such contact is measured by the singularities of the height function defined in the following way: given a local parametrisation of a corank $1$ surface $M$ at $p$ (we shall assume $p$ as the origin), $f:(\mathbb{R}^{2},0)\rightarrow(\mathbb{R}^{4},0)$ the \emph{height function} $h_{v}:(\mathbb{R}^{2},0)\rightarrow(\mathbb{R},0)$ is given by $h_{v}(x,y)=\langle f(x,y),v\rangle$, $v\in\mathbb{S}^{3}$.

\begin{lem}\label{lemma-equiv}
Let $M\subset\mathbb{R}^{4}$ be a corank $1$ surface at $p\in M$ and a nontrivial vector $\nu\in N_{p}M$. The quadratic forms $II_{\nu}(p)$ and $\mathcal{H}(h_{\nu})(p)$ are equivalent (in the sense that we may find coordinate systems in which they coincide), where $\mathcal{H}(h_{\nu})(p)$ denotes the Hessian matrix of $h_{\nu}$ at $p$.
\end{lem}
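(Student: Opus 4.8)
The plan is to compute both objects in a suitable local parametrisation and observe that their matrices are literally the same.

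First I would fix a local parametrisation $f=g\circ\phi^{-1}\colon(\mathbb{R}^2,0)\to(\mathbb{R}^4,0)$ of $M$ at $p$; by the normalisation preceding Lemma~\ref{lema2jato} (or by Lemma~\ref{lemma2jato2}) we may assume $f(x,y)=(x,f_2(x,y),f_3(x,y),f_4(x,y))$ with all first order partial derivatives of $f_2,f_3,f_4$ vanishing at the origin, so that $f_x(0)=\textbf{e}_1$ spans $T_pM$ and $f_y(0)=0$. For $\nu\in N_pM$ we then have $\partial_x h_\nu(0)=\langle f_x(0),\nu\rangle=\langle\textbf{e}_1,\nu\rangle=0$ and $\partial_y h_\nu(0)=\langle f_y(0),\nu\rangle=0$, so the origin is a critical point of $h_\nu$. (This in fact holds for any parametrisation, since $\mathrm{Im}\,df_0=T_pM$ is orthogonal to $\nu$.) Consequently $\mathcal{H}(h_\nu)(p)$ is a genuine quadratic form on $T_0\mathbb{R}^2\cong T_q\tilde{M}$, transforming under a change of source coordinates by $J\mapsto J^{t}(\,\cdot\,)J$ — which, by the proof of Lemma~\ref{lemma-2ff}, is exactly the transformation rule for the matrix of $II_\nu$.

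Differentiating once more gives $\partial_{xx}h_\nu(0)=\langle f_{xx}(0),\nu\rangle$, $\partial_{xy}h_\nu(0)=\langle f_{xy}(0),\nu\rangle$ and $\partial_{yy}h_\nu(0)=\langle f_{yy}(0),\nu\rangle$. Since the projection $\perp\colon T_p\mathbb{R}^4\to N_pM$ is orthogonal, writing $f_{xx}(0)=f_{xx}^{T}(0)+f_{xx}^{\perp}(0)$ with $f_{xx}^{T}(0)\in T_pM$ we get $\langle f_{xx}(0),\nu\rangle=\langle f_{xx}^{\perp}(0),\nu\rangle=l_\nu(q)$, and similarly $\langle f_{xy}(0),\nu\rangle=m_\nu(q)$ and $\langle f_{yy}(0),\nu\rangle=n_\nu(q)$. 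Hence
$$\mathcal{H}(h_\nu)(p)=\begin{pmatrix} l_\nu(q) & m_\nu(q)\\ m_\nu(q) & n_\nu(q)\end{pmatrix},$$
which is precisely the matrix of the quadratic form $II_\nu(p)$ in the basis $\{\partial_x,\partial_y\}$ of $T_q\tilde{M}$. Thus in these coordinates the two forms coincide, which proves the lemma.

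There is no real obstacle here; the two points meriting attention are that a corank~$1$ singular point of $M$ is automatically a critical point of $h_\nu$ for every $\nu\in N_pM$ (so that the Hessian is a coordinate-consistent quadratic form, not merely a matrix depending on the chart), and the elementary fact that pairing with a normal vector $\nu\in N_pM$ is unaffected by the orthogonal projection onto $N_pM$ — which is exactly what converts the ordinary second derivatives of $h_\nu$ into the coefficients $l_\nu,m_\nu,n_\nu$ of the second fundamental form along $\nu$.
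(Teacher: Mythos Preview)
Your proof is correct and follows essentially the same approach as the paper: choose the normalised parametrisation $f(x,y)=(x,f_2,f_3,f_4)$, compute both matrices in the basis $\{\partial_x,\partial_y\}$, and observe they coincide entry by entry. The only difference is cosmetic---the paper expands the coefficients $a_{ij},b_{ij},c_{ij}$ explicitly, whereas you invoke the orthogonal projection to identify $\langle f_{xx},\nu\rangle$ with $l_\nu$ directly; your added remark that the origin is a critical point (so the Hessian is a genuine quadratic form) is a worthwhile clarification that the paper omits.
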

\begin{proof}
Let $f:(\mathbb{R}^{2},0)\rightarrow(\mathbb{R}^{4},0)$ be a local parametrisation of $M$ at $p$, given by
$f(x,y)=(x,f_{2}(x,y),f_{3}(x,y),f_{4}(x,y))$ with $f_{i}\in \mathcal{M}_{2}^{2}$, $i=2,3,4$. We may write
$$f_{2}(x,y)=\sum_{i+j=2}a_{ij}x^{i}y^{j},\ f_{3}(x,y)=\sum_{i+j=2}b_{ij}x^{i}y^{j}\ \mbox{and}\ f_{4}(x,y)=\sum_{i+j=2}c_{ij}x^{i}y^{j}.
$$
Also, let $\nu=(0,v_{2},v_{3},v_{4})\in N_{p}M$. Thus, $h_{\nu}(x,y)=xv_{1}+f_{2}(x,y)v_{2}+f_{3}(x,y)v_{3}+f_{4}(x,y)v_{4}$ and
$$\mathcal{H}(h_{\nu})(p)=\left(\begin{array}{cc}
2(a_{20}v_{2}+b_{20}v_{3}+c_{20}v_{4}) & a_{11}v_{2}+b_{11}v_{3}+c_{11}v_{4}\\
a_{11}v_{2}+b_{11}v_{3}+c_{11}v_{4} & 2(a_{02}v_{2}+b_{02}v_{3}+c_{02}v_{4}).
\end{array}\right)
$$
On the other hand, $II_{\nu}(\textbf{u},\textbf{u})=x^{2}l_{\nu}+2xym_{\nu}+y^{2}n_{\nu}$, for $\textbf{u}=(x,y)\in T_{q}\tilde{M}$, where
$$ \begin{array}{c}
l_{\nu}=\langle f_{xx},\nu\rangle=2(a_{20}v_{2}+b_{20}v_{3}+c_{20}v_{4}),\  m_{\nu}=\langle f_{xy},\nu\rangle=a_{11}v_{2}+b_{11}v_{3}+c_{11}v_{4},\\
n_{\nu}=\langle f_{yy},\nu\rangle=2(a_{02}v_{2}+b_{02}v_{3}+c_{02}v_{4}).
\end{array}$$
Hence, the matrix of $II_{\nu}(p)$ is
$$
II_{\nu}(p)=\left(\begin{array}{cc}
2(a_{20}v_{2}+b_{20}v_{3}+c_{20}v_{4}) & a_{11}v_{2}+b_{11}v_{3}+c_{11}v_{4}\\
a_{11}v_{2}+b_{11}v_{3}+c_{11}v_{4} & 2(a_{02}v_{2}+b_{02}v_{3}+c_{02}v_{4}).
\end{array}\right)=\mathcal{H}(h_{\nu})(p).
$$
\end{proof}


\begin{teo}\label{teo-degenerada}
Let $M\subset\mathbb{R}^{4}$ be a corank $1$ surface at $p\in M$. The height function $h_{\nu}$ is singular at $p$ if and only if $\nu\in N_{p}M$. Furthermore,
\begin{itemize}
\item[(i)] Assume $\Delta_{p}$ is not a point. Then, $h_{\nu}$ has a degenerate singularity at $p$ iff $\nu$ is a degenerate direction. When $\Delta_{p}$ is a non-degenerate parabola, $h_{\nu}$ has a corank $2$ singularity at $p$ iff $\nu\in E_{p}^{\perp}$ and $\kappa_{u}(p)=0$. When $\Delta_{p}$ is a line or a half-line, there are two possibilities. If $\kappa_{u}(p)\neq0$, the only direction such that $h_{\nu}$ has a corank $2$ singularity at $p$ is given by $E_{p}^{\perp}$. However, if $\kappa_{u}(p)=0$, $h_{\nu}$ has a corank $2$ singularity at $p$ for all directions on $\mathcal{A}ff_{p}^{\perp}$;
\item[(ii)] Assume $\Delta_{p}$ is a point. Then $h_{\nu}$ has a degenerate singularity at $p$ for all $\nu\in N_{p}M$. Also, if $\kappa_{u}(p)\neq0$, $h_{\nu}$ has a corank $2$ singularity at $p$ for all directions on the plane orthogonal to he line through $p$ containing $\Delta_{p}$. If $\kappa_{u}(p)=0$ the singularity is of corank $2$ for all $\nu\in N_{p}M$.
\end{itemize}
\end{teo}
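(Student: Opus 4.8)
The plan is to exploit Lemma~\ref{lemma-equiv}, which tells us that the Hessian of $h_\nu$ at $p$ coincides, up to coordinate change, with the matrix of the quadratic form $II_\nu(p)$. The first claim, that $h_\nu$ is singular at $p$ iff $\nu \in N_pM$, is immediate: choosing the parametrisation $f(x,y)=(x,f_2,f_3,f_4)$ with $f_i \in \mathcal{M}_2^2$, the gradient of $h_v$ at the origin is $(v_1,0,0,0)$, which vanishes precisely when $v_1=0$, i.e.\ when $v \in N_pM$. So from now on write $\nu=(0,v_2,v_3,v_4)$ and work with the $2\times 2$ matrix $\mathcal H(h_\nu)(p)=II_\nu(p)$ computed in that lemma. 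The singularity of $h_\nu$ at $p$ is degenerate iff $\det II_\nu(p)=0$, and of corank $2$ iff $II_\nu(p)$ is the zero matrix; the whole theorem is the translation of these two algebraic conditions into statements about $\Delta_p$ using the normal forms of Lemma~\ref{lemma2jato2}.

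The main body of the proof is a case analysis according to the type of $\Delta_p$, handled through the corresponding normal form. When $\Delta_p$ is a non-degenerate parabola we use form (i) of Lemma~\ref{lemma2jato2}, so the matrix of $II$ (with respect to $\{\nu_1,\nu_2,\nu_3\}$ as in Remark~\ref{obs-quebra-do-normal}, with $E_p=[\nu_1,\nu_2]$) has rows $(0,a_{11},0)$, $(2b_{20},b_{11},2b_{02})$, $(2c_{20},0,0)$; for $\nu=v_2\nu_1+v_3\nu_2+v_4\nu_3$ one has $II_\nu(p)$ with entries built from these rows, and $\det II_\nu(p)=0$ recovers exactly the degenerate-direction cone. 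For the corank $2$ part, $II_\nu(p)=0$ forces $v_2 a_{11}=0$, $v_4 c_{20}=0$ and the $(2,2)$ and mixed entries to vanish; since $a_{11}\neq 0$ and $b_{02}\neq 0$ for a non-degenerate parabola, this pins $\nu$ down to the $\nu_3$-direction and additionally requires $c_{20}=0$, which by the last Lemma before this subsection is equivalent to $\kappa_u(p)=0$. The line and half-line cases use forms (ii) and (iii): here $E_p$ is two-dimensional containing $\mathcal{A}ff_p$, and $\mathcal{A}ff_p^\perp$ is either the line $E_p^\perp$ (when $\kappa_u(p)\neq 0$, i.e.\ $\Delta_p$ is not radial) or the whole plane $E_p^\perp \oplus [\eta(y_\infty)$-complement$]$ (when $\kappa_u(p)=0$); again one reads off which $\nu$ kill $II_\nu(p)$ from the explicit second fundamental form and matches them to $\mathcal{A}ff_p^\perp$. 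Finally, when $\Delta_p$ is a point we use form (iv), where the first component after $x$ already lies in $\mathcal{M}_2^3$, so $II_{\nu_1}(p)=0$ identically; every $\nu\in N_pM$ is then a degenerate direction, and $II_\nu(p)=0$ reduces to a condition on the single surviving coefficient $c_{20}$, giving corank $2$ on the plane orthogonal to the $p$--$\Delta_p$ line when $\kappa_u(p)\neq0$ and on all of $N_pM$ when $\kappa_u(p)=0$.

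The step I expect to be the main obstacle is bookkeeping rather than conceptual: in each case one must be careful that the adapted orthonormal frame $\{\nu_1,\nu_2,\nu_3\}$ is genuinely aligned with $E_p$ and $\mathcal{A}ff_p$ as prescribed in Remark~\ref{obs-quebra-do-normal} and the definition of $\mathcal{A}ff_p$, and that ``degenerate direction'' (all of $N_pM$ with $\det II_\nu=0$) is correctly distinguished from ``binormal direction'' (only those in $E_p$). The subtlety is that $\kappa_u(p)=d(p,\mathcal{A}ff_p)$ governs whether $\mathcal{A}ff_p$ passes through $p$, hence whether $\mathcal{A}ff_p^\perp$ has the expected dimension; one should invoke the last Lemma of the umbilic-curvature subsection to convert $\kappa_u(p)=0$ into the vanishing of the relevant coefficient ($c_{20}$, in the normal forms above) and thereby get the corank-$2$ locus to jump dimension exactly as stated. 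Once the frames are fixed correctly in each of the four cases, every assertion follows from inspecting the explicit $2\times 2$ Hessian supplied by Lemma~\ref{lemma-equiv}.
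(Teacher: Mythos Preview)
Your proposal is correct and follows essentially the same route as the paper: a case-by-case analysis through the normal forms of Lemma~\ref{lemma2jato2}, computing the Hessian explicitly and reading off the degenerate and corank~$2$ conditions. Your explicit invocation of Lemma~\ref{lemma-equiv} at the outset is a mild streamlining over the paper, which re-derives the equality $\mathcal{H}(h_\nu)(p)=II_\nu(p)$ inside the proof; note only that in normal form~(i) the coefficient you call $a_{11}$ is already normalised to $1$, and in form~(iv) the surviving coefficient is $b_{20}$ rather than $c_{20}$.
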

\begin{proof}
It is obvious that $h_{\nu}$ is singular iff $\nu\in N_{p}M$. When $\Delta_{p}$ is a non degenerate parabola, taking the parametrisation given in Lemma \ref{lemma2jato2},
the second fundamental form in the direction $\nu=(0,v_{2},v_{3},v_{4})\in N_{p}M$ is given by
$$
II_{\nu}(\textbf{u},\textbf{v})=\langle (0,\alpha_{1}\beta_{2}+\alpha_{2}\beta_{1},2b_{20}\alpha_{1}\alpha_{2}+b_{11}(\alpha_{1}\beta_{2}+\alpha_{2}\beta_{1})+2b_{02}\beta_{1}\beta_{2},2c_{20}\alpha_{1}\alpha_{2}),\nu\rangle,
$$
where $\textbf{u}=(\alpha_{1},\beta_{1}),\textbf{v}=(\alpha_{2},\beta_{2})\in T_{q}\tilde{M}$. Hence, $\nu$ is a degenerate direction if and only if $II_{\nu}(\textbf{u},\textbf{v})=0$ for all $\textbf{v}\in T_{q}\tilde{M}$, which is equivalent to $v_{2}=-b_{11}v_{3}\pm2\sqrt{b_{20}b_{02}v_{3}^{2}+b_{02}c_{20}v_{3}v_{4}}$.
On the other hand, the height function is given by
$$
h_{\nu}(x,y)=(xy+p(x,y))v_{2}+(b_{20}x^{2}+b_{11}xy+b_{02}y^{2}+q(x,y))v_{3}+(c_{20}x^{2}+q(x,y))v_{4},
$$
with $b_{02}>0$ and $p,q,r\in\mathcal{M}_{2}^{3}$ and
the Hessian matrix of $h_{\nu}$ at $p$ is
$$\mathcal{H}(h_{\nu})(p)=\left(\begin{array}{cc}
2(b_{20}v_{3}+c_{20}v_{4}) & v_{2}+b_{11}v_{3}\\
v_{2}+b_{11}v_{3} & 2b_{02}v_{3}.
\end{array}\right).
$$
Therefore, $h_{\nu}$ has a degenerate singularity (non Morse singularity) at $p$ if and only if $\det(\mathcal{H}(h_{\nu})(p))=0$, and it is equivalent to $v_{2}=-b_{11}v_{3}\pm2\sqrt{b_{20}b_{02}v_{3}^{2}+b_{02}c_{20}v_{3}v_{4}}$.
The Hessian matrix vanishes iff $\nu\in E_{p}^{\perp}$ and $\kappa_{u}(p)=0$, since $E_{p}^{\perp}=[v_{4}]$ and $\kappa_{u}(p)=2|c_{20}|$.
When $\Delta_{p}$ is a half-line or a line, the proof is analogous.
Finally, when $\Delta_{p}$ is a point, $\Delta_{p}$ can be parametrised by $\eta(y)=(0,0,2b_{20},0)$, $\kappa_{u}(p)=2|b_{20}|$ and the height function is given by $h_{\nu}(x,y)=p(x,y)v_{2}+(b_{20}x^{2}+q(x,y))v_{3}+r(x,y)v_{4}.$ It degenerates for all $\nu\in N_{p}M$ since
$$\mathcal{H}(h_{\nu})(p)=\left(
                                             \begin{array}{cc}
                                               2b_{20}v_{3} & 0 \\
                                               0 & 0 \\
                                             \end{array}
                                           \right).
$$
Furthermore, $h_{\nu}$ has a corank $2$ singularity iff $\kappa_{u}(p)v_{3}=0$. Therefore, if $\kappa_{u}(p)\neq0$, we have $v_{3}=0$ and for all $\nu$ orthogonal to $(0,0,2b_{20},0)$, that is, $\nu$ orthogonal to the plane through $p$ containing $\Delta_{p}$, $h_{\nu}$ has a corank $2$ singularity. If $\kappa_{u}(p)=0$, $h_{\nu}$ has a corank $2$ singularity at $p$ for all $\nu\in N_{p}M$.
\end{proof}

It follows from Theorem \ref{teo-degenerada} that the subset of degenerate directions $\nu\in N_{p}M$ is exactly the subset of degenerate directions of the height function (a cone, denoted by $\mathcal{C}_{p}\subset N_{p}M$ that may degenerate). Besides, the binormal directions are obtained by the intersection of the cone of degenerate directions, $\mathcal{C}_{p}$, with $E_{p}$. Here lies the difference between the notion of a binormal direction for regular surfaces in $\mathbb{R}^{5}$ and our case. In the first one, a vector $\nu$ is a binormal direction iff the height function $h_{\nu}$ has a singularity more degenerate than $A_{3}$. For our case, however, in order to be a binormal direction, the vector $\nu$ must lie in the intersection of $E_{p}$ and the cone of degenerate directions of the height function $h_{\nu}$. Therefore, our definition is related to the second order information of the surface.

In \cite{Benedini/Sinha} the first two authors showed that the number of asymptotic and binormal directions of a regular surface in $\mathbb{R}^{4}$ and a corank $1$ surface in $\mathbb{R}^{3}$ given by the projection of the first one in a tangent direction coincide. Nevertheless, this coincidence does not happen here. 

\begin{coro}
Let $\textbf{u}\in T_{q}\tilde{M}$ be an asymptotic direction given by a parameter $y\in\mathbb{R}\cup[y_{\infty}]$. Then, the cone whose basis is the curvature parabola is perpendicular to the cone $\mathcal{C}_{p}$ of degenerate directions at $p$.
\end{coro}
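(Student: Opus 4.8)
The plan is to read the statement generator-by-generator and reduce it to the identity $\langle II(\mathbf{u},\mathbf{u}),\nu\rangle=II_\nu(\mathbf{u},\mathbf{u})$ together with the definition of a binormal direction, so that essentially no new computation is needed beyond what is already in Lemmas \ref{lema-l.a.}, \ref{lemma-collinear} and \ref{lemma-equiv} and Theorem \ref{teo-degenerada}. First I would fix the adapted parametrisation of Lemma \ref{lemma2jato2}, in which $E(q)=1$, $F(q)=G(q)=0$, the unit tangent set $C_q$ consists of the vectors $\mathbf{u}=(1,y)$ (and $(0,1)$ for $y=y_\infty$), the curvature parabola is parametrised by $\eta(y)$ as in (\ref{parabola}), and $II(\mathbf{u},\mathbf{u})=\eta(y)$ whenever $\mathbf{u}=(1,y)\in C_q$. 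Then the cone over $\Delta_p$ is $\mathcal{D}_p=\{t\,\eta(y):t\ge 0\}\subset N_pM$, its generators being the rays $\mathbb{R}_{\ge 0}\eta(y)$, while $\mathcal{C}_p\subset N_pM$ is the cone of degenerate directions of the height function provided by Theorem \ref{teo-degenerada}; by the remarks following that theorem a binormal direction is precisely a generator of $\mathcal{C}_p$ lying in $E_p$.

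Next, let $y$ be the asymptotic direction of the statement and $\mathbf{u}\in C_q$ the corresponding unit tangent vector. By definition there is a binormal direction $\nu\in E_p$ with $II_\nu(\mathbf{u},\mathbf{v})=\langle II(\mathbf{u},\mathbf{v}),\nu\rangle=0$ for all $\mathbf{v}\in T_q\tilde M$; by Lemma \ref{lemma-equiv} this makes $\mathbf{u}$ a flat contact direction of $\nu$, so $\mathbb{R}_{\ge 0}\nu$ is a generator of $\mathcal{C}_p$. Taking $\mathbf{v}=\mathbf{u}$ gives $\langle\eta(y),\nu\rangle=II_\nu(\mathbf{u},\mathbf{u})=0$, so the generator of $\mathcal{D}_p$ through $\eta(y)$ is orthogonal to this generator of $\mathcal{C}_p$. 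Taking instead $\mathbf{v}=\partial_y=(0,1)$ and using $\eta'(y)=\frac{d}{dy}II((1,y),(1,y))=2\,II((1,y),(0,1))$ gives $\langle\eta'(y),\nu\rangle=2\,II_\nu(\mathbf{u},\partial_y)=0$. Since the tangent plane of $\mathcal{D}_p$ along the generator through $\eta(y)$ is spanned by the radial vector $\eta(y)$ and the tangent vector $\eta'(y)$ of $\Delta_p$, the vector $\nu$ is orthogonal to that whole tangent plane, i.e. $\mathbb{R}\nu$ is exactly the normal line of $\mathcal{D}_p$ along the chosen generator — the precise sense in which the two cones are perpendicular. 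Carrying this out over every asymptotic parameter $y$, equivalently over every binormal direction, yields the assertion.

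For the degenerate shapes of $\Delta_p$ I would use the conventions fixed before Lemma \ref{lemma-collinear}: for $y=y_\infty$ take $\mathbf{u}=(0,1)$ together with the prescribed values of $\eta(y_\infty),\eta'(y_\infty)$ when $\Delta_p$ is a line or a half-line, and the adapted frame of $E_p$ when $\Delta_p$ is a point, reading the two orthogonality identities off in the same way. The only genuine difficulty I anticipate is bookkeeping rather than mathematics: when $\Delta_p$ degenerates, $\mathcal{D}_p$ and $\mathcal{C}_p$ may collapse (to a half-plane, a line, a point, or to all of $N_pM$), so one must state carefully what "perpendicular generator-to-generator" means in each of these cases and keep the normalisation $\mathbf{u}\in C_q$ throughout, so that $II(\mathbf{u},\mathbf{u})$ lands on $\Delta_p$ itself and not on a rescaled copy of it; once that is settled the orthogonality is immediate from the two displayed identities and Lemma \ref{lema-l.a.}.
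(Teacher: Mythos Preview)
Your approach is essentially the same as the paper's: take the defining identity $II_\nu(\mathbf{u},\mathbf{v})=0$ for the binormal $\nu$ associated to the asymptotic direction and specialise $\mathbf{v}=\mathbf{u}$ to obtain $\langle\eta(y),\nu\rangle=0$, i.e.\ the generator of the cone over $\Delta_p$ through $\eta(y)$ is orthogonal to the generator $\mathbb{R}\nu$ of $\mathcal{C}_p$. The paper stops there; that single orthogonality is all it means by ``perpendicular''.

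Your extra step --- specialising $\mathbf{v}=\partial_y$ to get $\langle\eta'(y),\nu\rangle=0$ and hence that $\nu$ is orthogonal to the full tangent plane of $\mathcal{D}_p$ along that generator --- is correct and yields a strictly stronger statement than the paper actually proves. It is a nice sharpening but not required; likewise your careful case analysis for the degenerate $\Delta_p$ goes beyond what the paper spells out.
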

\begin{proof}
The tangent direction $\textbf{u}\in T_{q}\tilde{M}$ is an asymptotic direction given by the parameter $y\in\mathbb{R}\cup[y_{\infty}]$ if and only if, there is a binormal direction $\nu\in\mathcal{C}_{p}\cap E_{p}$ such that for all tangent vectors $\textbf{v}\in T_{q}\tilde{M}$,
$$II_{\nu}(\textbf{u},\textbf{v})=\langle II(\textbf{u},\textbf{v}),\nu\rangle=0.$$
In particular, for $\textbf{v}=\textbf{u}$, this means that
$\langle\eta(y),\nu\rangle=0$. In other words, the line given by
$\eta(y)$ and $p$ (contained in the cone whose basis is
$\Delta_{p}$) is perpendicular to $\mathcal{C}_{p}$.
\end{proof}

\begin{coro}
Let  $M\subset\mathbb{R}^{4}$ be a surface as in Theorem \ref{teo-degenerada}. A hyperplane has a degenerate contact with $M$ at $p$ if and only if, it is orthogonal to a degenerate direction $\nu\in N_{p}M$. In particular, if $\nu$ is a binormal direction, the hyperplane is an oculating hyperplane.
\end{coro}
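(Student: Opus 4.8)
The plan is to reduce the statement entirely to Theorem~\ref{teo-degenerada}, Lemma~\ref{lemma-equiv} and the definitions of degenerate, binormal and osculating, since the contact of a hyperplane with $M$ at $p$ is by definition encoded in the singularity type of the associated height function.

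First I would fix the setup. A hyperplane through $p$ is of the form $H_{\nu}=\{x\in\mathbb{R}^{4}:\langle x-p,\nu\rangle=0\}$ for some unit vector $\nu\in\mathbb{S}^{3}$, and the contact of $M$ with $H_{\nu}$ at $p$ is measured by the local singularity of the height function $h_{\nu}$ at $p$. Following the terminology already used in the paper, $H_{\nu}$ has \emph{degenerate contact} with $M$ at $p$ precisely when $h_{\nu}$ has a degenerate (non Morse) singularity at $p$; in particular this requires $h_{\nu}$ to be singular at $p$.

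Next I would invoke Theorem~\ref{teo-degenerada}: $h_{\nu}$ is singular at $p$ if and only if $\nu\in N_{p}M$, and, running through the three possibilities for $\Delta_{p}$ exactly as in that theorem together with the paragraph following it which identifies the set of degenerate directions with the degenerate directions of the height function (the cone $\mathcal{C}_{p}\subset N_{p}M$), $h_{\nu}$ has a degenerate singularity at $p$ if and only if $\nu$ is a degenerate direction. Equivalently, one may cite Lemma~\ref{lemma-equiv}: since $\mathcal{H}(h_{\nu})(p)$ and $II_{\nu}(p)$ coincide in suitable coordinates, $h_{\nu}$ has a degenerate Hessian at $p$ iff $II_{\nu}$ is a degenerate quadratic form, which is the definition of $\nu$ being a degenerate direction. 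Reading this chain of equivalences in both directions proves the first claim: $H_{\nu}$ has degenerate contact with $M$ at $p$ if and only if $\nu\in N_{p}M$ is a degenerate direction.

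Finally, for the \textbf{in particular} clause: by definition a binormal direction is a degenerate direction lying in $E_{p}\subset N_{p}M$, so if $\nu$ is binormal then $\nu$ is in particular a degenerate direction in $N_{p}M$ and the first part gives that $H_{\nu}$ has degenerate contact with $M$ at $p$; moreover, by the Definition of osculating hyperplane, the hyperplane through $p$ orthogonal to a binormal direction is exactly what is called an osculating hyperplane, so $H_{\nu}$ is osculating. The argument uses essentially no computation beyond what is already contained in Theorem~\ref{teo-degenerada}; the only point requiring a little care is phrasing the equivalence uniformly across all degeneracy types of $\Delta_{p}$ — which is precisely why the case analysis of Theorem~\ref{teo-degenerada} is needed rather than a one-line argument — so I do not expect a genuine obstacle here.
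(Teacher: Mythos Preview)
Your proposal is correct and matches the paper's intent: the paper states this corollary without proof, treating it as an immediate consequence of Theorem~\ref{teo-degenerada} (together with the identification, noted just after that theorem, of the cone $\mathcal{C}_{p}$ of degenerate directions with the degenerate directions of the height function) and of the definition of osculating hyperplane. Your extra appeal to Lemma~\ref{lemma-equiv} is a clean way to phrase the equivalence uniformly, but it is not needed beyond what Theorem~\ref{teo-degenerada} already provides.
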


\end{document}